\newcommand{\vv}{{\mathbb V}}
\newcommand{\con}{\operatorname{con}}
\newcommand{\mul}{\operatorname{mul}}
\newcommand{\simp}{\operatorname{sim}}
\newcommand{\var}{\operatorname{var}}
\newtheorem{theorem}{Theorem}[section]
\newtheorem{ex}[theorem]{Example}
\newtheorem{cor}[theorem]{Corollary}
\newtheorem{sufcon}{Sufficient Condition}
\newtheorem{fact}[theorem]{Fact}
\newtheorem{lemma}[theorem]{Lemma}
\newtheorem{definition}[theorem]{Definition}
\newtheorem{remark}[theorem]{Remark}
\newtheorem{prop}[theorem]{Proposition}
\newtheorem{obs}[theorem]{Observation}
\begin{document}

\date{}

\title{Limit varieties of $J$-trivial monoids}
\author{Olga  B. Sapir}

\maketitle

\begin{abstract} We show that limit varieties of monoids recently discovered by Gusev, Zhang and Luo and their subvarieties
are generated by monoids of the form $M_\tau(W)$ for certain congruences $\tau$ on the free monoid.
The construction $M_\tau(W)$ is a generalization of widely used Dilworth-Perkins construction.
Using this construction, we find explicit generators for  Gusev limit varieties and
 give a short reproof to the fact that  Zhang-Luo limit variety is non-finitely based.
\end{abstract}

\section{Introduction}

 In this article  we consider  monoids and regard them as semigroups equipped with an additional $0$-ary operation that fixes the identity element.  Elements of a countably infinite alphabet $\mathfrak A$ are called {\em letters} and elements of the free monoid $\mathfrak A^*$  are called {\em words}. We use $1$ to denote the empty word, which is the identity element of $\mathfrak A^*$.
An algebra (variety of algebras)  is said to be {\em finitely based} (FB) if there is a finite subset of its identities from which all of its identities may be deduced. Otherwise, it is said to be {\em non-finitely based} (NFB).

In 1969, Perkins \cite{P} found the first sufficient condition under which a semigroup is NFB. By using this condition, he constructed the first two examples of finite NFB semigroups.  The first example was the 6-element Brandt monoid and the second example was the 25-element monoid obtained from the set of words $W = \{abtba, atbab, abab, aat\}$ by using the following construction.

Let $\le$ be the relation on  the free monoid ${\mathfrak A}^*$ given by ${\bf v} \le {\bf u}$ if and only if $\bf v$ is a subword of $\bf u$.
 Given a set of words  $W \subseteq {\mathfrak A}^*$,   $M(W)$ denotes the Rees quotient  of  $\mathfrak A^*$  over the ideal $\mathfrak A^* \setminus  W^{\le}$,  where $W^\le$ is the closure of $W$ under taking subwords.
Then
$$M(W) = \langle W^{\le} \cup \{0\}, \star \rangle,$$ where multiplication $\star$ is given by

$\bullet$ ${\mathbf u} \star {\mathbf v} = \mathbf{uv}$ if   $\mathbf {uv} \in W^{\le}$, otherwise, $\mathbf {u} \star \mathbf {v} = 0$.

 $M(W)$ ($M(W) \setminus \{1\}$) is a $J$-trivial monoid (semigroup) because the free monoid  $\mathfrak A^*$ is $J$-trivial, that is,
${\bf u} ={\bf v}$ whenever ${\bf u} \le {\bf v}$ and ${\bf v} \le {\bf u}$.

The sufficient condition of Perkins involves the requirement that certain words are {\em isoterms} for a semigroup under consideration. A word ${\bf u}$ is said to be an {\em isoterm} \cite{P} for a semigroup $S$ if $S$ does not satisfy any nontrivial identity of the form ${\bf u} \approx {\bf v}$.

Since Perkins introduced the notion of an isoterm, it has been a necessary ingredient
in the majority of many arguments invented by other researchers to establish an absence of the finite identity basis for a semigroup.
A locally finite algebra is said to be {\em inherently not finitely based} (INFB) if any locally finite variety containing it is NFB.
The syntactic part of the celebrated description of INFB finite semigroups given by Mark Sapir \cite{MS} states that
a finite semigroup $S$ is INFB  if and only if every Zimin word (${\bf Z}_1=x_1, \dots, {\bf Z}_{k+1} = {\bf Z}_kx_{k+1}{\bf Z}_k, \dots$)  is an isoterm for $S$.
This result implies that the Brandt monoid is INFB while finite $J$-trivial monoids are never INFB.

For the majority of $J$-trivial monoids which are known to be NFB, their property of having no finite identity basis is a consequence of
a sufficient condition of the following form.

 \begin{sufcon} \label{gen1} Let $\Sigma$ be a certain set of identities without any bound on the number of letters involved and $W$ be
 a subset of  ${\mathfrak A}^*$.
If a monoid $M$ satisfies all identities in $\Sigma$ and all the words in $W$ are isoterms for $M$, then $M$ is NFB.
\end{sufcon}

About ten years ago, Lee suggested to investigate the finite basis property of semigroups
\[
L_\ell = \langle e,f \mid e^2=e, f^2=f, \underbrace{efefef \cdots}_{\text{length }\ell} = 0 \rangle, \quad \ell \geq 2
\]
and the monoids $L_\ell^1$ obtained by adjoining an identity element to $L_\ell$.

The 4-element semigroup $L_2 =A_0$ is  long known to be finitely based \cite{1980}.
Zhang and  Luo proved \cite{WTZ1} that the 6-element semigroup $L_3$ is NFB and  Lee generalized this
result into a sufficient condition  \cite{EL} which implies that for all $\ell \geq 3$, the semigroup $L_\ell$ is NFB  \cite{EL1}.

 As for the monoids $L_\ell^1$, the 5-element monoid $L_2^1=A_0^1$ was also proved to be FB by Edmunds \cite{1977}
 and it is shown in \cite{WTZ, OS, MiS} that  for each $\ell \ge 3$ the monoid $L^1_\ell$ is NFB.

In \cite{OS}, we generalized the notion of an isoterm into
a notion of a {\em $\tau$-term}, because  isoterms
give no information about the finite basis  property of Lee monoids (see Section 6 in \cite{OS}).
If  $\tau$ is an equivalence relation on the free semigroup  $\mathfrak A^+$ then a word ${\bf u}$ is said to be  a {\em $\tau$-term} for a semigroup $S$ if ${\bf u} \tau {\bf v}$ whenever $S$ satisfies ${\bf u} \approx {\bf v}$.

 Given a congruence $\tau$ on the free monoid  $\mathfrak A^*$,  we use $\circ{_\tau}$ to denote the binary operation on the quotient monoid $\mathfrak A^*/\tau$.  The elements of  $\mathfrak A^*/\tau$ are called {\em $\tau$-words} or {\em $\tau$-classes}
and written using lowercase letters in the typewriter style\footnote{We call the elements of $\mathfrak A^*/\tau$ by {\em $\tau$-words} when we want to emphasize the 
 relations between them and we refer to  ${\mathtt u} \in \mathfrak A^*/\tau$
as a {\em $\tau$-class} when we are interested in the description of the words contained in  ${\mathtt u}$.}. 
We extend the definition of a $\tau$-term  to $\tau$-classes of $\mathfrak A^+$  as follows:
a $\tau$-class  ${\mathtt u}$ is said to be a {\em $\tau$-term} for $S$ if every word in  ${\bf u} \in {\mathtt u}$ is a $\tau$-term for $S$.

 Let $\tau_1$ denote the congruence on the free monoid ${\mathfrak A}^*$ induced by the relations $a=a^2$ for each $a \in{\mathfrak A}$.
 Then $\mathtt{a^{1+}}= \{a^n \mid n \ge 1\}$ is an idempotent of ${\mathfrak A}^*/ {\tau_1}$ and every element of  ${\mathfrak A}^+/ {\tau_1}$ can be represented as a word in the alphabet $\{{\mathtt a}^{1+} \mid a \in \mathfrak A\}$. For example,
 $(\mathtt{a^{1+} \circ_{\tau_1}b^{1+}\circ_{\tau_1}a^{1+}}) = \{a^m b^n a^k \mid m,n,k \ge 1\}$.
Using $\tau_1$-terms, the sufficient condition of Lee \cite{EL} for semigroups can be reformulated as follows.

\begin{sufcon} \label{EL} (cf. \cite[Theorem 6.2]{MiS})
Suppose that a semigroup $S$ satisfies
$$\Sigma = \{xy^2_1y^2_2 \dots y^2_nx  \approx  xy^2_ny^2_{n-1} \dots y^2_1x \mid n \ge 2\}$$
and  the $\tau_1$-word $(\mathtt{a^{1+}\circ_{\tau_1} b^{1+} \circ_{\tau_1}a^{1+}})$ is a $\tau_1$-term for $S$, then $S$ is NFB.
 \end{sufcon}

 The NFB property of every Lee monoid  $L^1_\ell$ for $\ell \geq 3$ follows from one of the three  sufficient conditions in \cite{MiS, OS1, OS}. Each of these sufficient conditions
 can be obtained from the following general sufficient condition by substituting a specific set of identities for $\Sigma$ and 
by substituting either   $(\mathtt{a^{1+} \circ_{\tau_1}b^{1+}\circ_{\tau_1}a^{1+}})$ or  $(\mathtt{a^{1+} \circ_{\tau_1}b^{1+}\circ_{\tau_1}a^{1+} \circ_{\tau_1}b^{1+}})$ or 
$(\mathtt{a^{1+} \circ_{\tau_1}b^{1+}\circ_{\tau_1}a^{1+} \circ_{\tau_1}b^{1+}\circ_{\tau_1}a^{1+}})$
 for $\mathtt{u}$.

 \begin{sufcon} \label{gen} Let $\Sigma$ be  certain set of identities without any bound on the number of letters involved.
If a monoid $M$ satisfies all the identities in $\Sigma$ and  certain  $\tau_1$-word  $\mathtt{u} \in{\mathfrak A}^*/{\tau_1}$ is a $\tau_1$-term for $M$, then $M$ is NFB.
\end{sufcon}

In 1944, Morse and Hedlund \cite{MH} constructed an infinite set of words $W$ (`unending sequence" \cite{MH}), in a three-letter alphabet with the property that no word in $W$ has a subword of the form ${\bf uu}$.
As mentioned in the introduction of \cite{MH}, Dilworth pointed out that `the existence of
unending sequences" with certain properties
`makes possible the construction of useful examples of semigroups".
In particular, this set of words $W$ yields an infinite semigroup $S$  `generated
by three elements such that the square of every element in $S$ is zero".
So, in essence, the $M(W) \setminus \{1\}$ construction was also used in \cite{MH} to obtain the first example of an infinite finitely generated nil-semigroup.

In 2005, Jackson \cite{MJ} used the  Dilworth-Perkins construction to give the first explicit examples of {\em  limit varieties} of monoids in the sense that each of these varieties is NFB while each proper monoid subvariety of each of these varieties is FB.
Jackson's limit varieties are generated by
$M(\{atbasb\})$ and  $M(\{abtasb, atbsab\})$.

 Let $H_\tau$ denote the natural homomorphism $\mathfrak A^* \rightarrow \mathfrak A^*/\tau$ corresponding to a congruence $\tau$.
In \cite{OS}, we generalised the Dilworth-Perkins construction $M(W)$ into $M_\tau(W)$ construction as follows.

\begin{definition}  \label{D: MtauW}
Let $\tau$ be a congruence on the free monoid $\mathfrak A^*$  and
$W \subseteq \mathfrak A^*$ be a set of words closed under $\tau$. 
Then $M_\tau(W)$ denotes the Rees quotient  of  $\mathfrak A^*/\tau$  over the ideal $H_\tau(\mathfrak A^* \setminus  W^{\le})$.
\end{definition}

For example, according to \cite{OS}, the 5-element monoid
\begin{equation} \label{A01} A_0^1 = L^1_2 = \langle 1, e, f \mid e^2=e, f^2=f, ef = 0 \rangle\end{equation}
 is isomorphic to $M_{\tau_1}(W)$, where 
$W = (\mathtt{a^{1+} \circ_{\tau_1} b^{1+} }) = \{a^m b^n \mid m,n \ge 1\}$.

To simplify the computations involving finite monoids, we redefine the generalized Dilworth-Perkins construction in terms of the (finite)
subsets of $\mathfrak A^* / \tau$ instead of the (infinite) subsets of  $\mathfrak A^*$. The subsets of $\mathfrak A^* / \tau$ are denoted by  capital letters in the typewriter style.
According to Definition~\ref{D: MtauW1},  $A_0^1$ is isomorphic to
 $M_{\tau_1}(\mathtt W)$, where
$\mathtt W = \{\mathtt{a^{1+}\circ_{\tau_1} b^{1+} }\}$ consists of a single $\tau_1$-word.

 Lemma 7.1 in \cite{OS} and its proof give us the following connection between monoids of the form $M_\tau(W)$ and $\tau$-terms for monoid varieties.

\begin{fact} \label{prec}
Let $\tau$ be a congruence on the free monoid $\mathfrak A^*$ such that the empty word  forms a singleton $\tau$-class.
Let $W \subseteq \mathfrak A^*$ be a set of words  which is a union of $\tau$-classes and is closed under taking subwords.
Then a monoid variety $\vv$ contains $M_\tau(W)$ if and only if every word in $W$ is a $\tau$-term for $\vv$.
\end{fact}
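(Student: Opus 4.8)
The plan is to reduce both implications to one explicit description of the canonical surjection $\psi\colon\mathfrak A^*\to M_\tau(W)$, namely the composite of $H_\tau$ with the Rees projection $\mathfrak A^*/\tau\to M_\tau(W)$ (we may assume $W\neq\emptyset$, the degenerate case being immediate). I claim that for every word $\mathbf w=w_1\cdots w_k$ one has $\psi(\mathbf w)=H_\tau(\mathbf w)$ if $\mathbf w\in W$ and $\psi(\mathbf w)=0$ otherwise. Since $W$ is a union of $\tau$-classes, so is $\mathfrak A^*\setminus W^{\le}=\mathfrak A^*\setminus W$, hence $H_\tau(\mathfrak A^*\setminus W)=(\mathfrak A^*/\tau)\setminus H_\tau(W)$, and therefore a class $H_\tau(\mathbf w)$ lies in the defining ideal of $M_\tau(W)$ precisely when $\mathbf w\notin W$. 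If $\mathbf w\in W$, then, $W$ being closed under subwords, every prefix $w_1\cdots w_j$ lies in $W$, so no partial product $H_\tau(w_1)\star\cdots\star H_\tau(w_j)$ meets that ideal and the full product equals $H_\tau(\mathbf w)$; if $\mathbf w\notin W$, let $j$ be least with $w_1\cdots w_j\notin W$, so that the partial product through $w_{j-1}$ equals $H_\tau(w_1\cdots w_{j-1})$ while multiplying it by $H_\tau(w_j)$ lands in the ideal, making the value $0$, which then absorbs the remaining factors. As the empty word is a subword of every word of $W$, the identity $\tau$-class is not killed, so $M_\tau(W)$ is a monoid and $\psi$ a monoid homomorphism; moreover $\psi$ is onto $M_\tau(W)$ whenever $W\neq\mathfrak A^*$, the leftover case being treated at the end.

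For ``$\Leftarrow$'', assume every word of $W$ is a $\tau$-term for $\vv$. Since a monoid belongs to $\vv$ if and only if it satisfies every identity of $\vv$, it suffices to check that $M_\tau(W)$ satisfies each such identity $\mathbf u\approx\mathbf v$. The key auxiliary statement is: \emph{if $\vv\models\mathbf p\approx\mathbf q$, then $\psi(\mathbf p)=\psi(\mathbf q)$.} Indeed, if $\mathbf p\in W$, then $\mathbf p$ is a $\tau$-term for $\vv$, so $\mathbf p\mathrel\tau\mathbf q$, whence $\mathbf q\in W$ (as $W$ is a union of $\tau$-classes) and $\psi(\mathbf p)=H_\tau(\mathbf p)=H_\tau(\mathbf q)=\psi(\mathbf q)$; the case $\mathbf q\in W$ is symmetric; and if neither word lies in $W$, then $\psi(\mathbf p)=0=\psi(\mathbf q)$. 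Now take any assignment of the variables into $M_\tau(W)$; by surjectivity of $\psi$, choose for each variable $x$ a word $\mathbf w_x$ with $\psi(\mathbf w_x)$ equal to the assigned value, and let $\theta$ be the endomorphism of $\mathfrak A^*$ with $\theta(x)=\mathbf w_x$. Then this assignment evaluates the two sides of $\mathbf u\approx\mathbf v$ to $\psi(\theta\mathbf u)$ and $\psi(\theta\mathbf v)$, and since $\vv\models\theta\mathbf u\approx\theta\mathbf v$, the auxiliary statement gives $\psi(\theta\mathbf u)=\psi(\theta\mathbf v)$, as required. (Equivalently, $\psi$ factors through the relatively free monoid $F_{\vv}(\mathfrak A)$, exhibiting $M_\tau(W)$ as a homomorphic image of a member of $\vv$.)

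For ``$\Rightarrow$'', assume $M_\tau(W)\in\vv$ and fix $\mathbf u\in W$; I must show $\mathbf u$ is a $\tau$-term for $\vv$. If $\vv\models\mathbf u\approx\mathbf v$, then $M_\tau(W)$ satisfies $\mathbf u\approx\mathbf v$; evaluating both sides under the assignment $x\mapsto\psi(x)$ — whose extension to $\mathfrak A^*$ is $\psi$ itself — gives $\psi(\mathbf u)=\psi(\mathbf v)$. By the description of $\psi$, $\psi(\mathbf u)=H_\tau(\mathbf u)\neq 0$ since $\mathbf u\in W$, hence $\psi(\mathbf v)\neq 0$, which forces $\mathbf v\in W$ and $\psi(\mathbf v)=H_\tau(\mathbf v)$. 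Therefore $H_\tau(\mathbf u)=H_\tau(\mathbf v)$, that is, $\mathbf u\mathrel\tau\mathbf v$, as needed.

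I expect the only genuine work to be the description of $\psi$, in which the two closure hypotheses on $W$ play complementary roles: closure under subwords keeps partial products of the generators out of the defining ideal, while being a union of $\tau$-classes identifies that ideal with the complement of $H_\tau(W)$. The singleton hypothesis on the empty word is what disposes of the leftover case $W=\mathfrak A^*$: here $M_\tau(W)$ is $\mathfrak A^*/\tau$ with an isolated zero adjoined, its nonzero part is a quotient of $F_{\vv}(\mathfrak A)$ exactly as above, and a substitution sending some variable to the adjoined zero cannot separate the two sides of an identity of $\vv$ — for if some variable occurred on only one side of an identity of $\vv$, then substituting $1$ for all other variables would give $\vv\models x^k\approx 1$ for some $k\geq 1$, and, the word $x^k$ lying in $W=\mathfrak A^*$ and hence being a $\tau$-term for $\vv$, this would force $x^k\mathrel\tau 1$, contradicting that $1$ is a singleton $\tau$-class.
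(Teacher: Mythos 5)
Your proof is correct and complete. The paper does not actually prove Fact~\ref{prec} itself but defers to Lemma~7.1 of \cite{OS}; your argument --- describing the canonical surjection $\psi\colon\mathfrak A^*\to M_\tau(W)$, using that every word of $W$ is a $\tau$-term to push arbitrary evaluations through $\psi$ in one direction and evaluating at the generators in the other, and separately disposing of the edge cases $W=\emptyset$ and $W=\mathfrak A^*$ (the latter being exactly where the singleton hypothesis on the empty word is needed) --- is the standard argument underlying that lemma and Jackson's Lemma~3.3, so it matches the intended proof in all essentials.
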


Fact \ref{prec} generalizes  Lemma 3.3 in \cite{MJ} which gives a connection between monoids of the form $M(W)$ and isoterms for
monoid varieties.

A letter is called {\em simple} ({\em multiple}) in a word $\bf u$ if it occurs in $\bf u$ once (at least
twice). The set of all (simple, multiple) letters in $\bf u$ is denoted by $\con({\bf u})$
(respectively, $\simp ({\bf u})$ and $\mul({\bf u}))$. 
We partition the congruence $\tau_1$ into three more congruences as follows.
 Given  ${\bf u}, {\bf v} \in \mathfrak A^*$, we define:
\begin{itemize}
\item ${\bf u} \gamma {\bf v}$ if and only if ${\bf u} \tau_1 {\bf v}$ and  $\mul({\bf u}) =  \mul({\bf v})$;
\item ${\bf u} \lambda {\bf v}$ if and only if ${\bf u} \gamma {\bf v}$ and the first two occurrences
of each  multiple  letter are adjacent in $\bf u$ if and only if these occurrences are  adjacent in $\bf v$;
\item ${\bf u} \rho {\bf v}$ if and only if ${\bf u} \gamma {\bf v}$ and the last two occurrences
of each  multiple  letter are adjacent in $\bf u$ if and only if these occurrences are  adjacent in $\bf v$.
\end{itemize}

Notice that the relations $\rho$ and $\lambda$ are dual to each other.
If $\tau \in \{\gamma, \lambda, \rho\}$ then each letter of the alphabet forms a  singleton $\tau$-class
$\mathtt{a} = \{a\}$ and $\mathtt{a^{2+}} = \{a^2, a^3, \dots\}$ is an idempotent of $\mathfrak A^*/\tau$ for each $a \in \mathfrak A^*$.
Since $\tau$ is a congruence,  every element of  ${\mathfrak A}^*/ \tau$ can be represented (in multiple ways) as a word in the alphabet $\{{\mathtt a}, {\mathtt a}^{2+} \mid a \in \mathfrak A\}$. For example,
\[ (\mathtt{a \circ_\lambda t \circ_\lambda b\circ_\lambda a^{2+}\circ_\lambda s \circ_\lambda b^{2+}}) =(\mathtt{a \circ_\lambda t \circ_\lambda b\circ_\lambda a\circ_\lambda s \circ_\lambda b}) = \{atba^{m}sb^{n} \mid m,n \ge 1\}\]
 is a $\lambda$-class of  $\mathfrak A^*$. 
We choose $(\mathtt{a \circ_\lambda t \circ_\lambda b\circ_\lambda a^{2+}\circ_\lambda s \circ_\lambda b^{2+}})$  as the canonical name of this $\lambda$-class.

 In  Proposition~\ref{congruence},  we show 
that  for each $\tau \in \{\gamma, \lambda, \rho\}$,
every element of  ${\mathfrak A}^*/ \lambda$ has a unique representation as a canonical word in the alphabet $\{{\mathtt a}, {\mathtt a}^{2+} \mid a \in \mathfrak A\}$.  Using this representation,  we show 
 that  given a finite set of $\tau$-words $\mathtt W \subset {\mathfrak A}^*/ \tau$, 
 $M_\tau(\mathtt W)$ is finite  $J$-trivial monoid and provide a simple algorithm for computing its  multiplication table. 
In particular, according to Example~\ref{E: J}, the monoid  $M_{\lambda} (\mathtt{a \circ_\lambda t \circ_\lambda b\circ_\lambda a^{2+}\circ_\lambda s \circ_\lambda b^{2+}})$ has 31 elements.

Given a monoid variety $\vv$  we use $\overline{\vv}$ to denote the variety {\em dual to} $\vv$, that is, the variety consisting of monoids anti-isomorphic to the monoids from $\vv$.  
Recently, Gusev \cite{SG} found two new limit varieties of monoids: $\mathbb J$ and $\overline{\mathbb J}$. The variety $\mathbb J$ is given by a certain (infinite) set of identities. While looking through his proof that $\mathbb J$ is NFB, we noticed that it yields
the following sufficient condition:

\begin{sufcon} \label{SG}
Suppose that a monoid  $M$ satisfies
\[\{x y_1 y_2 \dots y_n x t_1 y_1 t_2 y_2 \dots t_n y_n \approx x^2 y_1 y_2 \dots y_n  t_1 y_1 t_2 y_2 \dots t_n y_n \mid n \ge 1\}\]
and  the $\lambda$-word   $(\mathtt{a \circ_\lambda t \circ_\lambda b\circ_\lambda a^{2+}\circ_\lambda s \circ_\lambda b^{2+}})$ is a $\lambda$-term for $M$, then $M$ is NFB.
 \end{sufcon}

 Sufficient Condition \ref{SG} together with Fact \ref{prec} gave us a clue that the variety $\mathbb J$ is generated by $M_{\lambda} (\mathtt{a \circ_\lambda t \circ_\lambda b\circ_\lambda a^{2+}\circ_\lambda s \circ_\lambda b^{2+}})$ (see Theorem \ref{main}(v) below).

Let $A$ denote the monoid obtained by adjoining an identity element to the following semigroup:
\begin{equation}\label{A}
A=\langle e,f,c\mid e^2=e,\,f^2=f,\,ef=ce=0,\, ec=cf=c\rangle=\{e,f,c,fe,fc,0\}.
\end{equation}
The semigroup $A$ was introduced by its multiplication table and shown to be FB in \cite[Section~ 19]{LZ}.
Its presentation  was recently suggested by Edmond W. H. Lee. 
Let  $\overline{A}$ denote the semigroup anti-isomorphic to $A$.
Zhang and Luo  \cite{ZL} proved that the direct product $A^1 \times \overline{A}^1$ generates another limit variety of monoids.

 Together with Sergey Gusev we calculated that  $A^1 \times  \overline{A}^1$ is equationally equivalent to 
$M_{\gamma}(\{(\mathtt{a^{2+} \circ_\gamma t  \circ_\gamma b^{2+}  \circ_\gamma a^{2+}}), (\mathtt{a^{2+}  \circ_\gamma b^{2+}  \circ_\gamma t  \circ_\gamma a^{2+}})\})$ (Theorem \ref{main1}(iv)).

In Sect.~\ref{sec:new},
using the fact that $(\mathtt{a^{2+}  \circ_\gamma  t   \circ_\gamma b^{2+} \circ_\gamma  a^{2+}})$ and $(\mathtt{a^{2+}  \circ_\gamma b^{2+}  \circ_\gamma t  \circ_\gamma a^{2+}})$ are $\gamma$-terms for $A^1 \times \overline{A}^1$,
 we  give a short proof of  Theorem 6.2 in \cite{WTZ2}  that says that the monoid $A^1 \times \overline{A}^1$ is NFB.

 Let $\mathbb A$  denote the monoid variety generated by $A^1$.  
According to \cite{ZL},  the variety  $\mathbb A \vee \overline{\mathbb A}$ contains 13 finitely generated subvarieties.  In Sect.~\ref{sec:zl} we show that  every subvariety  of $\mathbb A \vee \overline{\mathbb A}$ is generated  by a (finite) monoid of the form $M_\gamma(W)$.

In addition to proving that $\mathbb J$ is a finitely generated limit variety of monoids, Gusev in \cite{SG} calculated the lattice of subvarieties of
$\mathbb J$ which consists of 12 finitely generated varieties. However, the explicit generators of five of these subvarieties including the variety
$\mathbb J$ itself were unknown. In Sect.~\ref{sec:J}   we show that  every subvariety of  $\mathbb J$ is generated  by a (finite) monoid of the form $M_\lambda(W)$.

 In~\cite{GS}, together with Gusev, we  show that the monoids
$M_\lambda(\mathtt{a  \circ_\lambda t  \circ_\lambda b^{2+}  \circ_\lambda a^{2+}})$ and  $M_\rho(\mathtt{a^{2+}  \circ_\rho b^{2+}  \circ_\rho t  \circ_\rho a})$ generate the last two  limit varieties of $J$-trivial monoids.
So, it turns out \cite{GS}, that all limit varieties of $J$-trivial monoids are generated by finite monoids of the form $M_\tau(W)$, where $\tau$ is either the trivial congruence on $\mathfrak A^*$ or $\tau \in \{\gamma, \lambda, \rho\}$.

\section{ Dillworth-Perkins construction in terms of $\tau$-words}

Given a congruence $\tau$ on $\mathfrak A^*$, the `subword" relation  $\le$  between words can be generalized to a relation between $\tau$-words as follows. Given two $\tau$-words ${\mathtt u}, {\mathtt v} \in \mathfrak A^*/\tau$  we write ${\mathtt v} \le_\tau {\mathtt u}$ if ${\mathtt u} = {\mathtt p}\circ_\tau {\mathtt v}\circ_\tau {\mathtt s}$ for some   ${\mathtt p}, {\mathtt s} \in \mathfrak A^*/\tau$.

\begin{lemma} \label{subword} For ${\mathtt u}, {\mathtt v} \in \mathfrak A^*/ \tau$  the following are equivalent:

(i)  ${\mathtt v} \le_\tau {\mathtt u}$;

(ii) every word  ${\bf v} \in {\mathtt v}$  is a subword of a word  ${\bf u} \in {\mathtt u}$;

(iii) some word ${\bf v} \in {\mathtt v}$  is a subword of a word  ${\bf u} \in {\mathtt u}$.

\end{lemma}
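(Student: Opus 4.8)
The plan is to establish the cycle of implications $(ii) \Rightarrow (iii) \Rightarrow (i) \Rightarrow (ii)$, where the first implication is trivial (a $\tau$-class is never empty, since $\mathfrak A^*$ is nonempty, so ``every'' implies ``some''). The real content is in $(iii) \Rightarrow (i)$ and $(i) \Rightarrow (ii)$, and both rest on the single fact that $\tau$ is a \emph{congruence}, i.e.\ compatible with concatenation, so that the operation $\circ_\tau$ on $\mathfrak A^*/\tau$ is well defined and $H_\tau$ is a monoid homomorphism.

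For $(i) \Rightarrow (ii)$: suppose ${\mathtt u} = {\mathtt p}\circ_\tau {\mathtt v}\circ_\tau {\mathtt s}$. Pick an arbitrary ${\bf v} \in {\mathtt v}$, and pick any ${\bf p} \in {\mathtt p}$ and ${\bf s} \in {\mathtt s}$. Since $H_\tau$ is a homomorphism, $H_\tau({\bf p}{\bf v}{\bf s}) = {\mathtt p}\circ_\tau {\mathtt v}\circ_\tau {\mathtt s} = {\mathtt u}$, so the word ${\bf u} := {\bf p}{\bf v}{\bf s}$ lies in ${\mathtt u}$ and manifestly contains ${\bf v}$ as a (contiguous) subword. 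This handles $(ii)$, and in particular $(iii)$ comes for free along the way.

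For $(iii) \Rightarrow (i)$: suppose ${\bf v} \in {\mathtt v}$ is a subword of some ${\bf u} \in {\mathtt u}$, say ${\bf u} = {\bf p}{\bf v}{\bf s}$ with ${\bf p}, {\bf s} \in \mathfrak A^*$. Applying $H_\tau$ and using that it is a homomorphism, ${\mathtt u} = H_\tau({\bf u}) = H_\tau({\bf p})\circ_\tau H_\tau({\bf v})\circ_\tau H_\tau({\bf s}) = H_\tau({\bf p})\circ_\tau {\mathtt v}\circ_\tau H_\tau({\bf s})$, since ${\bf v} \in {\mathtt v}$ means $H_\tau({\bf v}) = {\mathtt v}$. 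Setting ${\mathtt p} = H_\tau({\bf p})$ and ${\mathtt s} = H_\tau({\bf s})$ gives exactly ${\mathtt v} \le_\tau {\mathtt u}$.

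There is no genuine obstacle here; the lemma is essentially a restatement of the fact that the ``factor'' relation is preserved and reflected by a surjective monoid homomorphism, once one unwinds the notation that a $\tau$-word is a $\tau$-class and that ``${\bf v} \in {\mathtt v}$'' means ``$H_\tau({\bf v}) = {\mathtt v}$''. The only point requiring a half-sentence of care is the implicit claim that every $\tau$-class is nonempty (used to pass from $(ii)$ to $(iii)$ and to choose representatives ${\bf p}, {\bf s}$ in the argument for $(i) \Rightarrow (ii)$), which is immediate because $H_\tau$ is surjective. Everything else is a one-line application of the homomorphism property of $H_\tau$.
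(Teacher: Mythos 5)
Your proof is correct and follows essentially the same route as the paper: both directions rest on the single observation that $H_\tau$ is a monoid homomorphism, with $(i)\Rightarrow(ii)$ obtained by lifting representatives ${\bf p}\in{\mathtt p}$, ${\bf s}\in{\mathtt s}$ and $(iii)\Rightarrow(i)$ by pushing the factorization ${\bf u}={\bf p}{\bf v}{\bf s}$ down through $H_\tau$. The only cosmetic difference is the order in which the implications are cycled.
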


 \begin{proof}  (i) $\Rightarrow$ (ii) Since  ${\mathtt v} \le_\tau {\mathtt u}$, we have  ${\mathtt u} = {\mathtt p}\circ_\tau {\mathtt v}\circ_\tau {\mathtt s}$ for some   ${\mathtt p}, {\mathtt s} \in \mathfrak A^*/\tau$.

Take some $\mathbf p \in {\mathtt p}$ and some  $\mathbf s \in {\mathtt s}$. Then $H_\tau(\mathbf{pvs}) =
\mathtt{p} {\circ_\tau}\mathtt{v} {\circ_\tau}\mathtt{s} = {\mathtt u}$. Hence $\bf v$ is a subword of ${\bf u} = \mathbf{pvs} \in {\mathtt u}$.

 Implication (ii) $\Rightarrow$ (iii) is evident.

 (iii) $\Rightarrow$ (i) Since ${\bf v}$ is a subword of ${\bf u}$, we have ${\bf u} = {\bf p v} {\bf s}$ for some ${\bf p}, {\bf s} \in \mathfrak A^*$. Consequently,  ${\mathtt u} = H_\tau ({\bf p})\circ_\tau {\mathtt v} \circ_\tau H_\tau({\bf s})$. Hence  ${\mathtt v} \le_\tau {\mathtt u}$.
\end{proof}

Given a  congruence $\tau$ and a
set of $\tau$-words ${\mathtt W}$, let ${\mathtt W}^{\le_\tau}$ denote the closure of ${\mathtt W}$ in quasi-order $\le_\tau$.

\begin{definition} \label{D: MtauW1}
 Let $\tau$ be a congruence  on the free monoid  $\mathfrak A^*$ and ${\mathtt W} \subseteq \mathfrak A^*/ \tau$ be a set of $\tau$-words. Then $M_\tau({\mathtt W})$ denotes the Rees quotient of $\mathfrak A^*/ \tau$  over the ideal $(\mathfrak A^*/ \tau) \setminus  {\mathtt W}^{\le_\tau}$.
\end{definition}

Lemma~\ref{subword} implies that given a set of words  $W \subseteq \mathfrak A^*$ which is a union of $\tau$-classes,  we have
$H_\tau(\mathfrak A^* \setminus  W^{\le})= (\mathfrak A^*/ \tau) \setminus  {\mathtt W}^{\le_\tau}$, where $\mathtt W = H_\tau(W) \subseteq \mathfrak A^*/\tau$.
Therefore,  Definitions~\ref{D: MtauW} and  \ref{D: MtauW1} are equivalent.

According Definition~\ref{D: MtauW1}, given a set of $\tau$-words ${\mathtt W}$ we have

\begin{equation} \label{mult in MW} M_\tau({\mathtt W}) = \langle {\mathtt W}^{\le_\tau} \cup \{0\}, \star_\tau \rangle,\end{equation}
 where multiplication $\star_\tau$ is given by

$\bullet$ ${\mathtt u} \star_\tau {\mathtt v} = \mathtt {u} \circ_\tau \mathtt{v}$ if   $\mathtt {u} \circ_\tau \mathtt{v} \in {\mathtt W}^{\le_\tau}$, otherwise, $\mathtt {u} \star_\tau \mathtt {v} = 0$.

\begin{prop} \label{P: MW}  Let $\tau$ be a congruence on the free monoid $\mathfrak A^*$ such that the empty word $1$  forms a singleton $\tau$-class.
Let $W \subseteq \mathfrak A^*$ be a set of words closed under $\tau$ and  ${\mathtt W} = H_\tau(W)  \subseteq \mathfrak A^*/ \tau$  be the corresponding set of $\tau$-words. Then for every monoid variety $\vv$  the following are equivalent:

(i)  $\vv$ contains $M_\tau(W) = M_\tau({\mathtt W})$;

(ii)  every word in $W^\le$ is a $\tau$-term for $\vv$;

(iii)  every $\tau$-word in ${\mathtt W}^{\le_\tau}$ is a $\tau$-term for $\vv$.

\end{prop}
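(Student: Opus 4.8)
The plan is to derive the proposition from Fact~\ref{prec} together with Lemma~\ref{subword}; indeed Proposition~\ref{P: MW} is essentially Fact~\ref{prec} restated for an arbitrary $W$ (not assumed closed under subwords) and transcribed into the language of $\tau$-words.

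First I would pass to the subword-closed set $W^\le$. Since $(W^\le)^\le = W^\le$, the ideals occurring in Definition~\ref{D: MtauW} for $W$ and for $W^\le$ coincide, so $M_\tau(W) = M_\tau(W^\le)$; and $M_\tau(W) = M_\tau(\mathtt W)$ by the equivalence of Definitions~\ref{D: MtauW} and~\ref{D: MtauW1} noted just after the latter. Next I would verify that $W^\le$ is again a union of $\tau$-classes and, more precisely, that $W^\le = \bigcup\{\mathtt v : \mathtt v \in \mathtt W^{\le_\tau}\}$. For ``$\supseteq$'': if $\mathtt v \le_\tau \mathtt u$ with $\mathtt u \in \mathtt W$, then, since $W$ is closed under $\tau$, the whole class $\mathtt u$ lies in $W$, and by the implication (i)$\Rightarrow$(ii) of Lemma~\ref{subword} every word in $\mathtt v$ is a subword of a word in $\mathtt u \subseteq W$, hence lies in $W^\le$. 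For ``$\subseteq$'': if ${\bf v} \in W^\le$, say ${\bf v}$ is a subword of ${\bf u} \in W$, then by the implication (iii)$\Rightarrow$(i) of Lemma~\ref{subword} we get $H_\tau({\bf v}) \le_\tau H_\tau({\bf u})$, and $H_\tau({\bf u}) \in \mathtt W$, so ${\bf v}$ belongs to the class $H_\tau({\bf v}) \in \mathtt W^{\le_\tau}$. In particular the $\tau$-classes contained in $W^\le$ are exactly the $\tau$-words in $\mathtt W^{\le_\tau}$.

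Now the equivalences follow quickly. Applying Fact~\ref{prec} to $W^\le$ in place of $W$ --- which is legitimate since $W^\le$ is a union of $\tau$-classes, is closed under subwords, and the empty word forms a singleton $\tau$-class by hypothesis --- gives that $\vv$ contains $M_\tau(W^\le) = M_\tau(W) = M_\tau(\mathtt W)$ if and only if every word in $W^\le$ is a $\tau$-term for $\vv$; this is (i)$\Leftrightarrow$(ii). Finally, by the definition of a $\tau$-term for a $\tau$-class, a $\tau$-word is a $\tau$-term for $\vv$ exactly when every word it contains is; combined with the identification of $\mathtt W^{\le_\tau}$ with the set of $\tau$-classes inside $W^\le$ from the previous paragraph, this converts ``every word in $W^\le$ is a $\tau$-term'' into ``every $\tau$-word in $\mathtt W^{\le_\tau}$ is a $\tau$-term'', i.e.\ (ii)$\Leftrightarrow$(iii).

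The only step needing real care is showing that $W^\le$ does not split any $\tau$-class, that is, that passing to subwords preserves being a union of $\tau$-classes --- and this is precisely what the equivalence of all three clauses of Lemma~\ref{subword} is designed to supply; everything else is unwinding definitions, so I do not anticipate a genuine obstacle. Alternatively one could bypass Fact~\ref{prec} and argue directly as in the proof of Lemma~7.1 of~\cite{OS}: the ideal defining $M_\tau(W^\le)$ is spanned by the $\tau$-words that fail to be $\tau$-terms for $\vv$, and a monoid variety contains a Rees quotient of $\mathfrak A^*/\tau$ if and only if it satisfies all the identities collapsing that ideal to $0$. Invoking Fact~\ref{prec} is simply shorter.
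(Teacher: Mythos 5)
Your proposal is correct and follows essentially the same route as the paper: the paper's own proof also reduces to Fact~\ref{prec} applied to $W^\le$ via $M_\tau(W)=M_\tau(W^\le)$ for (i)$\Leftrightarrow$(ii), and to the definition of a $\tau$-term for a $\tau$-class for (ii)$\Leftrightarrow$(iii). The only difference is that you spell out the verification (via Lemma~\ref{subword}) that $W^\le$ is a union of $\tau$-classes and that these classes are exactly the elements of $\mathtt W^{\le_\tau}$, which the paper leaves to the remark following Definition~\ref{D: MtauW1}.
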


\begin{proof}  Since $M_\tau(W) = M_\tau(W^\le)$,     the equivalence of (i) and (ii) is a consequence of  Fact~\ref{prec}.
Since $\mathtt W = H_\tau(W)$,  the equivalence of (ii) and (iii) is a consequence of the definition of a $\tau$-term (see the introduction).
\end{proof}

The  definition of what it means for a word  to be a  $\tau$-term for a monoid variety readily implies the following.

\begin{obs} \label{O: twocongr}  Let $\tau$ and $\tau'$ be two congruences on $\mathfrak A^*$. If   $\tau$ partitions $\tau'$
then every $\tau$-term for a monoid variety $\vv$ is also a  $\tau'$-term for $\vv$.
\end{obs}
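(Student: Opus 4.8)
The plan is to unpack the definitions and chase the implication directly. Recall that a word $\mathbf u$ is a $\tau$-term for a monoid variety $\vv$ if, whenever $\vv$ satisfies $\mathbf u \approx \mathbf v$, we have $\mathbf u \mathrel{\tau} \mathbf v$; and a $\tau$-class $\mathtt u$ is a $\tau$-term for $\vv$ if every word it contains is. So suppose $\mathtt u$ is a $\tau$-term for $\vv$, where now I read $\mathtt u$ as a $\tau$-class, i.e. an element of $\mathfrak A^*/\tau$, and I want to show the corresponding $\tau'$-class is a $\tau'$-term for $\vv$. Since $\tau$ partitions $\tau'$, every $\tau'$-class is a disjoint union of $\tau$-classes; fix a $\tau'$-class $\mathtt u'$ and pick any word $\mathbf u \in \mathtt u'$. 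I must show $\mathbf u$ is a $\tau'$-term for $\vv$, that is, if $\vv \models \mathbf u \approx \mathbf v$ then $\mathbf u \mathrel{\tau'} \mathbf v$.

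The key step is the following: the word $\mathbf u$ lies in a unique $\tau$-class, call it $\mathtt u_0$, and since $\tau \subseteq \tau'$ (which is what "$\tau$ partitions $\tau'$" means) we have $\mathtt u_0 \subseteq \mathtt u'$. By hypothesis $\mathtt u_0$ is a $\tau$-term for $\vv$ — wait, that is not quite the hypothesis; the hypothesis is about a specific $\tau$-class being a $\tau$-term. I should instead phrase the statement as: if a particular word $\mathbf u$ is a $\tau$-term for $\vv$, then $\mathbf u$ is a $\tau'$-term for $\vv$ (this is the word-level version, which the $\tau$-class version follows from immediately by quantifying over all words in the class). Given that, the argument is: assume $\vv \models \mathbf u \approx \mathbf v$. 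Since $\mathbf u$ is a $\tau$-term for $\vv$, we get $\mathbf u \mathrel{\tau} \mathbf v$. Since $\tau \subseteq \tau'$, this gives $\mathbf u \mathrel{\tau'} \mathbf v$, as required. Hence $\mathbf u$ is a $\tau'$-term for $\vv$.

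The only genuinely substantive point — and it is essentially just bookkeeping — is to be careful about the phrase "$\tau$ partitions $\tau'$": it must be interpreted as $\tau$ being a finer congruence than $\tau'$, equivalently $\tau \subseteq \tau'$ as subsets of $\mathfrak A^* \times \mathfrak A^*$, equivalently every $\tau'$-class is a union of $\tau$-classes. Once that is pinned down, the proof is a one-line inclusion of relations applied inside the defining implication of a $\tau$-term, with no obstacle to speak of; the lemma is recorded precisely because this trivial observation gets used repeatedly (e.g. to transfer $\gamma$-terms to $\tau_1$-terms, since $\gamma, \lambda, \rho$ all partition $\tau_1$). I would write it in two or three sentences: interpret the hypothesis, invoke $\tau \subseteq \tau'$, and conclude. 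No induction, no case analysis, and nothing beyond the definitions already given in the introduction is needed.
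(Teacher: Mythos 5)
Your proof is correct and is exactly the argument the paper has in mind: the paper gives no explicit proof, stating only that the observation "readily" follows from the definition of a $\tau$-term, and your one-line chain ($\vv \models \mathbf u \approx \mathbf v$ implies $\mathbf u \mathrel{\tau} \mathbf v$ implies $\mathbf u \mathrel{\tau'} \mathbf v$ since $\tau \subseteq \tau'$) is that argument. Your careful pinning down of "$\tau$ partitions $\tau'$" as $\tau$ being the finer relation is the right reading and the only point where care is needed.
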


 Let  $\mathbb M_\tau(\mathtt W)$ denote the variety generated by  $M_\tau(\mathtt W)$.

\begin{cor}\label{product}  Let  $\sigma_1$ and $\sigma_2$ be congruences on the free monoid  $\mathfrak A^*$  such that the empty word $1$ forms a singleton $\sigma_1$-class and a singleton $\sigma_2$-class.   Let ${W}_1$ and $ {W}_2$ be two sets of words closed under relations $\sigma_1$ and $\sigma_2$ respectively.  Then
\begin{equation} \label{e: product}
\mathbb M_{\sigma_1}(W_1) \vee  \mathbb M_{\sigma_2}(W_2) \subseteq \mathbb M_{\sigma_1 \wedge \sigma_2}(W_1 \cup W_2).
\end{equation}
If $\sigma_1=\sigma_2=\tau$ then
\begin{equation}  \label{e: product1}
\mathbb M_{\tau}(W_1) \vee  \mathbb M_{\tau}(W_2) = \mathbb M_{\tau}(W_1 \cup W_2).
\end{equation}
If $W_1=W_2=W$ then
\begin{equation} \label{e: product2}
\mathbb M_{\sigma_1}(W) \vee  \mathbb M_{\sigma_2}(W) = \mathbb M_{\sigma_1 \wedge \sigma_2}(W).
\end{equation}

\end{cor}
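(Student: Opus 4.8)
The plan is to turn everything into statements about $\tau$-terms, using Proposition~\ref{P: MW} as a dictionary and Observation~\ref{O: twocongr} to move between congruences. Before starting I would record three elementary facts. First, since $\sigma_1\wedge\sigma_2=\sigma_1\cap\sigma_2$, as a relation, is contained in both $\sigma_1$ and $\sigma_2$, any set of words closed under $\sigma_1$ or under $\sigma_2$ is automatically closed under $\sigma_1\wedge\sigma_2$; hence $W_1$, $W_2$ and $W_1\cup W_2$ are all closed under $\sigma_1\wedge\sigma_2$, the empty word forms a singleton $(\sigma_1\wedge\sigma_2)$-class, and Proposition~\ref{P: MW} applies to $M_{\sigma_1\wedge\sigma_2}(W_1\cup W_2)$. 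Second, subword closure commutes with unions: $(W_1\cup W_2)^\le=W_1^\le\cup W_2^\le$. Third, if $\vv'\subseteq\vv''$ are monoid varieties, then every $\tau$-term for $\vv'$ is a $\tau$-term for $\vv''$; this is immediate from the definition, since an identity holding throughout $\vv''$ holds throughout the subvariety $\vv'$.

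For inclusion~\eqref{e: product} I would set $\vv=\mathbb M_{\sigma_1\wedge\sigma_2}(W_1\cup W_2)$ and show that $\vv$ contains both $M_{\sigma_1}(W_1)$ and $M_{\sigma_2}(W_2)$. Applying Proposition~\ref{P: MW} to $M_{\sigma_1\wedge\sigma_2}(W_1\cup W_2)\in\vv$ shows that every word in $(W_1\cup W_2)^\le$, and in particular every word in $W_1^\le$, is a $(\sigma_1\wedge\sigma_2)$-term for $\vv$. Since $\sigma_1\wedge\sigma_2$ partitions $\sigma_1$, Observation~\ref{O: twocongr} upgrades this to: every word in $W_1^\le$ is a $\sigma_1$-term for $\vv$. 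Reading Proposition~\ref{P: MW} in the direction $(\mathrm{ii})\Rightarrow(\mathrm{i})$ with $\tau=\sigma_1$ and $W=W_1$ then gives $M_{\sigma_1}(W_1)\in\vv$, so $\mathbb M_{\sigma_1}(W_1)\subseteq\vv$. The symmetric argument gives $\mathbb M_{\sigma_2}(W_2)\subseteq\vv$, and the join of the two lies in $\vv$.

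For~\eqref{e: product1} and~\eqref{e: product2} the inclusion $\subseteq$ is the relevant special case of~\eqref{e: product}, so only $\supseteq$ would remain. Writing $\mathbb U$ for the join on the left, Proposition~\ref{P: MW} makes every word in $W_i^\le$ a $\sigma_i$-term for $\mathbb M_{\sigma_i}(W_i)$, hence, by the third fact above and since $\mathbb M_{\sigma_i}(W_i)\subseteq\mathbb U$, a $\sigma_i$-term for $\mathbb U$. In the case of~\eqref{e: product1}, where $\sigma_1=\sigma_2=\tau$, this says that every word of $W_1^\le\cup W_2^\le=(W_1\cup W_2)^\le$ is a $\tau$-term for $\mathbb U$, so $M_\tau(W_1\cup W_2)\in\mathbb U$ by Proposition~\ref{P: MW}, i.e.\ $\mathbb M_\tau(W_1\cup W_2)\subseteq\mathbb U$. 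In the case of~\eqref{e: product2}, where $W_1=W_2=W$, each word ${\bf u}\in W^\le$ is simultaneously a $\sigma_1$-term and a $\sigma_2$-term for $\mathbb U$; hence whenever $\mathbb U$ satisfies ${\bf u}\approx{\bf v}$ we get both ${\bf u}\,\sigma_1\,{\bf v}$ and ${\bf u}\,\sigma_2\,{\bf v}$, that is ${\bf u}\,(\sigma_1\wedge\sigma_2)\,{\bf v}$, so ${\bf u}$ is a $(\sigma_1\wedge\sigma_2)$-term for $\mathbb U$. Proposition~\ref{P: MW} then gives $M_{\sigma_1\wedge\sigma_2}(W)\in\mathbb U$, completing $\supseteq$.

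I do not expect a genuine obstacle: the argument is a bookkeeping exercise once Proposition~\ref{P: MW} and Observation~\ref{O: twocongr} are available. The only points needing care are directional ones, namely which variety is the sub- and which the super-variety at each use of the third preliminary fact, and that $\sigma_1\wedge\sigma_2$ refines rather than coarsens each $\sigma_i$. The sole place where equality (not merely inclusion) genuinely bites is the last step of~\eqref{e: product2}: a word is a $(\sigma_1\wedge\sigma_2)$-term for a variety precisely when it is at once a $\sigma_1$-term and a $\sigma_2$-term, and it is this equivalence that forces $\mathbb M_{\sigma_1}(W)\vee\mathbb M_{\sigma_2}(W)$ to be no smaller than $\mathbb M_{\sigma_1\wedge\sigma_2}(W)$.
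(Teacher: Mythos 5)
Your proposal is correct and follows essentially the same route as the paper: Proposition~\ref{P: MW} translates containment of $M_\tau(W)$ into a statement about $\tau$-terms, Observation~\ref{O: twocongr} handles the passage from $\sigma_1\wedge\sigma_2$ to each $\sigma_i$ for the inclusion~\eqref{e: product}, and the reverse inclusions use exactly the paper's observations that $(W_1\cup W_2)^\le=W_1^\le\cup W_2^\le$ and that a word which is both a $\sigma_1$-term and a $\sigma_2$-term is a $(\sigma_1\wedge\sigma_2)$-term. The only cosmetic difference is that you derive ``every word in $W_i^\le$ is a $\sigma_i$-term for the join'' via an explicit monotonicity remark rather than by applying Proposition~\ref{P: MW} directly to the join, which is immaterial.
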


\begin{proof} Since $W_1$ is a union of $\sigma_1$-classes and  $W_2$ is a union of $\sigma_2$-classes, the set $W_1 \cup W_2$ is a union
of  $(\sigma_1 \wedge \sigma_2)$-classes.
Since every word in $W^\le_1$ is a $(\sigma_1 \wedge \sigma_2)$-term for $\mathbb M_{\sigma_1 \wedge \sigma_2}(W_1 \cup W_2)$ by Proposition~\ref{P: MW}, it is a $\sigma_1$-term for  $\mathbb M_{\sigma_1 \wedge \sigma_2}(W_1 \cup W_2)$ by Obsevation~\ref{O: twocongr}.
Hence the variety  $\mathbb M_{\sigma_1 \wedge \sigma_2}(W_1 \cup W_2)$ contains $\mathbb M_{\sigma_1}(W_1)$ by Proposition~\ref{P: MW}.
Similar arguments show that  $\mathbb M_{\sigma_1 \wedge \sigma_2}(W_1 \cup W_2)$ contains $\mathbb M_{\sigma_2}(W_2)$.
Thus \eqref{e: product} holds.

Conversely, since the variety  $\mathbb M_{\sigma_1}(W_1) \vee  \mathbb M_{\sigma_2}(W_2)$ contains both $M_{\sigma_1}( {W}_1)$ and  $M_{\sigma_2}( {W}_2)$, every word in $W^\le_1$ is a $\sigma_1$-term for  $\mathbb M_{\sigma_1}(W_1) \vee  \mathbb M_{\sigma_2}(W_2)$
and  every word in $W^\le_2$ is a $\sigma_2$-term for  $\mathbb M_{\sigma_1}(W_1) \vee  \mathbb M_{\sigma_2}(W_2)$.

If $\sigma_1=\sigma_2=\tau$ then every word in $W^\le_1 \cup W^\le_2 = (W_1 \cup W_2)^\le$ is a $\tau$-term for  $\mathbb M_{\tau}(W_1) \vee  \mathbb M_{\tau}(W_2)$.  Therefore, $\mathbb M_{\tau}(W_1) \vee  \mathbb M_{\tau}(W_2) \supseteq \mathbb M_{\tau}(W_1 \cup W_2)$ by  Proposition~\ref{P: MW}. In view of  \eqref{e: product}, the equality \eqref{e: product1} holds.

If $W_1=W_2=W$ then  every word in $W^\le$  is a $\sigma_1$-term for  $\mathbb M_{\sigma_1}(W) \vee  \mathbb M_{\sigma_2}(W)$ and a
 $\sigma_2$-term for  $\mathbb M_{\sigma_1}(W_1) \vee  \mathbb M_{\sigma_2}(W_2)$. Hence  every word in $W^\le$  is a $(\sigma_1 \wedge \sigma_2)$-term for  $\mathbb M_{\sigma_1}(W) \vee  \mathbb M_{\sigma_2}(W)$. Therefore, $\mathbb M_{\sigma_1}(W) \vee  \mathbb M_{\sigma_2}(W) \supseteq  \mathbb M_{\sigma_1 \wedge \sigma_2}(W)$  by  Proposition~\ref{P: MW}. In view of  \eqref{e: product}, the equality \eqref{e: product2} holds.\end{proof}

If $\tau$ is the trivial congruence then  equation~\eqref{e: product1} turns into
Lemma 5.1 in \cite{JS}.

\section{Congruences $\tau_m$ and $\gamma_k$}

Let  $\con_n({\bf u})$ denote the set of all letters that appear $n$ times in a word $\bf u$. 
For each $k \ge 0$ and  every ${\bf u}, {\bf v} \in \mathfrak A^\ast$   define:

\begin{itemize}
\item ${\bf u} \gamma_{k} {\bf v}$ if and only if  $\con_i({\bf u}) = \con_i( {\bf v})$   for each $0 \le i \le k$.

 In particular, 

\item ${\bf u} \gamma_0 {\bf v}$ if and only if $\con({\bf u}) = \con( {\bf v})$;

\item ${\bf u} \gamma_1 {\bf v}$ if and only if $\simp({\bf u}) = \simp( {\bf v})$ and  $\mul({\bf u}) = \mul( {\bf v})$.

\end{itemize}

It is easy to check that  $\gamma_k$ is a congruence on  $\mathfrak A^\ast$  for each $k \ge 0$. 

 For brevity, if $\mathtt w_1,\mathtt w_2,\dots,\mathtt w_k\in\mathfrak A^\ast/ \tau$, then we write $M_\tau(\mathtt w_1,\mathtt w_2,\dots,\mathtt w_k)$ \\ ($\mathbb M_\tau(\mathtt w_1,\mathtt w_2,\dots,\mathtt w_k)$)
rather than $M_\tau(\{\mathtt w_1,\mathtt w_2,\dots,\mathtt w_k\})$ ($\mathbb M_\tau(\{\mathtt w_1,\mathtt w_2,\dots,\mathtt w_k\})$).
The following observation is a reformulation of a well-known and  easily verified statement.

\begin{obs} \label{O: gammak} 

 For each $k \ge 0$, the variety $\mathbb M(a^k)$ satisfies an identity  ${\bf u} \approx {\bf v}$ if and only if  ${\bf u} \gamma_k {\bf v}$.

\end{obs}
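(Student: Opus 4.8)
For each $k \ge 0$, the variety $\mathbb M(a^k)$ satisfies an identity $\mathbf{u} \approx \mathbf{v}$ if and only if $\mathbf{u}\, \gamma_k\, \mathbf{v}$.

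The plan is to invoke the standard fact that the variety generated by a single algebra satisfies precisely the identities that hold in that algebra, so it is enough to show that the monoid $M(a^k)=M(\{a^k\})$ satisfies $\mathbf u\approx\mathbf v$ if and only if $\mathbf u\,\gamma_k\,\mathbf v$. Recall that $\{a^k\}^{\le}=\{1,a,a^2,\dots,a^k\}$, so $M(a^k)=\{a^0,a^1,\dots,a^k,0\}$ (with $a^0=1$) and $\star$ is the commutative operation with $a^i\star a^j=a^{i+j}$ if $i+j\le k$ and $a^i\star a^j=0$ otherwise; the case $k=0$, where $M(1)$ is the two-element semilattice monoid and $\gamma_0$ reads $\con(\mathbf u)=\con(\mathbf v)$, is covered by the same argument. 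For a substitution $\varphi\colon\mathfrak A\to M(a^k)$ write $\varphi(x)=a^{e_x}$ with $0\le e_x\le k$ when $\varphi(x)\ne 0$, and let $m_x$ and $n_x$ be the numbers of occurrences of $x$ in $\mathbf u$ and in $\mathbf v$, respectively.

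For the direction ``$\mathbf u\,\gamma_k\,\mathbf v$ implies $M(a^k)\models\mathbf u\approx\mathbf v$'', fix such a $\varphi$. If $\varphi(x)=0$ for some $x\in\con(\mathbf u)=\con(\mathbf v)$, then both products are $0$ since $0$ is absorbing. Otherwise $\varphi(\mathbf u)=a^{\sum_x m_xe_x}$ and $\varphi(\mathbf v)=a^{\sum_x n_xe_x}$, with the convention that an exponent exceeding $k$ denotes $0$. From $\mathbf u\,\gamma_k\,\mathbf v$ we get $m_x=n_x$ for every letter with $m_x\le k$, and $\{x:m_x>k\}=\{x:n_x>k\}$. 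Splitting both sums along these two classes of letters, the parts coming from letters of multiplicity $\le k$ coincide; and either some letter $x$ with $m_x>k$ has $e_x\ge 1$, in which case both exponents exceed $k$ and both sides equal $0$, or no such letter contributes, in which case the two sums are equal. Either way $\varphi(\mathbf u)=\varphi(\mathbf v)$.

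For the converse, argue by contraposition: if $\con_i(\mathbf u)\ne\con_i(\mathbf v)$ for some $0\le i\le k$, pick a letter $x$ lying in exactly one of these sets; interchanging $\mathbf u$ and $\mathbf v$ if needed, assume $x\in\con_i(\mathbf u)\setminus\con_i(\mathbf v)$, so $m_x=i\le k$ and $n_x=j\ne i$. Let $\varphi$ send $x$ to $a$ and every other letter to $1$; then $\varphi(\mathbf u)=a^i$, a nonzero element of $M(a^k)$ because $i\le k$, while $\varphi(\mathbf v)$ equals $a^j$ if $j\le k$ and $0$ if $j>k$, hence $\varphi(\mathbf u)\ne\varphi(\mathbf v)$ in all cases and $M(a^k)$ fails $\mathbf u\approx\mathbf v$. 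The one genuinely substantive point is the bookkeeping in the second paragraph for letters of multiplicity greater than $k$: their exact occurrence counts need not agree under $\gamma_k$, but each of them collapses to the same result once it is multiplied by a nonzero exponent, so it cannot be used to separate $\mathbf u$ from $\mathbf v$; the rest is routine.
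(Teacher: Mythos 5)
Your proof is correct and complete: the paper itself offers no argument for this observation (it is stated as ``a reformulation of a well-known and easily verified statement''), and your direct computation with the monoid $M(a^k)=\{1,a,\dots,a^k,0\}$ is precisely the routine verification the paper has in mind. The one point that needed care --- that letters occurring more than $k$ times in $\mathbf u$ and $\mathbf v$ may have different multiplicities but force both values to $0$ as soon as they receive a nonidentity value --- is handled correctly in your second paragraph.
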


For each $m \ge 0$, let $\tau_{m}$ denote the congruence on ${\mathfrak A}^*$ induced by the relations $a=a^{1+m}$ for every $a \in{\mathfrak A}$. In particular,  $\tau_0$ is the trivial congruence on  ${\mathfrak A}^*$.

In view of  Observation~\ref{O: gammak}, the congruence $\gamma_k$ is fully invariant  for each $k \ge 0$.
In contrast,  if $m \ge 1$ then  $\tau_m$ is not fully invariant.

If ${\mathbf u} \in \mathfrak A^*$ and $x \in \con({\bf u})$ then an {\em island} formed by $x$  in {\bf u} is a maximal subword of $\bf u$ which is a positive power of $x$. For example, the word $xy^2x^5yx^3$ has  three islands formed by $x$ and two islands formed by $y$.
  Clearly, two words $\bf u$ and $\bf v$ are $\tau_1$-related if and only if $\bf v$ can be obtained from $\bf u$ by changing the
exponents of the  {\em corresponding}  islands. For example, ${\bf u} =(xy^2x^5yx^3) \tau_1 (x^2yx^3y^4x^7)={\bf v}$ and the islands $x^5$  in $\bf u$ and $x^3$ in $\bf v$ are  {\em corresponding}.

If a monoid $M$ (monoid variety $\vv$) satisfies all identities in a set $\Sigma$ then we write $M \models \Sigma$ ($\vv \models \Sigma$).

\begin{lemma} \label{L: tau}  For each $m\ge 0$, every subword of a $\tau_m$-term  for a monoid variety $\vv$ is  a $\tau_m$-term for $\vv$.

\end{lemma}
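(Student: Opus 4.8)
The plan is to reduce everything to the definition of a $\tau_m$-term and then exploit the fact that $\tau_m$ respects concatenation, so that an identity forced by a subword can be ``contextualised'' back to the full word.

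First I would set up notation: let $\mathbf u$ be a $\tau_m$-term for $\vv$ and write $\mathbf u = \mathbf p \mathbf w \mathbf s$ where $\mathbf w$ is an arbitrary subword of $\mathbf u$; I want to show $\mathbf w$ is a $\tau_m$-term for $\vv$. So suppose $\vv$ satisfies an identity $\mathbf w \approx \mathbf w'$; I must show $\mathbf w \,\tau_m\, \mathbf w'$. Using fresh letters not occurring in $\mathbf p, \mathbf w, \mathbf s, \mathbf w'$ would be the cleanest move, but actually $\mathbf p$ and $\mathbf s$ already play that role of ``context''. Since $\vv \models \mathbf w \approx \mathbf w'$ and identities are closed under multiplication by fixed words on either side, $\vv \models \mathbf p \mathbf w \mathbf s \approx \mathbf p \mathbf w' \mathbf s$, i.e. $\vv \models \mathbf u \approx \mathbf p \mathbf w' \mathbf s$. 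Because $\mathbf u$ is a $\tau_m$-term for $\vv$, this yields $\mathbf u = \mathbf p \mathbf w \mathbf s \,\tau_m\, \mathbf p \mathbf w' \mathbf s$.

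The second step is to descend from $\mathbf p \mathbf w \mathbf s \,\tau_m\, \mathbf p \mathbf w' \mathbf s$ back to $\mathbf w \,\tau_m\, \mathbf w'$. This is where the combinatorial description of $\tau_m$ via islands (for $m=1$) or via the relations $a = a^{1+m}$ enters. Here I would be careful: a naive ``cancellation'' is false for arbitrary words, which is exactly why the substitution by fresh letters matters. The honest route is: for each letter $x$, replace every occurrence of $x$ in $\mathbf p$ and in $\mathbf s$ by a distinct brand-new letter (distinct per occurrence), obtaining $\hat{\mathbf p}$ and $\hat{\mathbf s}$ in which no letter of $\con(\mathbf w) \cup \con(\mathbf w')$ appears and no letter repeats. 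Since $\vv \models \mathbf w \approx \mathbf w'$ implies $\vv \models \hat{\mathbf p}\mathbf w\hat{\mathbf s} \approx \hat{\mathbf p}\mathbf w'\hat{\mathbf s}$, and a $\tau_m$-term stays a $\tau_m$-term under renaming of letters, one gets $\hat{\mathbf p}\mathbf w\hat{\mathbf s} \,\tau_m\, \hat{\mathbf p}\mathbf w'\hat{\mathbf s}$. Now in these words the letters of $\hat{\mathbf p}$ and $\hat{\mathbf s}$ form their own singleton islands that cannot merge with anything from $\mathbf w$ or $\mathbf w'$, so the $\tau_m$-relation restricts cleanly to the middle blocks: $\mathbf w \,\tau_m\, \mathbf w'$.

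I expect the main obstacle to be exactly this last descent — making rigorous that the island structure of the prefix and suffix is ``insulated'' from the middle, so that $\tau_m$-equivalence of the full words forces $\tau_m$-equivalence of the middle factors. Concretely one must check that in $\hat{\mathbf p}\mathbf w\hat{\mathbf s}$ the maximal power-of-$x$ blocks lying inside $\mathbf w$ are precisely the islands of $\hat{\mathbf p}\mathbf w\hat{\mathbf s}$ formed by $x$ for $x \in \con(\mathbf w)$, because $x$ does not occur in $\hat{\mathbf p}$ or $\hat{\mathbf s}$ by construction; the same holds on the $\mathbf w'$ side. Matching up corresponding islands in the full $\tau_m$-equivalence then restricts to a bijection of islands between $\mathbf w$ and $\mathbf w'$ compatible with the $a = a^{1+m}$ exponent relations, which is the definition of $\mathbf w \,\tau_m\, \mathbf w'$. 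One should also note the trivial base case $m = 0$, where $\tau_0$ is equality and the statement is immediate from the substitution argument. This completes the proof.
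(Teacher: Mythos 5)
Your setup and your first move (from $\vv\models\mathbf{w}\approx\mathbf{w}'$ deduce $\vv\models\mathbf{p}\mathbf{w}\mathbf{s}\approx\mathbf{p}\mathbf{w}'\mathbf{s}$ and hence $\mathbf{u}\;\tau_m\;\mathbf{p}\mathbf{w}'\mathbf{s}$) agree with the paper, and your instinct that one cannot simply cancel $\mathbf{p}$ and $\mathbf{s}$ is right: e.g.\ $y\cdot x\;\tau_1\;y\cdot yx$ while $x$ and $yx$ are not $\tau_1$-related. But the repair you propose has a genuine gap at its central step. To get $\hat{\mathbf{p}}\mathbf{w}\hat{\mathbf{s}}\;\tau_m\;\hat{\mathbf{p}}\mathbf{w}'\hat{\mathbf{s}}$ from $\vv\models\hat{\mathbf{p}}\mathbf{w}\hat{\mathbf{s}}\approx\hat{\mathbf{p}}\mathbf{w}'\hat{\mathbf{s}}$ you need $\hat{\mathbf{p}}\mathbf{w}\hat{\mathbf{s}}$ to be a $\tau_m$-term for $\vv$, and you justify this by ``a $\tau_m$-term stays a $\tau_m$-term under renaming of letters.'' That is true for a bijective renaming, but your operation replaces each \emph{occurrence} of a context letter by a distinct fresh letter, i.e.\ it splits one letter into several. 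This is not a renaming of $\mathbf{u}$; it is the inverse of a non-injective substitution $\Theta$, and the $\tau_m$-term property only transports in the forward direction of substitutions. Concretely, from $\vv\models\hat{\mathbf{p}}\mathbf{w}\hat{\mathbf{s}}\approx\mathbf{X}$ you can only conclude $\vv\models\mathbf{u}\approx\Theta(\mathbf{X})$ and hence $\mathbf{u}\;\tau_m\;\Theta(\mathbf{X})$, which says nothing about $\hat{\mathbf{p}}\mathbf{w}\hat{\mathbf{s}}\;\tau_m\;\mathbf{X}$ (for instance $\mathbf{X}$ could transpose two fresh letters $z_1z_2\mapsto z_2z_1$ that $\Theta$ identifies, destroying $\tau_1$-relatedness while $\Theta(\mathbf{X})=\mathbf{u}$). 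Worse, by your own insulation observation, for the specific identity at hand the assertion $\hat{\mathbf{p}}\mathbf{w}\hat{\mathbf{s}}\;\tau_m\;\hat{\mathbf{p}}\mathbf{w}'\hat{\mathbf{s}}$ is \emph{equivalent} to the goal $\mathbf{w}\;\tau_m\;\mathbf{w}'$, so the step is circular: the hypothesis of the lemma concerns the word $\mathbf{u}$ with its actual repeated context letters and gives you no control over the freshened word.

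The paper avoids this by never altering the context. It argues the contrapositive: assuming $\mathbf{w}$ is not a $\tau_m$-term, take $\vv\models\mathbf{w}\approx\mathbf{w}'$ with $\mathbf{w}$ and $\mathbf{w}'$ not $\tau_m$-related, extract an explicit combinatorial witness of non-relatedness (for $\tau_1$, a first position where the two words diverge in a letter; for $m>1$, after reducing to the case $\mathbf{w}\;\tau_1\;\mathbf{w}'$, a pair of corresponding islands $x^p$, $x^q$ with $p\not\equiv q\pmod m$), and check that this witness survives when the \emph{same, original} prefix and suffix are glued onto both sides, so that $\mathbf{u}=\mathbf{p}\mathbf{w}\mathbf{s}$ and $\mathbf{p}\mathbf{w}'\mathbf{s}$ are still not $\tau_m$-related, contradicting that $\mathbf{u}$ is a $\tau_m$-term. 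If you want to salvage your architecture, that persistence-of-a-witness statement (which uses that the padding is identical on both sides, not that it is fresh) is what you must prove in place of the freshening step.
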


\begin{proof}  It is well-known and can be easily verified that every subword of an isoterm ($\tau_0$-term) for $\vv$ is  an isoterm for $\vv$. 
Similar argument works to show that every subword of a $\tau_1$-term  ${\mathbf u}$ for $\vv$ is  a $\tau_1$-term for $\vv$. Indeed,
let  ${\mathbf v}$ be a subword of  ${\mathbf u}$.
  If $\bf v$ is not a $\tau_1$-term for $\vv$ then ${\vv} \models {\bf v} \approx {\bf w}$ such that ${\bf v}$ and  ${\bf w}$ are not $\tau_1$-related. This means that  ${\bf v} ={\bf p}a{\bf s}$
and  ${\bf w} ={\bf p}'b{\bf s}'$ where  ${\mathbf p}$, ${\mathbf s}$, ${\mathbf p}'$ and ${\mathbf s}'$ are possibly empty words,
${\bf p} \tau_1 {\bf p}'$ and $a \ne b \in \mathfrak A$.
Since  ${\bf v} \le {\bf u}$ we have  ${\bf u} ={\bf c vd}$ for some possibly empty words ${\bf c}$ and $\bf d$. 
Then ${\vv} \models ({\bf u} = {\bf c}{\bf p}a{\bf sd}) \approx   {\bf c}{\bf p}'b{\bf s}'{\bf d}$. Since ${\bf u}$ and  ${\bf c}{\bf p}'b{\bf s}'{\bf d}$ are not $\tau_1$-related, we obtained a contradiction to the fact that
$\bf u$ is a $\tau_1$-term for $\vv$.  Therefore, $\bf v$  must be a $\tau_1$-term for $\vv$.

 If $m >1$ and $\bf u$  is a $\tau_m$-term for a monoid variety $\vv$, then $\bf u$  is a $\tau_1$-term for $\vv$ by Observation~\ref{O: twocongr}. It follows that every subword  ${\mathbf v}$  of  ${\mathbf u}$ is a  $\tau_1$-term for $\vv$. If  ${\mathbf v}$ is not a  $\tau_m$-term for $\vv$ then  ${\vv} \models {\bf v} \approx {\bf w}$ such that ${\bf v}$ and  ${\bf w}$ are not $\tau_m$-related. Since ${\bf v} \tau_1 {\bf w}$ but ${\bf v} \not \tau_m {\bf w}$,  for some letter $x \in \mathfrak A$, the corresponding islands $x^p$ and $x^q$ in $\bf v$ and $\bf w$ are such that $p \not \equiv q  \pmod m$.  Since  ${\bf v} \le {\bf u}$ we have  ${\bf u} ={\bf c vd}$ for some possibly empty words ${\bf c}$ and $\bf d$. 
Then ${\vv} \models {\bf u}  \approx   {\bf c}{\bf w}{\bf d}$. 
Since the islands $x^p$ and $x^q$ in $\bf v$ and $\bf w$ are corresponding in  ${\bf u}$ and  ${\bf c}{\bf w}{\bf d}$ as well, the words
 ${\bf u}$ and  ${\bf c}{\bf w}{\bf d}$ are not $\tau_m$-related. This is impossible, because
$\bf u$ is a $\tau_m$-term for $\vv$. To avoid a contradiction, we conclude that $\bf v$ is also a $\tau_m$-term for $\vv$.
\end{proof}

\begin{lemma} \label{twoletters}  Let $m, k \ge 0$ and $\vv$ be a monoid variety such that $x^k$ is an isoterm for $\vv$. Then every subword of a $(\tau_m \wedge \gamma_k)$-term for  $\vv$ is  a  $(\tau_m \wedge \gamma_k)$-term for $\vv$.

\end{lemma}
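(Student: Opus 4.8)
The plan is to imitate the proof of Lemma~\ref{L: tau}, using the hypothesis on $x^k$ to dispose of the $\gamma_k$-component essentially for free. First I would establish the following consequence of the hypothesis: if $x^k$ is an isoterm for $\vv$, then \emph{every} word of $\mathfrak A^\ast$ is a $\gamma_k$-term for $\vv$. To see this, note that the subwords $x^0,x^1,\dots,x^{k-1}$ of $x^k$ are isoterms for $\vv$ as well (the $m=0$ case of Lemma~\ref{L: tau}, i.e.\ the well-known fact quoted at the start of its proof), so $\vv$ contains the monoid $M(\{a^k\})$ by Fact~\ref{prec} applied with the trivial congruence; hence $\mathbb M(a^k)\subseteq\vv$, and any identity ${\bf u}\approx{\bf v}$ of $\vv$ is satisfied by $\mathbb M(a^k)$, which by Observation~\ref{O: gammak} forces ${\bf u}\,\gamma_k\,{\bf v}$. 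Thus no word of $\mathfrak A^\ast$ can fail to be a $\gamma_k$-term for $\vv$.

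Next I would record the routine fact that a word ${\bf u}$ is a $(\tau_m\wedge\gamma_k)$-term for $\vv$ precisely when it is simultaneously a $\tau_m$-term and a $\gamma_k$-term for $\vv$: the forward direction is Observation~\ref{O: twocongr}, since $\tau_m\wedge\gamma_k$ partitions each of $\tau_m$ and $\gamma_k$; the backward direction is immediate from the definition, because $\vv\models{\bf u}\approx{\bf v}$ then yields both ${\bf u}\,\tau_m\,{\bf v}$ and ${\bf u}\,\gamma_k\,{\bf v}$, hence ${\bf u}\,(\tau_m\wedge\gamma_k)\,{\bf v}$.

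With these two facts in hand the proof is short. Let ${\bf u}$ be a $(\tau_m\wedge\gamma_k)$-term for $\vv$ and let ${\bf v}$ be a subword of ${\bf u}$. By the second fact, ${\bf u}$ is a $\tau_m$-term for $\vv$, hence so is ${\bf v}$ by Lemma~\ref{L: tau}; and ${\bf v}$ is automatically a $\gamma_k$-term for $\vv$ by the first fact. Applying the second fact once more, ${\bf v}$ is a $(\tau_m\wedge\gamma_k)$-term for $\vv$, as required.

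The only step needing any care is the first one, where one must check that ``$x^k$ is an isoterm for $\vv$'' genuinely forces $\vv\supseteq\mathbb M(a^k)$; this rests on the subwords $x^i$ with $i<k$ being isoterms, which is exactly the $m=0$ instance of Lemma~\ref{L: tau}, so nothing new is needed (the degenerate cases $k=0$ or $m=0$ present no difficulty). A more hands-on argument mirroring the body of the proof of Lemma~\ref{L: tau}---writing ${\bf v}={\bf p}a{\bf s}$ and ${\bf u}={\bf c}{\bf v}{\bf d}$ and tracking both islands and content levels in ${\bf u}$ versus ${\bf c}{\bf w}{\bf d}$---is also available, but the route through Observation~\ref{O: gammak} is cleaner.
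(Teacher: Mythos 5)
Your proposal is correct and follows essentially the same route as the paper's proof: the paper likewise reduces the $\tau_m$-component to Lemma~\ref{L: tau} via Observation~\ref{O: twocongr} and disposes of the $\gamma_k$-component by invoking Observation~\ref{O: gammak} together with the hypothesis that $x^k$ is an isoterm (handling $k=0$ by noting $\tau_m\wedge\gamma_0=\tau_m$). Your write-up merely makes explicit the intermediate step that the isoterm hypothesis forces $\mathbb M(a^k)\subseteq\vv$ and hence that every word is a $\gamma_k$-term, which the paper leaves implicit.
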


\begin{proof} Since ${\bf u} \gamma_{0} {\bf v}$ if and only if  $\con({\bf u}) = \con( {\bf v})$, the congruence
$\tau_m$ partitions $\gamma_0$ for each $m \ge 0$. Hence  $\tau_m = \tau_m \wedge \gamma_0$ and the statement follows from
Lemma~\ref{L: tau}. So, we may assume that $k \ge 1$.

 Suppose that ${\bf v} \le {\bf u}$ and $\bf u$ is a $(\tau_m \wedge \gamma_k)$-term for $\vv$. In view of Observation~\ref{O: twocongr} and Lemma~\ref{L: tau}, the word  $\bf v$ is a  $\tau_m$-term for $\vv$. Since  $x^k$ is an isoterm for $\vv$,  the word  $\bf v$ is a  $(\tau_m\wedge \gamma_k)$-term for $\vv$ by Observation~\ref{O: gammak}.
\end{proof}

Notice that  $\gamma = \tau_1 \wedge \gamma_1$, where $\gamma$ is the equivalence relation defined in the introduction.

\begin{fact} \label{F: gamma} Let $\mathbf  u$ be a word that involves at least two distinct letters.
If $\mathbf u$ is a $\gamma$-term for a monoid variety $\vv$ then $x$ is an isoterm for $\vv$.
\end{fact}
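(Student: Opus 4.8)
The plan is to prove the contrapositive: assuming $x$ is not an isoterm for $\vv$, I will exhibit an identity $\mathbf u\approx\mathbf u'$ of $\vv$ with $\mathbf u$ and $\mathbf u'$ lying in different $\tau_1$-classes; since $\gamma=\tau_1\wedge\gamma_1$ partitions $\tau_1$, Observation~\ref{O: twocongr} then shows that $\mathbf u$ is not a $\gamma$-term for $\vv$, which is what we want. The first step is the standard reduction to power identities: a one-letter word $x$ fails to be an isoterm for a monoid variety exactly when $\vv\models x\approx x^{n}$ for some $n\ge 2$. Indeed, if $\vv\models x\approx\mathbf w$ with $\mathbf w\ne x$, then substituting $x$ (if $x\in\con(\mathbf w)$) or $1$ (otherwise) for every letter of $\mathbf w$ different from $x$ yields $\vv\models x\approx x^{N}$ with $N\ge 2$; the converse is clear. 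So fix $n\ge 2$ with $\vv\models x\approx x^{n}$ and suppose, for contradiction, that $\mathbf u$ is a $\gamma$-term for $\vv$.

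Since $\mathbf u$ involves two distinct letters, it has two adjacent positions carrying distinct letters; write $\mathbf u=\mathbf p\,cd\,\mathbf q$ with $c\ne d$ and $\mathbf p,\mathbf q$ possibly empty. Substituting $x\mapsto cd$ into $x\approx x^{n}$ gives $\vv\models cd\approx(cd)^{n}$, and multiplying by $\mathbf p$ on the left and $\mathbf q$ on the right yields
\[
\vv\ \models\ \mathbf u=\mathbf p\,(cd)\,\mathbf q\ \approx\ \mathbf p\,(cd)^{n}\,\mathbf q=:\mathbf u'.
\]
The word $\mathbf u'$ is obtained from $\mathbf u$ by inserting $(cd)^{n-1}$ right after the displayed factor $cd$; as $(cd)^{n-1}$ is nonempty, starts with $c$, ends with $d$, and alternates, it introduces exactly $n-1$ new maximal $c$-blocks that do not collapse with anything (each is flanked by $d$'s). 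Hence $\mathbf u$ and $\mathbf u'$ have different island skeletons, i.e.\ $\mathbf u\not\mathrel{\tau_1}\mathbf u'$, contradicting that $\mathbf u$ is a $\gamma$-term. Therefore $x$ is an isoterm for $\vv$.

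The argument is short and the only delicate point is the claim that $\mathbf u$ and $\mathbf u'$ lie in different $\tau_1$-classes: occurrence counts of $c$ will not do, since changing island exponents is precisely what $\tau_1$ permits, so one must track the sequence of islands (the word with all exponents deleted) and observe that inserting $(cd)^{n-1}$ genuinely lengthens it. It is also worth noting that the two-letter hypothesis is essential: without a factor $cd$ with $c\ne d$ the substitution above is unavailable, and indeed a power of a single letter can be a $\gamma$-term in a variety for which $x$ is not an isoterm (for instance $a^{2}$ in a variety satisfying $x\approx x^{3}$ but not $x\approx x^{2}$).
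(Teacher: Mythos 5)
Your proof is correct and follows essentially the same route as the paper: reduce the failure of $x$ to be an isoterm to an identity $x \approx x^{n}$ with $n \ge 2$, then use it to produce an identity of $\vv$ that moves $\mathbf u$ out of its $\tau_1$-class, contradicting (via Observation~\ref{O: twocongr}) that $\mathbf u$ is a $\gamma$-term. The only difference is the choice of witness: the paper substitutes $\mathbf u$ itself for $x$ to obtain $\vv \models \mathbf u \approx \mathbf u^{n}$, which is not a $\tau_1$-identity precisely because $\mathbf u$ involves two distinct letters, whereas you substitute a two-letter factor $cd$ of $\mathbf u$ and track the island count; both work.
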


\begin{proof} Since  $\mathbf u$ is a $\gamma$-term for $\vv$, the word $\mathbf u$ is also a $\tau_1$-term for $\vv$ by  Observation~\ref{O: twocongr}.
 If $x$ is not  an isoterm for $\vv$ then $\vv \models x \approx x^p$ for some $p \ge 2$.
But then  $\vv \models {\bf u} \approx {\bf u}^p$ which contradicts the fact
that  ${\mathbf u}$ is a $\tau_1$-term for  ${\vv}$.
Therefore,  $x$ must be an isoterm for $\vv$.
\end{proof}

\begin{cor} \label{C: taum}  Let $m \ge 0$, $\tau \in \{\tau_m, \gamma\}$ and  ${\mathtt v} \le_{\tau} {\mathtt u}$. If $\mathtt u$ is a $\tau$-term  for a monoid variety $\vv$ then ${\mathtt v}$ is also a $\tau$-term for $\vv$.

\end{cor}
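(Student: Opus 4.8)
The plan is to fix an arbitrary word $\mathbf v \in \mathtt v$ and prove that it is a $\tau$-term for $\vv$; since $\mathbf v$ is arbitrary, this gives that $\mathtt v$ is a $\tau$-term. By Lemma~\ref{subword}, from $\mathtt v \le_\tau \mathtt u$ we obtain a word $\mathbf u \in \mathtt u$ having $\mathbf v$ as a subword, and $\mathbf u$ is a $\tau$-term for $\vv$ because $\mathtt u$ is. Thus everything comes down to the statement: every subword of a $\tau$-term for $\vv$ is again a $\tau$-term for $\vv$, for $\tau \in \{\tau_m,\gamma\}$.

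For $\tau = \tau_m$ this is precisely Lemma~\ref{L: tau}, so nothing more is needed there. For $\tau = \gamma$ I would invoke the identity $\gamma = \tau_1 \wedge \gamma_1$ together with Lemma~\ref{twoletters} in the case $m = k = 1$: that lemma says every subword of a $(\tau_1 \wedge \gamma_1)$-term for $\vv$ is a $(\tau_1 \wedge \gamma_1)$-term for $\vv$, provided that $x = x^1$ is an isoterm for $\vv$. So the only remaining task is to extract the fact that $x$ is an isoterm for $\vv$ from the hypothesis that the $\gamma$-class $\mathtt u$ is a $\gamma$-term for $\vv$.

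To do that I would split according to the content of $\mathtt u$ (all words in a $\gamma$-class share the same set of letters). If some word of $\mathtt u$ involves at least two distinct letters, then, being a $\gamma$-term for $\vv$, it forces $x$ to be an isoterm for $\vv$ by Fact~\ref{F: gamma}. If every word of $\mathtt u$ involves at most one letter, then $\mathtt u = \{a\}$ or $\mathtt u = \{a^n : n \ge 2\}$ for some letter $a$. In the first case $a$ is a $\gamma$-term, and since its $\gamma$-class is the singleton $\{a\}$ this means exactly that $a$ is an isoterm, so $x$ is an isoterm. In the second case I would argue by contradiction: if $x$ were not an isoterm for $\vv$, then $\vv \models x \approx x^p$ for some $p \ge 2$ (exactly as used in the proof of Fact~\ref{F: gamma}), hence $\vv \models a^p \approx a$; but $a^p \in \mathtt u$, while $a^p$ and $a$ have different sets of simple letters and so are not $\gamma$-related, contradicting the assumption that $\mathtt u$ is a $\gamma$-term. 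In either case $x$ is an isoterm for $\vv$, Lemma~\ref{twoletters} applies, and $\mathbf v$ is a $\gamma$-term for $\vv$, completing the argument.

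The one delicate point I expect is the last, degenerate case, in which $\mathtt u$ consists of the powers of a single letter: there Fact~\ref{F: gamma} is not available, and one must genuinely use that the \emph{whole} class $\mathtt u$ (not merely one of its words) is a $\gamma$-term. Everything else is a routine assembly of Lemmas~\ref{subword}, \ref{L: tau}, \ref{twoletters} and Fact~\ref{F: gamma}.
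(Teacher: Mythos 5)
Your proposal is correct and takes essentially the same route as the paper's own proof: the same reduction to subwords via Lemma~\ref{subword}, the same dispatch of $\tau=\tau_m$ by Lemma~\ref{L: tau}, the same case split on the content of $\mathtt u$ for $\tau=\gamma$ using Lemma~\ref{twoletters} and Fact~\ref{F: gamma}, and the same direct argument that $x$ is an isoterm when $\mathtt u=\{x^2,x^3,\dots\}$ (the paper phrases it as ``no word in this $\gamma$-class can form an identity of $\vv$ with $x$''). The only thing you omit is the degenerate case $\mathtt u=\{1\}$, which the paper handles in one line and which is immediate since it forces $\mathtt v=\{1\}$.
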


\begin{proof} Since $\mathtt u$  is a $\tau$-term for $\vv$,  every word in $\mathtt u$ is a $\tau$-term for $\vv$. Since 
${\mathtt v} \le_{\tau} {\mathtt u}$, every word ${\bf v} \in \mathtt v$ is a subword of some word  ${\bf u} \in \mathtt u$ by Lemma~\ref{subword}.

 If $\tau = \tau_m$, then  
$\bf v$ is a $\tau_m$-term for $\vv$ by Lemma~\ref{L: tau}.

So, we may assume that $\tau=\gamma$.  If $\bf u$ involves at least two distinct letters, then $\bf v$ is a $\gamma$-term for $\vv$ by Lemma~\ref{twoletters} and Fact~\ref{F: gamma}.
Thus we may assume that $\mathtt u$ involves at most one letter $x$.

If $\mathtt u = \{1\}$ then $\mathtt{v} = \{1\}$  is a $\gamma$-term for $\vv$.

If $\mathtt{u} = \{x\}$  then $x$ is an isoterm for $\vv$.  Hence $\mathbf v$ is a $\gamma$-term for $\vv$ by Lemma~\ref{twoletters}.

It is left to assume that  $\mathtt{u} =\{x^2, x^3, x^4, \dots\}$  for some $x \in \mathfrak A$. Since no word in this $\gamma$-class can form an identity of $\vv$ with $x$, the word $x$ is an isoterm for $\vv$ in this case as well.  Hence $\mathbf v$ is a $\gamma$-term for $\vv$ by Lemma~\ref{twoletters}.
\end{proof}

\begin{cor} \label{C: MW1}  Let $m \ge 0$, $\tau \in \{\tau_m, \gamma\}$ and $\mathtt{W} \subseteq \mathfrak A^*/\tau$ be a set of $\tau$-words. Then a monoid variety $\vv$ contains  $M_\tau(\mathtt {W})$ if and only if every $\tau$-word in $\mathtt W$ is a $\tau$-term for $\vv$.
\end{cor}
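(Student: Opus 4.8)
The plan is to reduce the statement to Proposition~\ref{P: MW} together with Corollary~\ref{C: taum}. First I would check that for $\tau \in \{\tau_m, \gamma\}$ the empty word $1$ forms a singleton $\tau$-class, so that Proposition~\ref{P: MW} is applicable. For $\tau_m$ this is immediate, since the defining relations $a = a^{1+m}$ only rewrite nonempty words into nonempty words, hence nothing is $\tau_m$-related to $1$ except $1$ itself; for $\gamma = \tau_1 \wedge \gamma_1$ it follows at once because $\gamma$ refines $\tau_1$ and $1$ is a singleton $\tau_1$-class.

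Next, given $\mathtt W \subseteq \mathfrak A^*/\tau$, I would pass to the set of words $W = \bigcup_{\mathtt u \in \mathtt W} \mathtt u \subseteq \mathfrak A^*$, i.e.\ the union of the corresponding $\tau$-classes. By construction $W$ is closed under $\tau$ and $H_\tau(W) = \mathtt W$, so $M_\tau(W) = M_\tau(\mathtt W)$ and Proposition~\ref{P: MW} (equivalence of (i) and (iii)) yields: $\vv$ contains $M_\tau(\mathtt W)$ if and only if every $\tau$-word in $\mathtt W^{\le_\tau}$ is a $\tau$-term for $\vv$. It then remains to see that this last condition is equivalent to ``every $\tau$-word in $\mathtt W$ is a $\tau$-term for $\vv$''. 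One direction is trivial, since $\le_\tau$ is reflexive (take $\mathtt p = \mathtt s = \{1\}$ in its definition), so $\mathtt W \subseteq \mathtt W^{\le_\tau}$. For the converse, I would take an arbitrary $\mathtt v \in \mathtt W^{\le_\tau}$, choose $\mathtt u \in \mathtt W$ with $\mathtt v \le_\tau \mathtt u$, and apply Corollary~\ref{C: taum}, which says precisely that a $\le_\tau$-``divisor'' of a $\tau$-term is again a $\tau$-term; this is the only place where the hypothesis $\tau \in \{\tau_m, \gamma\}$ is used. Chaining the two equivalences gives the claim.

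I do not anticipate a genuine obstacle here: the corollary is essentially Proposition~\ref{P: MW} with $\mathtt W^{\le_\tau}$ replaced by $\mathtt W$, and that replacement is licensed exactly by Corollary~\ref{C: taum}. The only point demanding a (routine) argument is the singleton-$\tau$-class check needed to invoke Proposition~\ref{P: MW}.
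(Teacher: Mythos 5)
Your proposal is correct and follows essentially the same route as the paper: both directions are reduced to Proposition~\ref{P: MW}, with Corollary~\ref{C: taum} supplying exactly the step that lets one replace $\mathtt W^{\le_\tau}$ by $\mathtt W$. The only difference is that you explicitly verify the singleton-class hypothesis of Proposition~\ref{P: MW} and the identification $M_\tau(W)=M_\tau(\mathtt W)$, which the paper leaves implicit; these checks are correct and harmless.
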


\begin{proof}  If $\vv$ contains  $M_\tau(\mathtt W)$ then every $\tau$-word in ${\mathtt W}^{\le_\tau}$ is a $\tau$-term for $\vv$ by Proposition~\ref{P: MW}.
In particular,  every $\tau$-word in $\mathtt W$ is a $\tau$-term for $\vv$. 

Conversely, suppose that  every $\tau$-word in $\mathtt W$ is a $\tau$-term for $\vv$.
If ${\mathtt  v} \in \mathtt W^{\le_\tau}$  then ${\mathtt v} \le_\tau {\mathtt u}$ for some $\mathtt u \in \mathtt W$.
Consequently,  $\mathtt v$  is a $\tau$-term for $\vv$  by Corollary~\ref{C: taum}.
Therefore,  $\vv$ contains  $M_\tau(\mathtt {W})$ by Proposition~\ref{P: MW}.
\end{proof}

Notice that if $\tau = \tau_0$ then Corollary~\ref{C: MW1} coincides with Lemma 3.3 in \cite{MJ}.

\section{Every subvariety of $\mathbb A \vee \overline{\mathbb A}$ is generated by a monoid of the form $M_{\gamma}(W)$}  \label{sec:zl}

In consistence with notations used in the introduction,  define  
\begin{itemize}
\item $\mathtt {a} = \{a\}$;
\item $\mathtt {a}^{n+} = \{a^n, a^{n+1},  a^{n+2}, \dots\}$ for each $n \ge 1$ and $a \in \mathfrak A$.
\end{itemize}

We often simplify the following notations: 
\begin{itemize}

\item  instead of  $\mathtt {a}^{2+}$ we write  $\mathtt {a}^{+}$, that is,  $\mathtt {a}^{+} = \mathtt {a}^{2+} =\{a^2, a^{3},  \dots\}$;

\item  when the congruence $\tau$ is clear from the context, we omit the sign  of operation  ''$\circ_\tau$''  in  $\mathfrak A^*/\tau$, that is, 
we write $\mathtt w_1 \mathtt w_2$ instead of  $\mathtt w_1 \circ_\tau \mathtt w_2$. 
\end{itemize}

 If $M$ and $M'$ are isomorphic monoids we write $M \cong M'$. 
 Figure~\ref{pic: A} below duplicates  Figure~1 in \cite{ZL} which exhibits the subvariety lattice of  $\mathbb A \vee \overline{\mathbb A}$.
The four-element chain at the bottom of this lattice is the same as the corresponding chain in Figure 1 in \cite{MJ}:
\[\mathbb M(\emptyset) \subset \mathbb M(1) \subset \mathbb M(a)  \subset \mathbb M(ab).\]

Note that the congruences and sets of words can be chosen in multiple ways to generate the same variety.  For example, 
 $\mathbb M(\emptyset) = \mathbb M_\tau (\emptyset)$ and  $\mathbb M(1) = \mathbb  M_{\tau}(\{1\}) =\mathbb M_{\sigma} (\mathtt{a}^{1+})$, where
 $\tau$ is any congruence on $\mathfrak A^*$ such that the empty word $1$ forms a singleton $\tau$-class
and $\sigma$  is any congruence  such that $\mathtt{a}^{1+}$ is a $\sigma$-class.
 Notice that $M(1) \cong  M_{\tau}(\{1\}) = \{0,1\}$ is the 2-element semilattice while
$M_{\sigma}(\mathtt{a}^{1+}) = \{1, {\mathtt a}^{1+}, 0\}$ is  the 3-element semilattice.

Another example,  $\mathbb M(a) = \mathbb M_{\delta} \mathtt{(a}) = \mathbb M_{\gamma} (\mathtt {a}^{2+})$ and 
$\mathbb M(ab) = \mathbb M_{\delta} (\mathtt{ab})$, where $\delta$ is  any congruence such that each letter $a \in \mathfrak A$ forms a singleton $\delta$-class. 
Notice that $M(a) \cong  M_{\gamma} \mathtt{(a}) = \{1,a, 0\}$ is a cyclic monoid while  $M_{\gamma} (\mathtt {a}^{2+}) = \{1, \mathtt{a}, \mathtt {a}^{2+}, 0\}$ is a cyclic monoid with zero adjoint.

Recall from the introduction that  \eqref{A01} is a presentation of  the 5-element monoid $A_0^1$.
Let $E^1$ and  $B_0^1$ denote  the monoids  obtained  by adjoining an identity element to the following  semigroups:
\[E = \langle e, c \mid  e^2=e, c^2=ec=0, ce=c \rangle;\]
\[B_0 = \langle e, f, c \mid  e^2=e, f^2 =f, ef=fe=0, ec=cf=c  \rangle.\]
The four-element monoid $E^1$ and the five-element monoid $B_0^1$ were shown to be FB in  \cite{1977}.
If $M$ is a monoid then  $\var M$ denotes the variety generated by $M$.
Denote  $\var A^1_0$ by $\mathbb A_0$,   $\var E^1$ by $\mathbb E$  and  $\var B_0^1$ by $\mathbb B_0$.

The subvariety  lattice of $\mathbb A \vee \overline{\mathbb A}$ in Figure~\ref{pic: A} contains  the variety $\mathbb A_0$, whose lattice of subvarieties was first described in  \cite{ELB0}. The following theorem assigns a generating monoid  of the form $M_{\gamma}({\mathtt W})$
 to each variety in the interval from $\mathbb D= \mathbb M(ab)$ to $\mathbb A_0$.

\begin{theorem} \label{latticeA}

(i) ${\mathbb E} = \mathbb M_\gamma(\mathtt{ta^+})$.

(ii) $\overline{\mathbb E} = \mathbb M_\gamma(\mathtt{a^+t})$.

(iii) ${\mathbb E} \vee \overline{\mathbb E} \stackrel{\cite{ELB0}}{=} \mathbb B_0 = \mathbb M_{\gamma}(\mathtt{ta^+, a^+t})  = \mathbb M_{\gamma}(\mathtt{a^+ta^+})$.

(iv) $\mathbb A_0 = \mathbb M_\gamma(\mathtt{a^+b^+})$.

\end{theorem}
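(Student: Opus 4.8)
To prove Theorem~\ref{latticeA}, the overall strategy is to combine the combinatorial characterization of $M_\gamma$-generated varieties (Corollary~\ref{C: MW1}) with known facts about the five small monoids $E^1, B_0^1, A_0^1$ and their duals. By Corollary~\ref{C: MW1}, for each $\tau=\gamma$ and each candidate set of $\gamma$-words $\mathtt W$, a monoid variety $\vv$ contains $M_\gamma(\mathtt W)$ if and only if every $\gamma$-word in $\mathtt W$ is a $\gamma$-term for $\vv$. So for part~(i), say, I would show that $\var E^1 \subseteq \mathbb M_\gamma(\mathtt{ta^+})$ and $\mathbb M_\gamma(\mathtt{ta^+}) \subseteq \var E^1$ by establishing, in each direction, that the relevant $\gamma$-word is (or is not) a $\gamma$-term. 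Concretely: (a) verify that $\mathtt{ta^+}$ (i.e., the $\gamma$-class $\{ta^n \mid n\ge 2\}$) is a $\gamma$-term for $E^1$, which by Corollary~\ref{C: MW1} gives $E^1 \in \mathbb M_\gamma(\mathtt{ta^+})$, hence $\var E^1 \subseteq \mathbb M_\gamma(\mathtt{ta^+})$; and (b) conversely show that every identity of $\var E^1$ that concerns words related to $ta^+$ is already a $\gamma$-identity, so that $M_\gamma(\mathtt{ta^+})$ satisfies all identities of $E^1$, giving the reverse inclusion. Part~(ii) follows from~(i) by the duality between $\gamma$ restricted to $\mathtt{ta^+}$ versus $\mathtt{a^+t}$ and between $E^1$ and $\overline{E^1}$; note $\rho$ and $\lambda$ are dual but $\gamma$ is self-dual, and $M_\gamma(\mathtt{a^+t}) = \overline{M_\gamma(\mathtt{ta^+})}$.

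**Part (iii).** For the join, I would use Corollary~\ref{product}, equation~\eqref{e: product1}: since $\gamma$ is a fixed congruence, $\mathbb M_\gamma(\mathtt{ta^+}) \vee \mathbb M_\gamma(\mathtt{a^+t}) = \mathbb M_\gamma(\{\mathtt{ta^+}, \mathtt{a^+t}\})$. So parts~(i) and~(ii) immediately give $\mathbb E \vee \overline{\mathbb E} = \mathbb M_\gamma(\mathtt{ta^+, a^+t})$. The remaining content is the two equalities $\mathbb E \vee \overline{\mathbb E} = \mathbb B_0$ (cited from \cite{ELB0}) and $\mathbb M_\gamma(\mathtt{ta^+, a^+t}) = \mathbb M_\gamma(\mathtt{a^+ta^+})$. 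For the latter, I would note that $\mathtt{ta^+} \le_\gamma \mathtt{a^+ta^+}$ and $\mathtt{a^+t} \le_\gamma \mathtt{a^+ta^+}$, so $M_\gamma(\mathtt{a^+ta^+})$ has both $\mathtt{ta^+}$ and $\mathtt{a^+t}$ as $\gamma$-terms (using Corollary~\ref{C: taum}), giving $\supseteq$; for $\subseteq$ one checks that $\mathtt{a^+ta^+}$ is a $\gamma$-term whenever both $\mathtt{ta^+}$ and $\mathtt{a^+t}$ are — this is the kind of "gluing two one-sided conditions into a two-sided one" argument that should go through for $\gamma$ but needs a short verification that an identity $a^m t a^n \approx \mathbf w$ with both ends controlled forces $\mathbf w \gamma a^m t a^n$.

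**Part (iv) and the main obstacle.** For $\mathbb A_0 = \mathbb M_\gamma(\mathtt{a^+b^+})$, the same two-directional scheme applies: show $\mathtt{a^+b^+}$ (the $\gamma$-class $\{a^m b^n \mid m,n\ge 2\}$) is a $\gamma$-term for $A_0^1$, and conversely that $M_\gamma(\mathtt{a^+b^+})$ satisfies all identities of $A_0^1$. The first direction is essentially the computation behind~\eqref{A01} recast for $\gamma$ instead of $\tau_1$; the idempotents $\mathtt a^+, \mathtt b^+$ of $\mathfrak A^*/\gamma$ play the roles of $e,f$ and $\mathtt a^+ \mathtt b^+ \notin \mathtt{W}^{\le_\gamma}$ mirrors $ef=0$. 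The genuinely delicate direction, and what I expect to be the main obstacle throughout the theorem, is verifying that a specified $\gamma$-class is a $\gamma$-term for the given small monoid — equivalently, that the small monoid's identities "see" the $\gamma$-structure ($\tau_1$-classes refined by the multiple-letter set) but never collapse two distinct $\gamma$-classes among subwords of the generating word. This requires either a direct identity-basis argument (using the known finite bases of $E^1, B_0^1, A_0^1$ from \cite{1977, ELB0}) or a substitution argument evaluating candidate identities in the monoid. The cleanest route is probably to take the explicit finite identity basis of each monoid, deduce that any consequence $\mathbf u \approx \mathbf v$ with $\mathbf u$ a subword of the generating word must preserve both $\tau_1$-islands and the multiple-letter set, and invoke Lemma~\ref{twoletters} (with $k=2$, $m=1$, since $x^2$ is an isoterm for each of these monoids) to pass from the generating word to all its $\le_\gamma$-subwords. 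I would organize the proof as: first record that $x^2$ is an isoterm for $E^1, B_0^1, A_0^1$ and their duals; then handle each of (i)--(iv) by the two inclusions, citing \cite{ELB0} for $\mathbb E\vee\overline{\mathbb E}=\mathbb B_0$ and \cite{ELB0} (or the original lattice description) for the structure of $\mathbb A_0$.
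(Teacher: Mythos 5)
There is a genuine gap, and it stems from a reversed application of Corollary~\ref{C: MW1}. That corollary says that a monoid variety $\vv$ \emph{contains the monoid} $M_\gamma(\mathtt W)$ (i.e., $M_\gamma(\mathtt W)\in\vv$) if and only if every $\gamma$-word in $\mathtt W$ is a $\gamma$-term for $\vv$. So your step (a) --- verifying that $\mathtt{ta^+}$ is a $\gamma$-term for $E^1$ --- yields $M_\gamma(\mathtt{ta^+})\in\var E^1$, hence $\mathbb M_\gamma(\mathtt{ta^+})\subseteq\mathbb E$, \emph{not} ``$E^1\in\mathbb M_\gamma(\mathtt{ta^+})$, hence $\var E^1\subseteq\mathbb M_\gamma(\mathtt{ta^+})$'' as you wrote. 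Your step (b), showing that $M_\gamma(\mathtt{ta^+})$ satisfies all identities of $E^1$, is just a restatement of that same inclusion $\mathbb M_\gamma(\mathtt{ta^+})\subseteq\mathbb E$. Consequently both halves of your plan prove the same containment, and the opposite one, $\mathbb E\subseteq\mathbb M_\gamma(\mathtt{ta^+})$, is never established; the same defect propagates to parts (ii) and (iv).

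The paper closes the missing direction not by an identity computation but by a lattice argument: having shown $\mathbb M_\gamma(\mathtt{ta^+})\subseteq\mathbb E$, it observes that $\mathbb D=\mathbb M(ab)$ is the \emph{unique maximal proper subvariety} of $\mathbb E$ (known from the cited lattice descriptions), and that $\mathbb D\models tx^2\approx x^2t$, so $tx^2$ is not a $\gamma$-term for $\mathbb D$ and hence $M_\gamma(\mathtt{ta^+})\notin\mathbb D$ by Corollary~\ref{C: MW1}; a subvariety of $\mathbb E$ not contained in $\mathbb D$ must equal $\mathbb E$. The analogous step for (iv) uses that $\mathbb B_0$ is the unique maximal subvariety of $\mathbb A_0$ together with $B_0^1\models x^2y^2\approx y^2x^2$. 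You would need either to add this unique-maximal-subvariety argument, or to replace it by exhibiting $E^1$ (resp.\ $A_0^1$) as a submonoid of $M_\gamma(\mathtt{ta^+})$ (resp.\ of $M_\gamma(\mathtt{a^+b^+})$), which the paper records only as a remark. Your treatment of the join in (iii) via Corollary~\ref{product}, and of the equality $\mathbb M_\gamma(\mathtt{ta^+, a^+t})=\mathbb M_\gamma(\mathtt{a^+ta^+})$ via $\le_\gamma$ and Corollary~\ref{C: taum}, does match the paper, as does the substitution method for checking that the generating classes are $\gamma$-terms for the small monoids.
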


\begin{proof} (i)  Let us verify that every word in $(\mathtt{t \circ_\gamma x^+}) = \{tx^n \mid n \ge 2\}$ is a $\gamma$-term for $E^1$. Indeed, suppose that for some $n\ge 2$, the monoid $E^1$ satisfies an identity $tx^n \approx {\bf u}$ such that $tx^n$ and $\bf u$ are not $\gamma$-related. Since $E^1 \models x^2 \approx x^3$  and $xy$ is an isoterm for $E^1$, we conclude that $E^1$ satisfies one of the following identities:
\[tx^2 \approx x^2t,  tx^2 \approx xtx^2, tx^2 \approx x^2tx^2\]
If we substitute $c$ for $t$ and $e$ for $x$, each of these identities yields $c = 0$.  To avoid a contradiction, we conclude that
$\mathtt{ta^+}$ is a $\gamma$-term for $E^1$.  Hence the variety ${\mathbb E}$ contains $M_{\gamma}(\mathtt{ta}^+)$ by 
Corollary~\ref{C: MW1}.

 Since ${\mathbb D} \models tx^2 \approx x^2t$, the word $tx^2$
is not a $\gamma$-term for $\mathbb D$. Therefore,
the variety $\mathbb D$ does not contain $M_{\gamma}(\mathtt{ta^+})$ by Corollary~\ref{C: MW1}.
Since $\mathbb D$ is a unique maximal subvariety of $\mathbb E$, the variety $\mathbb E$ is generated by $M_{\gamma}(\mathtt{ta^+})$.

Part (ii) is dual to Part (i).

(iii) In view of Parts (i)--(ii) and Corollary \ref{product}, we have
 ${\mathbb E} \vee \overline{\mathbb E} = \mathbb M_{\gamma}(\mathtt{ta^+, a^+t})$.

Let us verify that $\mathbb M_{\gamma}(\mathtt{ta^+, a^+t})  = \mathbb M_{\gamma}(\mathtt{a^+ta^+}).$

Indeed,   in view of Corollary~\ref{C: MW1},  the $\gamma$-word $(\mathtt{a^+ \circ_\gamma t})$ is a $\gamma$-term for  $\vv= \mathbb M (\mathtt{a^+t, ta^+})$.  Since $\mathtt{t}  \le_\gamma (\mathtt{t  \circ_\gamma a^+})$, the  $\gamma$-class ${\mathtt t}$ is a $\gamma$-term for $\vv$ by  Corollary~\ref{C: taum}.  Since ${\mathtt t} = \{t\}$, the word $t$ is an isoterm for $\vv$.

If  $(\mathtt{a^+ \circ_\gamma t \circ_\gamma a^+})$ is not a $\gamma$-term for $\vv$ then  $\vv \models x^n t x^m \approx {\bf u}$ for some $n,m \ge 1$ and
some word ${\mathbf u}$ that  is not $\gamma$-related to $x^n t x^m$.
Observation~\ref{O: gammak}  implies that for some $k \ge 2$, either $\vv \models  x^n t x^m \approx x^k t$
or $\vv \models  x^n t x^m \approx  t x^k$. But this is impossible because both $x^kt$ and $tx^k$ are $\gamma$-terms for  $\vv$ by Corollary~\ref{C: MW1}. Therefore,  $(\mathtt{a^+ \circ_\gamma t \circ_\gamma a^+})$ is a $\gamma$-term for  $\vv=\mathbb M_\gamma (\mathtt{a^+t, ta^+})$ and consequently, $\mathbb M_\gamma (\mathtt{a^+t, ta^+}) \supseteq \mathbb M_ \gamma (\mathtt{a^+ta^+})$
by Corollary~\ref{C: MW1}.

Conversely, since $(\mathtt{a^+ \circ_\gamma t })  \le_\gamma (\mathtt{a^+ \circ_\gamma t  \circ_\gamma a^+})$ and 
 $(\mathtt{t \circ_\gamma a^+ })  \le_\gamma (\mathtt{a^+ \circ_\gamma t  \circ_\gamma a^+})$, both  $(\mathtt{a^+ \circ_\gamma t })$ and
 $(\mathtt{t \circ_\gamma a^+ })$ are  $\gamma$-terms for $\mathbb M_ \gamma (\mathtt{a^+ta^+})$ by Corollary~\ref{C: taum}.
Therefore,  $\mathbb M_\gamma (\mathtt{a^+t, ta^+}) \subseteq \mathbb M_ \gamma (\mathtt{a^+ta^+})$
by Corollary~\ref{C: MW1}

(iv) As verified in \cite{OS}, the $\tau_1$-word  $(\mathtt{a^{1+} \circ_{\tau_1} b^{1+}}) = \{a^nb^m \mid n,m \ge 1\}$ is a $\tau_1$-term for  $A_0^1$. Since  the $\gamma$-class $(\mathtt{a^{2+} \circ_\gamma b^{2+}})$ is a subset of $(\mathtt{a^{1+} \circ_{\tau_1} b^{1+}})$, every word in $(\mathtt{a^{2+} \circ_\gamma b^{2+}}) = \{a^nb^m \mid n,m \ge 2\}$ is a $\tau_1$-term for $A_0^1$.
If some word in $(\mathtt{a^+ \circ_\gamma b^+}) = (\mathtt{a^{2+} \circ_\gamma b^{2+}})$ is not a $\gamma$-term for $A_0^1$,  then  $A_0^1 \models x^ny^m  \approx x^ky$ or $A_0^1 \models x^ny^m  \approx xy^k$ for some $n, m \ge 2$ and $k \ge 1$. Since each of these identities fails in 
$A_0^1$, the $\gamma$-word  $(\mathtt{a^+ \circ_\gamma b^+})$ is a $\gamma$-term for $A_0^1$.
Hence the variety $\mathbb A_0$ contains  $M_\gamma(\mathtt{a^+b^+})$ by Corollary~\ref{C: MW1}.

On the other hand, since $B_0^1 \models x^2y^2 \approx y^2x^2$,  the word $x^2y^2$ is not a  $\gamma$-term for $B_0^1$.  Consequently, $\mathbb B_0$ does not contain  $M_\gamma(\mathtt{a^+b^+})$ by Corollary~\ref{C: MW1}.
 Since   $\mathbb B_0$ is a unique maximal subvariety of $\mathbb A_0$, we have $\mathbb A_0 = \mathbb M_\gamma(\mathtt{a^+b^+})$.
\end{proof}

\begin{remark} (i)  The monoid $E^1$ is isomorphic to the submonoid $\{ 1,  \mathtt{a^+, ta^+} , 0\}$ of  $M_\gamma(\mathtt{ta^+})$.

(ii) The five-element monoid $B_0^1$ can be obtained from the submonoid  \\ $\{1, \mathtt{ta^+, a^+, b^+t, b^+}, 0\}$ of
$M_{\gamma}(\mathtt{ta^+, b^+t})$ by identifying $c = \mathtt{ta^+ = b^+t}$.

(iii) $A_0^1 \stackrel{\cite{OS}}{\cong}  M_{\tau_1}(\mathtt{a^{1+}b^{1+}})$  is isomorphic  to the  submonoid \\ $\{1, \mathtt{a^+, b^+, a^+b^+}, 0\}$ of $M_\gamma(\mathtt{a^+b^+})$.
\end{remark}

\begin{figure}[htb]
\unitlength=1mm
\linethickness{0.4pt}
\begin{center}
\begin{picture}(55,98)
\put(35,5){\circle*{1.33}}
\put(35,15){\circle*{1.33}}
\put(35,25){\circle*{1.33}}
\put(35,35){\circle*{1.33}}
\put(45,45){\circle*{1.33}}
\put(25,45){\circle*{1.33}}
\put(35,55){\circle*{1.33}}
\put(25,65){\circle*{1.33}}
\put(45,65){\circle*{1.33}}
\put(35,75){\circle*{1.33}}
\put(45,85){\circle*{1.33}}
\put(25,85){\circle*{1.33}}
\put(35,95){\circle*{1.33}}

\put(35,5){\line(0,1){30}}
\put(35,35){\line(-1,1){10}}
\put(35,35){\line(1,1){10}}
\put(25,45){\line(1,1){10}}
\put(25,45){\line(1,1){20}}
\put(25,65){\line(1,1){20}}
\put(45,45){\line(-1,1){20}}
\put(45,65){\line(-1,1){20}}

\put(45,85){\line(-1,1){10}}
\put(25,85){\line(1,1){10}}
\put(35,2){\makebox(0,0)[cc]{${\mathbb M}(\varnothing)$}}
\put(37,15){\makebox(0,0)[lc]{${\mathbb M}(1)$}}
\put(37,25){\makebox(0,0)[lc]{${\mathbb M}(a)$}}
\put(37,35){\makebox(0,0)[lc]{${\mathbb M}(ab)$}}
\put(23,45){\makebox(0,0)[rc]{${\mathbb M}_\gamma(\mathtt{ta^+})$}}
\put(24,85){\makebox(0,0)[rc]{${\mathbb M}_\gamma(\mathtt{a^+b^+ta^+})$}}
\put(72,85){\makebox(0,0)[rc]{${\mathbb M}_\gamma(\mathtt{a^+t b^+a^+})$}}
\put(72,75){\makebox(0,0)[rc]{${\mathbb M}_\gamma(\mathtt{a^+b^+, a^+tsa^+})$}}
\put(68,55){\makebox(0,0)[rc]{$\mathbb B_0 = {\mathbb M}_\gamma(\mathtt{a^+ta^+})$}}
\put(23,65){\makebox(0,0)[rc]{$\mathbb A_0 = {\mathbb M}_{\gamma}(\mathtt{a^+b^+})$}}
\put(47,45){\makebox(0,0)[lc]{${\mathbb M_\gamma(\mathtt{a^+t})}$}}
\put(47,65){\makebox(0,0)[lc]{$ \mathbb Q = {\mathbb M}_{\gamma}(\mathtt{a^+tsa^+})$}}

\put(20, 98){\makebox(0,0)[lc]{$\mathbb A \vee \overline{\mathbb A} = {\mathbb M}_\gamma(\mathtt{a^+b^+ta^+, a^+t b^+a^+})$}}

\end{picture}
\end{center}
\caption{The subvariety  lattice of $\mathbb A \vee \overline{\mathbb A}$ (cf. Figure~1 in \cite{ZL})}
\label{pic: A}
\end{figure}
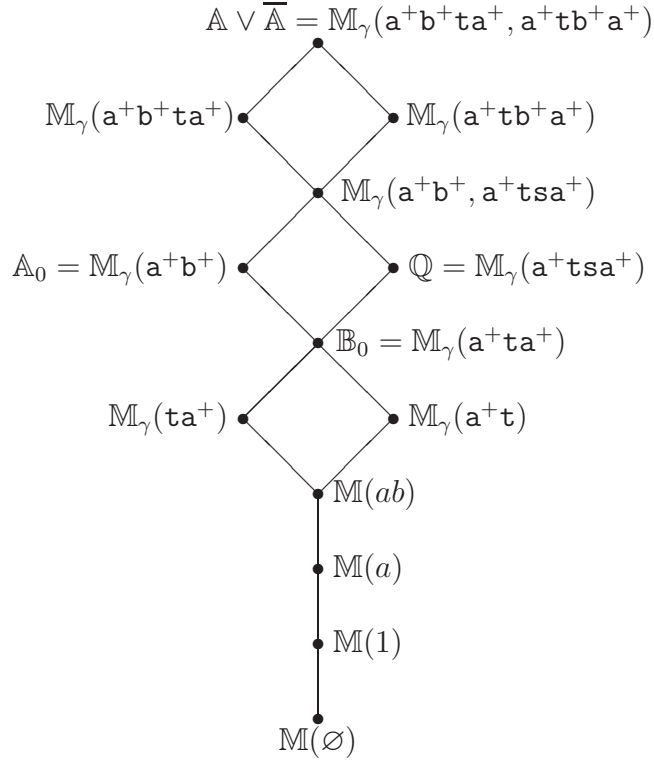

 Let $Q^1$  denote  the monoid  obtained  by adjoining an identity element to the following  semigroup:
\[Q = \langle e, b, c \mid  e^2 = e, eb = b, ce = c, ec = be = cb = 0 \rangle.\]
The semigroup $Q$ was introduced and shown to be FB  in \cite[Section 6.5]{JA}. The finite basis for the six-element monoid $Q^1$ was found in \cite[Section 4.1]{LeeLi}.

 Denote $\var Q^1$ by $\mathbb Q$. 
According to  Figure~\ref{pic: A}, the variety  $\mathbb B_0$ has two covers:   $\mathbb A_0$  and  $\mathbb Q$.
 The following theorem assigns a generating monoid  of the form $M_{\gamma}({\mathtt W})$
 to each variety in the interval from $\mathbb Q$ to $\mathbb A \vee \overline{\mathbb A}$.

\begin{theorem} \label{main1}

(i) $\mathbb Q = \mathbb M_\gamma(\mathtt{a^+ t s a^+})$.

(ii) $\mathbb A \wedge \overline{\mathbb A} \stackrel{\cite{ZL}}{=} \mathbb A_0 \vee \mathbb Q = \mathbb M_\gamma(\mathtt{a^+b^+, a^+ t s a^+})$.

(iii) $\mathbb A = \mathbb M_\gamma(\mathtt{a^+b^+ta^+})$, dually, $\overline{\mathbb A} = \mathbb M_\gamma(\mathtt{a^+tb^+a^+})$.

(iv) $\mathbb A \vee \overline{\mathbb A} = \mathbb M_\gamma (\mathtt{a^+b^+ta^+, a^+tb^+a^+})$.

\end{theorem}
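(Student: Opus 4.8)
The plan is to treat all four parts of Theorem~\ref{main1} by the two‑inclusion scheme already used for Theorem~\ref{latticeA}: the containment $\mathbb M_\gamma(\mathtt W)\subseteq\vv$ comes from Corollary~\ref{C: MW1} by checking that the relevant $\gamma$‑words are $\gamma$‑terms for the generating monoid, and the reverse containment comes from the covering relations read off Figure~\ref{pic: A}, where (by collinearity of the drawn edges) $\mathbb B_0$ is the unique maximal subvariety of $\mathbb Q$ and $\mathbb A\wedge\overline{\mathbb A}$ is the unique maximal subvariety of $\mathbb A$. Parts (ii) and (iv) will then be quick corollaries of (i) and (iii). For (iv) I combine $\mathbb A=\mathbb M_\gamma(\mathtt{a^+b^+ta^+})$ from part (iii) with its dual $\overline{\mathbb A}=\mathbb M_\gamma(\mathtt{a^+tb^+a^+})$ via Corollary~\ref{product}, equation~\eqref{e: product1}, giving $\mathbb A\vee\overline{\mathbb A}=\mathbb M_\gamma(\mathtt{a^+b^+ta^+,a^+tb^+a^+})$. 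For (ii) I combine $\mathbb A_0=\mathbb M_\gamma(\mathtt{a^+b^+})$ (Theorem~\ref{latticeA}(iv)) with $\mathbb Q=\mathbb M_\gamma(\mathtt{a^+tsa^+})$ (part (i)) through the same equation to get $\mathbb A_0\vee\mathbb Q=\mathbb M_\gamma(\mathtt{a^+b^+,a^+tsa^+})$, the identification $\mathbb A\wedge\overline{\mathbb A}=\mathbb A_0\vee\mathbb Q$ being quoted from \cite{ZL}. So the genuine work lies in the forward and reverse inclusions of (i) and (iii).

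For part (i) I first record three facts about $Q^1$: it satisfies $x^2\approx x^3$ (every non‑identity element of $Q$ squares to $0$, while $e^2=e$); the word $x$ is an isoterm for it (since $b^2=0\ne b$ gives $Q^1\not\models x\approx x^2$); and, because $\mathbb M(a)\subseteq\mathbb Q$, every identity of $Q^1$ preserves $\con$ and, by Observation~\ref{O: gammak} with $k=1$, the sets $\simp$ and $\mul$. Hence if $Q^1\models x^ntsx^m\approx\mathbf w$ then $\mathbf w$ is a word on $\{x,t,s\}$ with $\mul(\mathbf w)=\{x\}$ and $\simp(\mathbf w)=\{t,s\}$, so $\mathbf w$ is $\gamma$‑related to $x^ntsx^m$ exactly when it has the block pattern $x^{+}tsx^{+}$. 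I rule out every remaining arrangement by substitutions into $Q$ built on $be=ec=cb=0$ and $bc\ne0$: the assignment $x\mapsto e,\ t\mapsto b,\ s\mapsto c$ sends $x^ntsx^m$ to $bc\ne0$, whereas the transposition $x^nstx^m$ collapses to $ecbe=0$, and any arrangement placing an $x$‑block just after $t$ (or just before $s$) meets a factor $be$ (or $ec$) and vanishes; the handful of patterns not detected this way, such as those deleting an $x$‑block, are eliminated by further substitutions of the same kind. Thus $\mathtt{a^+tsa^+}$ is a $\gamma$‑term for $Q^1$ and $\mathbb M_\gamma(\mathtt{a^+tsa^+})\subseteq\mathbb Q$ by Corollary~\ref{C: MW1}. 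For the reverse inclusion it suffices to show $\mathtt{a^+tsa^+}$ is not a $\gamma$‑term for $B_0^1$, and the key computation is $B_0^1\models x^2tsx^2\approx x^2stx^2$: once two distinct ``$c$‑type'' values are sandwiched between the idempotent images of $x^2$, both sides vanish, so the transposition holds although its sides are not $\gamma$‑related. By Corollary~\ref{C: MW1} this places $M_\gamma(\mathtt{a^+tsa^+})$ outside $\mathbb B_0$, and the cover relation forces $\mathbb M_\gamma(\mathtt{a^+tsa^+})=\mathbb Q$.

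Part (iii) runs in parallel. The monoid $A^1$ again satisfies $x^2\approx x^3$ (each non‑identity element of $A$ squares to $e$, $f$ or $0$), has $x$ as an isoterm, and preserves $\con$, $\simp$ and $\mul$; so any $\mathbf w$ with $A^1\models x^ny^mtx^k\approx\mathbf w$ is a word on $\{x,y,t\}$ with $\mul=\{x,y\}$ and $\simp=\{t\}$, being $\gamma$‑related to $x^ny^mtx^k$ exactly when it has pattern $x^{+}y^{+}tx^{+}$. The substitution $x\mapsto f,\ y\mapsto e,\ t\mapsto c$ sends $x^ny^mtx^k$ to $fc\ne0$ using $ec=cf=c$, while the forbidden factors $ef=0$ (an $x$‑block directly after a $y$‑block) and $ce=0$ (a $y$‑block directly after $t$) collapse every arrangement that reorders the blocks; patterns that merely delete a block are separated by the auxiliary substitution $x\mapsto e,\ y\mapsto e,\ t\mapsto c$. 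This gives $\mathbb M_\gamma(\mathtt{a^+b^+ta^+})\subseteq\mathbb A$. For the reverse inclusion the unique maximal subvariety of $\mathbb A$ is $\mathbb A\wedge\overline{\mathbb A}=\mathbb A_0\vee\mathbb Q$ (Figure~\ref{pic: A} and part (ii)); the crucial identity is $x^2y^2tx^2\approx x^2y^2x^2tx^2$, which I verify holds in both $A_0^1$ and $Q^1$ (hence in their join) but fails in $A^1$ under $x\mapsto f,\ y\mapsto e,\ t\mapsto c$, where the left side is $fc\ne0$ while the inserted block creates the factor $fef=0$ on the right. Since the two sides are not $\gamma$‑related, $\mathtt{a^+b^+ta^+}$ is not a $\gamma$‑term for $\mathbb A\wedge\overline{\mathbb A}$, so $M_\gamma(\mathtt{a^+b^+ta^+})\notin\mathbb A\wedge\overline{\mathbb A}$ and the cover relation yields $\mathbb M_\gamma(\mathtt{a^+b^+ta^+})=\mathbb A$; the formula for $\overline{\mathbb A}$ is dual.

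The forward directions of (i) and (iii) are routine: once $\con$, $\simp$ and $\mul$ are known to be preserved and exponents are absorbed by $x^2\approx x^3$, the non‑$\gamma$‑related candidate right‑hand sides form a short finite list of block patterns, each refuted by one of the displayed substitutions into $Q$ or $A$. The genuine obstacle is locating the collapsing identities used for the reverse inclusions. The transposition $x^2tsx^2\approx x^2stx^2$ for $B_0^1$ is found easily, but the identity $x^2y^2tx^2\approx x^2y^2x^2tx^2$ needed for $\mathbb A\wedge\overline{\mathbb A}$ is delicate: it must hold simultaneously in $A_0^1$ and in $Q^1$ while failing in $A^1$, and natural first guesses (transposing $y$ and $t$, or deleting a block) do not survive in $A_0^1$, since setting $x\mapsto1$ would force the false identity $y^2t\approx ty^2$. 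Checking that the inserted‑block identity does remain valid in $A_0^1$ and $Q^1$ is the step most needing care, and it is exactly what pins $\mathbb A$ down to its unique maximal subvariety.
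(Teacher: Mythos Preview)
The overall two-inclusion strategy matches the paper's, and parts (ii) and (iv) follow exactly as you say from Corollary~\ref{product}. In (iii) your separating identity $x^2y^2tx^2\approx x^2y^2x^2tx^2$ differs from the paper's choice $xy^2tx\approx yxytx$ (which the paper extracts from the known identity bases of $A_0^1$ and $Q^1$), but yours also holds in both $A_0^1$ and $Q^1$ and fails in $A^1$ under $x\mapsto f$, $y\mapsto e$, $t\mapsto c$, so that part goes through. For the forward inclusions the paper economises by first invoking that $\mathtt{a^+ta^+}$ is already a $\gamma$-term for $\mathbb B_0\subseteq\mathbb Q$ (respectively $\mathbb B_0\subseteq\mathbb A$), which pins down the position of $t$ relative to the $x$-blocks and cuts the case analysis to a single leftover pattern; your direct substitution-by-substitution check is longer but not wrong in principle.

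The reverse inclusion in part (i), however, contains a genuine error. You assert $B_0^1\models x^2tsx^2\approx x^2stx^2$, but this is false: substituting $x=1$, $t=e$, $s=c$ gives $ec=c$ on the left and $ce=0$ on the right. Your justification (``once two distinct $c$-type values are sandwiched between the idempotent images of $x^2$, both sides vanish'') tacitly assumes $x^2\ne 1$; any commutator-style identity $\ldots ts\ldots\approx\ldots st\ldots$ will collapse the same way at $x=1$, since $B_0^1$ is not commutative. The paper instead uses $B_0^1\models xtsx\approx xtxsx$, cited from the identity basis in~\cite{1977}: here $x=1$ yields the tautology $ts\approx ts$, while $xtsx$ and $xtxsx$ lie in the distinct $\gamma$-classes $\mathtt{a^+tsa^+}$ and $\mathtt{a^+ta^+sa^+}$, witnessing that $\mathtt{a^+tsa^+}$ is not a $\gamma$-term for $\mathbb B_0$.
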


\begin{proof} (i) Suppose that $Q^1 \models x^ntsx^k \approx {\bf u}$  for some $n,k \ge 1$. Since $(\mathtt{a^+ \circ_\gamma t \circ_\gamma a^+})$ is a $\gamma$-term for $B_0^1$ by Theorem~\ref{latticeA}, either  ${\bf u} = x^{p_1}tsx^{p_2}$
or ${\bf u} = x^{q_1}tx^{q_2}sx^{q_3}$ for some $p_1, p_2, q_1, q_2, q_3 \ge 1$. To rule out the second possibility, substitute  $\Theta(x)=e$, $\Theta(t)=b$ and $\Theta(s)=c$. Then
$\Theta( x^{p_1}tsx^{p_2}) = bc$ but $\Theta( x^{q_1}tx^{q_2}sx^{q_3})=0$.
We conclude that $(\mathtt{a^+  \circ_\gamma t  \circ_\gamma s  \circ_\gamma a^+}) = \{a^n  t  s  a^k \mid n, k \ge 1\}$ is a $\gamma$-term for $Q^1$.  Therefore, $\mathbb Q$ contains $M_\gamma(\mathtt{a^+tsa^+})$  by Corollary~\ref{C: MW1}.

 Since $B_0^1 \models xtsx \approx xtxsx$  \cite{1977}, the $\gamma$-word $(\mathtt{a^+  \circ_\gamma t  \circ_\gamma s  \circ_\gamma a^+})$ is not a $\gamma$-term for $B_0^1$.
 Corollary~\ref{C: MW1} implies that  $\mathbb B_0$ does not contain $M_\gamma(\mathtt{a^+tsa^+})$.
Since $\mathbb B_0$ is a unique maximal subvariety of $\mathbb Q$, we have $\mathbb Q = \mathbb M_\gamma(\mathtt{a^+tsa^+})$.

Part (ii) follows from Theorem~\ref{latticeA}(iv), Part~(i) and
Corollary \ref{product}.

(iii)  Recall from the introduction that $A^1$ is obtained by adjoining an identity element to the semigroup $A$
given by presentation \eqref{A}.

Suppose that $A^1 \models x^ny^mtx^k \approx {\bf u}$ for some $n,k \ge 1$ and $m \ge 2$.
 Since the variety $\mathbb A$ contains  $\mathbb B_0= \mathbb M_{\gamma}(\mathtt{a^+ta^+})$,  the $\gamma$-words $(\mathtt{a^+ \circ_\gamma t})$ and $(\mathtt{a^+ \circ_\gamma t \circ_\gamma a^+})$ are $\gamma$-terms
for $A^1$ by Corollaries \ref{C: taum} and \ref{C: MW1}.
 Consequently, ${\bf u} = {\bf a}tx^p$ for some  $p\ge 1$ and ${\bf a} \in \mathfrak A^*$ such that $x$ occurs at least once in $\bf a$ and $y$ occurs at least twice in $\bf a$.

If ${\bf a}$ contains $yx$ as a subword then substituting $\Theta(x)=f$, $\Theta(y)=e$ and
$\Theta(t) =c$ we obtain $\Theta (x^ny^mtx^k) =fc$ but $\Theta({\bf u}) =0$. To avoid a contradiction, we conclude that ${\bf u}=x^qy^rtx^p$ for some $q, p \ge 1$ and $r \ge 2$.
Therefore,  $(\mathtt{a^+ \circ_\gamma b^+ \circ_\gamma t \circ_\gamma a^+})$ is a $\gamma$-term for $A^1$. 
Hence  the variety $\mathbb A$ contains $M_\gamma(\mathtt{a^+b^+ta^+})$ by Corollary~\ref{C: MW1}.

On the other hand,  the identity bases for $A_0^1$\cite{1977} and for $Q^1$\cite{LeeLi} imply that both $A_0^1$ and 
$Q^1$ satisfy $xy^2tx \approx yxytx$.
Hence  $(\mathtt{a^+ \circ_\gamma b^+ \circ_\gamma t \circ_\gamma a^+})$ is not a $\gamma$-term for $A_0^1 \times Q^1$.
Consequently,  $\mathbb A_0 \vee \mathbb Q$  does not contain $M_\gamma(\mathtt{a^+b^+ta^+})$ by  Corollary~\ref{C: MW1}.
Since $\mathbb A_0 \vee \mathbb Q$ is a unique maximal subvariety of $\mathbb A$,  we have $\mathbb A = \mathbb M_\gamma(\mathtt{a^+b^+ta^+})$.

Part (iv) follows from Part~(iii) and Corollary \ref{product}.
\end{proof}

\begin{remark}

(i) The monoid $Q^1$ is isomorphic to the submonoid  
$$\{1, \mathtt{a^+, a^+t, sa^+, a^+tsa^+}, 0\}$$
of $M_\gamma(\mathtt{a^+ t s a^+})$ by $e \rightarrow \mathtt{a^+}$,  $b \rightarrow \mathtt{a^+t}$, $c \rightarrow \mathtt{sa^+}$.

(ii) Gusev noticed that  $A^1$ is isomorphic to the submonoid 
$$\{1, \mathtt{a^+, b^+, a^+b^+, b^+ta^+, a^+b^+ta^+}, 0\}$$
of  $M_\gamma(\mathtt{a^+b^+ta^+})$ by $f \rightarrow \mathtt{a^+}$,  $e \rightarrow \mathtt{b^+}$ and
 $c \rightarrow \mathtt{b^+ta^+}$.

\end{remark}

\section{A short proof that  limit variety  $\mathbb A \vee \overline{\mathbb A}$ is NFB} \label{sec:new}

The goal of this section is to reprove Theorem 6.2 in \cite{WTZ2}  that says that the monoid $A^1 \times \overline{A}^1$ is NFB.
To this aim, we establish two sufficient conditions under which a monoid is NFB. 
The following lemma is an  immediate consequence from Lemma 4.1 in \cite{OS}.

\begin{lemma} \label{redef}    Let ${\bf u}, {\bf v} \in \mathfrak A^+$  be two words such that ${\bf u} \gamma {\bf v}$.  Let $\Theta: \mathfrak A \rightarrow \mathfrak A^+$ be a substitution with the following  property:

($\ast$)  If $x \in \mul({\bf u})$ then $\Theta(x) = y^m$ for some $m \ge 1$ and $y \in \mathfrak A$. 

Then $\Theta({\bf u}) \gamma \Theta({\bf v})$.

\end{lemma}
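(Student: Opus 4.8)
The plan is to verify separately the two conditions that together make up $\Theta(\mathbf u)\gamma\Theta(\mathbf v)$, namely
\[
\Theta(\mathbf u)\ \tau_1\ \Theta(\mathbf v)\qquad\text{and}\qquad\mul(\Theta(\mathbf u))=\mul(\Theta(\mathbf v)),
\]
and then appeal to the definition of $\gamma$. For the $\tau_1$-part I would reduce to Lemma~4.1 in \cite{OS}. The key observation is that, since $\mathbf u\gamma\mathbf v$ gives $\con(\mathbf u)=\con(\mathbf v)$ and $\mul(\mathbf u)=\mul(\mathbf v)$, every simple letter of $\mathbf u$ occurs exactly once in $\mathbf v$, so its unique island keeps exponent $1$; hence $\mathbf v$ is obtained from $\mathbf u$ by altering only the exponents of islands formed by letters of $\mul(\mathbf u)$, and for each such letter $x$ hypothesis $(\ast)$ makes $\Theta(x)$ a power of a single letter. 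That is exactly the configuration treated by Lemma~4.1 in \cite{OS}, which then yields $\Theta(\mathbf u)\,\tau_1\,\Theta(\mathbf v)$.

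For the multiplicity part — which will be purely combinatorial — I would write $\mathbf u=u_1u_2\cdots u_n$ with each $u_i\in\mathfrak A$, so that $\Theta(\mathbf u)=\Theta(u_1)\cdots\Theta(u_n)$. A letter $z$ is multiple in $\Theta(\mathbf u)$ precisely when one of three transparent conditions holds: $z$ occurs at least twice inside some $\Theta(x)$ with $x\in\con(\mathbf u)$; or $z$ occurs inside $\Theta(x)$ for some $x\in\mul(\mathbf u)$; or $z$ occurs inside $\Theta(x)$ and inside $\Theta(x')$ for two distinct letters $x,x'\in\con(\mathbf u)$. Each of these refers only to the fixed words $\Theta(x)$ and to the sets $\con(\mathbf u)$ and $\mul(\mathbf u)$, which coincide with $\con(\mathbf v)$ and $\mul(\mathbf v)$; therefore $\mul(\Theta(\mathbf u))=\mul(\Theta(\mathbf v))$. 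Combined with the $\tau_1$-part, this gives $\Theta(\mathbf u)\gamma\Theta(\mathbf v)$.

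The substantive step is the $\tau_1$-part, and it is precisely what Lemma~4.1 in \cite{OS} is meant to absorb: when an island $x^p$ with $x\in\mul(\mathbf u)$ is replaced by $x^{p'}$, the corresponding block $\Theta(x)^p=y^{mp}$ becomes $y^{mp'}$, which merges with any neighbouring occurrences of $y$ into a single island of positive exponent, so the number of islands of every letter and their left-to-right arrangement are unchanged. This is also why $(\ast)$ cannot be dropped: for $\mathbf u=xtx$, $\mathbf v=x^2tx$ and $\Theta(x)=ab$ one gets $\Theta(\mathbf u)=ab\,\Theta(t)\,ab$ and $\Theta(\mathbf v)=abab\,\Theta(t)\,ab$, which are not $\tau_1$-related. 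Apart from quoting \cite{OS} correctly and keeping this island bookkeeping straight, I do not anticipate any real difficulty.
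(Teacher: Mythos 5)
Your proposal is correct and follows essentially the same route as the paper, which simply declares the lemma to be an immediate consequence of Lemma~4.1 in \cite{OS}; you defer the substantive $\tau_1$-step to that same lemma and merely spell out the routine island and multiplicity bookkeeping (plus a counterexample showing ($\ast$) is needed) that the paper leaves implicit.
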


The next statement readily follows from  Fact 4.2 in \cite{OS}.

\begin{fact} \label{Eu}   
Given a word $\bf u$ and a substitution $\Theta: \mathfrak A \rightarrow \mathfrak A^+$, one can rename some letters in $\bf u$ so that the resulting word $E({\bf u})$ has the following properties:

(i)  $\con (E({\bf u})) \subseteq  \con ({\bf u})$;

(ii) $\Theta(E({\bf u})) \gamma \Theta ({\bf u})$;

(iii) for every  $x, y \in \con (E({\bf u}))$,  if the words $\Theta(x)$  and  $\Theta(y)$ are powers of the same letter then $x=y$.

\end{fact}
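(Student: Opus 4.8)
The plan is to prove Fact~\ref{Eu} by analyzing the equivalence classes induced on $\con({\bf u})$ by the relation ``$\Theta(x)$ and $\Theta(y)$ are powers of a common letter.'' First I would observe that for each letter $x\in\con({\bf u})$, the word $\Theta(x)\in\mathfrak A^+$ need not be a power of a single letter in general, but the useful structure to extract is the following: say two letters $x,y\in\con({\bf u})$ are \emph{$\Theta$-equivalent} if there exist $z\in\mathfrak A$ and integers $p,q\ge 1$ with $\Theta(x)=z^p$ and $\Theta(y)=z^q$. This is clearly symmetric and reflexive on the set of those letters whose $\Theta$-image happens to be a letter-power, and transitive there as well, so it partitions that set into blocks, each block being associated with a single underlying letter $z$. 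For letters $x$ whose image $\Theta(x)$ is not a power of a single letter, the block is the singleton $\{x\}$.

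Next I would define the renaming $E$: within each $\Theta$-equivalence block $\{x_1,\dots,x_r\}$ (with $\Theta(x_i)=z^{p_i}$), pick a representative, say $x_1$, and rename every occurrence of every $x_i$ in $\bf u$ to $x_1$; letters outside any nontrivial block are left untouched. Call the resulting word $E({\bf u})$. Property~(i), $\con(E({\bf u}))\subseteq\con({\bf u})$, is immediate since renaming only identifies letters already present. Property~(iii) holds by construction: after renaming, two distinct letters of $E({\bf u})$ can never have $\Theta$-images that are powers of the same letter, because that is precisely the relation we collapsed. The only substantive point is property~(ii), $\Theta(E({\bf u}))\,\gamma\,\Theta({\bf u})$. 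Here I would invoke Lemma~\ref{redef} (or rather, track the relevant data directly): when we rename $x_i\mapsto x_1$, we are effectively applying, factor by factor, the fact that $z^{p_i}$ and $z^{p_1}$ are $\tau_1$-related and sit over the same letter $z$, so replacing one by the other changes only island exponents and does not alter which letters are multiple, which are simple, nor adjacency of first/last occurrences in the $\gamma$ sense — actually we need only the $\gamma$ part, i.e.\ that $\mul$ is preserved, since $\gamma = \tau_1\wedge\gamma_1$. The key check is that under $\Theta$, the occurrences of $x_i$ in $\bf u$ contribute blocks of $z$'s that merge with adjacent $z$-blocks exactly as the occurrences of $x_1$ would after renaming, so $\mul(\Theta(E({\bf u})))=\mul(\Theta({\bf u}))$ and the two words are $\tau_1$-related, hence $\gamma$-related.

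The main obstacle I anticipate is the bookkeeping in property~(ii): one must be careful that merging letters within a block does not accidentally create or destroy a multiple letter in the \emph{$\Theta$-image}. For instance, if $x$ occurs once in $\bf u$ and $y$ occurs once in $\bf u$ but $x,y$ are $\Theta$-equivalent, then after renaming $\{x,y\}\mapsto x$, the letter $x$ may occur twice in $E({\bf u})$ — but this is harmless because $\Theta(x)=z^p$, a power of a single letter, so $\Theta(E({\bf u}))$ and $\Theta({\bf u})$ still have the same multiple letters (the underlying $z$ is multiple in $\Theta({\bf u})$ already, or simple there iff $p=1$ and that block occurrence is isolated, and the renaming keeps the $z$-island structure of the image intact). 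Formalizing ``keeps the island structure intact'' is exactly where Lemma~\ref{redef} does the work: the substitution ``undo the renaming'' sends the single letter $x_1$ (which is multiple in $E({\bf u})$ whenever it came from a nontrivial block) to $z^{p_1}$, a power of a single letter, so condition~$(\ast)$ of Lemma~\ref{redef} is met and we get $\gamma$-relatedness of the images. I would also remark that this Fact is quoted as ``readily following from Fact 4.2 in \cite{OS},'' so the intended proof is short, and the presentation should emphasize the construction of $E$ and the appeal to Lemma~\ref{redef} rather than re-deriving the island combinatorics from scratch.
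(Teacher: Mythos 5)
Your construction is correct, and it is essentially the only reasonable one: the paper itself gives no proof of Fact~\ref{Eu} (it is quoted as an immediate consequence of Fact~4.2 in \cite{OS}), so there is nothing in the text to compare against, but your partition of $\con({\bf u})$ by the relation ``$\Theta(x)$ and $\Theta(y)$ are positive powers of a common letter,'' followed by collapsing each block to a representative, is exactly what that citation encapsulates, and your verification of (i) and (iii) is right. One small caution on (ii): Lemma~\ref{redef} cannot literally be invoked here, since its hypothesis is that the two words being substituted into are $\gamma$-related, whereas ${\bf u}$ and $E({\bf u})$ are in general not even $\gamma_0$-related (they have different content). What carries the argument is the direct island bookkeeping you sketch: renaming replaces each factor $z^{p_i}$ of $\Theta({\bf u})$ contributed by a block letter with $z^{p_1}$, both nonempty powers of the same $z$, so the maximal $z$-runs of $\Theta(E({\bf u}))$ and $\Theta({\bf u})$ correspond one-to-one with only their exponents changed, giving $\tau_1$-relatedness; and since a renaming is nontrivial only on a block with at least two letters, each of which occurs in ${\bf u}$ and contributes at least one $z$, the letter $z$ is multiple in both images, so $\mul$ is preserved and $\gamma=\tau_1\wedge\gamma_1$ holds. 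I would state that island argument as the proof of (ii) and drop the appeal to Lemma~\ref{redef} there.
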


\begin{lemma} \cite[Lemma 5.1]{OS} \label{nfblemma}
Let $\tau$ be an equivalence relation on $\mathfrak A^+$ and $S$ be a semigroup.
Suppose that, for infinitely many $n$, $S$ satisfies an identity ${\bf U}_n \approx {\bf V}_n$ in at least $n$ letters
such that ${\bf U}_n$ and ${\bf V}_n$ are not $\tau$-related.

Suppose also that for every identity ${\bf u} \approx {\bf v}$ of $S$ in fewer than $n$ letters, every
 word  $\bf U$ with ${\bf U} \tau {\bf U}_n$ and every substitution
 $\Theta: \mathfrak A \rightarrow \mathfrak A^+$ such that $\Theta({\bf u}) = {\bf U}$, we have
 ${\bf U} \tau \Theta({\bf v})$.  Then $S$ is NFB.

\end{lemma}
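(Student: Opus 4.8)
The plan is to argue by contradiction, running the standard ``rewrite along a derivation'' argument but with the equivalence relation $\tau$ in place of literal equality of words. Suppose $S$ were finitely based and fix a finite identity basis $\Sigma$ of $S$; let $N$ be the largest number of distinct letters occurring in a single identity of $\Sigma$. Among the infinitely many $n$ supplied by the hypotheses I would pick one with $n > N+2$, together with the corresponding identity ${\bf U}_n \approx {\bf V}_n$ in at least $n$ letters for which ${\bf U}_n \not\tau {\bf V}_n$. Since $\Sigma$ is a basis of $S$ and $S \models {\bf U}_n \approx {\bf V}_n$, there is a finite derivation
\[
{\bf U}_n = {\bf W}_0 \to {\bf W}_1 \to \cdots \to {\bf W}_k = {\bf V}_n ,
\]
in which each step rewrites one factor: ${\bf W}_i = {\bf a}_i\,\Theta_i(\ell_i)\,{\bf b}_i$ and ${\bf W}_{i+1} = {\bf a}_i\,\Theta_i(r_i)\,{\bf b}_i$, where ${\bf a}_i,{\bf b}_i$ are (possibly empty) words, $\Theta_i$ is a substitution, and $\ell_i\approx r_i$ is an identity of $\Sigma$ read in one of the two directions.

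The core of the proof is the claim that ${\bf W}_i \,\tau\, {\bf U}_n$ for every $i$, proved by induction on $i$; the base case is reflexivity of $\tau$. For the inductive step, assume ${\bf W}_i \,\tau\, {\bf U}_n$, choose fresh letters $p,q \notin \con(\ell_i)\cup\con(r_i)$, and consider the identity $p\,\ell_i\,q \approx p\,r_i\,q$: it is a consequence of $\ell_i\approx r_i$, hence holds in $S$, and it involves at most $N+2 < n$ letters. Extend $\Theta_i$ to a substitution $\Theta_i'$ by setting $\Theta_i'(p) = {\bf a}_i$ and $\Theta_i'(q) = {\bf b}_i$ (when a context is empty, simply omit the corresponding fresh letter and use $\ell_i q \approx r_i q$, $p\ell_i \approx p r_i$, or $\ell_i \approx r_i$ instead), so that $\Theta_i'(p\,\ell_i\,q) = {\bf W}_i$ and $\Theta_i'(p\,r_i\,q) = {\bf W}_{i+1}$. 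Applying the second hypothesis to this identity of $S$ in fewer than $n$ letters, to the word ${\bf U} := {\bf W}_i$ (which is $\tau$-related to ${\bf U}_n$ by the inductive hypothesis), and to the substitution $\Theta_i'$, I obtain ${\bf W}_i \,\tau\, \Theta_i'(p\,r_i\,q) = {\bf W}_{i+1}$, whence ${\bf W}_{i+1}\,\tau\,{\bf U}_n$ since $\tau$ is symmetric and transitive. This completes the induction, and in particular ${\bf V}_n = {\bf W}_k \,\tau\, {\bf U}_n$, contradicting ${\bf U}_n \not\tau {\bf V}_n$. Hence $S$ is NFB.

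The one step that requires care, and that I regard as the main obstacle, is reconciling the shape of the second hypothesis --- which speaks of a substitution producing the \emph{whole} word ${\bf U}$ --- with the fact that, in a derivation, identities of $\Sigma$ are applied to factors sitting inside arbitrary contexts ${\bf a}_i,{\bf b}_i$. The device above, absorbing the two contexts into the values of two new variables $p$ and $q$, resolves this while keeping the number of letters of the ``contextualised'' identity below $n$; this is precisely why one must take $n > N+2$ rather than merely $n > N$. The remaining points --- existence of the derivation, the harmless reversal of identities (handled by symmetry of $\tau$), and the degenerate cases with empty contexts --- are routine bookkeeping.
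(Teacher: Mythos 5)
Your argument is correct and is essentially the standard derivation-rewriting proof that the paper relies on by citing \cite[Lemma~5.1]{OS} (the paper itself gives no proof, only the reference). The key device---absorbing the contexts ${\bf a}_i,{\bf b}_i$ into fresh letters $p,q$ so that the whole word ${\bf W}_i$ is a substitution instance of an identity of $S$ in fewer than $n$ letters, which is exactly why one needs $n>N+2$---together with your handling of empty contexts and reversed identities, matches the intended argument.
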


We say that a word $\bf u$ is  $xy$-simple if  for each $x\ne y \in \con({\bf u})$,  $xy$ appears at most once in $\bf u$ as a subword.

\begin{sufcon}  \label{SC}  Let  $S$ be a semigroup.  Suppose that for each $n$ big enough one can find an  $xy$-simple  word ${\mathbf U}_n$ in at least $n$ letters such that ${\mathbf U}_n$ is not a $\gamma$-term for $S$.   Suppose also
that every word $\bf u$ involving less than, say, $n$ letters,  is a $\gamma$-term for $S$ whenever  
 $\Theta({\bf u}) = {\bf U}$ for any ${\bf U} \gamma {\bf U}_n$  
  and substitution $\Theta: \mathfrak A \rightarrow \mathfrak A^+$  such that:

($\star$)  for every  $x, y \in \con ({\bf u})$,  if  $\Theta(x)$  and  $\Theta(y)$ are powers of the same letter then $x=y$.

 Then $S$ is NFB.

\end{sufcon}

\begin{proof} Let $n$ be big enough. Since  ${\mathbf U}_n$ is not a $\gamma$-term for $S$, the semigroup  $S$ satisfies an identity
${\bf U}_n \approx {\bf V}_n$ in at least $n$ letters
such that ${\bf U}_n$ and ${\bf V}_n$ are not $\gamma$-related.

Let ${\bf u} \approx {\bf v}$ be an identity of $S$ in fewer than $n$ letters. Let $\bf U$ be a word such that ${\bf U} \gamma {\bf U}_n$ and $\Theta: \mathfrak A \rightarrow \mathfrak A^+$ be a substitution such that $\Theta({\bf u}) = {\bf U}$.
Let  $E({\bf u})$ denote the word obtained from $\bf u$ by renaming some letters so that  $E({\bf u})$ satisfies Properties (i)--(iii) in Fact~\ref{Eu}, that is:

(i) $|\con(E({\bf u}))|  \le |\con({\bf u})| < n$;  

(ii) $\Theta(E({\bf u})) \gamma {\bf U}$ ;

(iii) for each  $x, y \in \con (E({\bf u}))$,  if  $\Theta(x)$  and  $\Theta(y)$ are powers of the same letter then $x=y$.

 Then $E({\bf u})$ is a $\gamma$-term for $S$ by our assumptions.  Hence $E({\bf u}) \gamma E({\bf v})$.

Since ${\mathbf U}_n$ is $xy$-simple and   $\Theta(E({\bf u})) \tau_1 {\bf U}_n$, the word  $\Theta(E({\bf u}))$ is  also $xy$-simple. Hence for each $x \in \mul(E({\bf u}))$, the value of $\Theta(x)$ is a power of a letter.  Thus  Property ($\ast$) in Lemma~\ref{redef} holds and we have
\[ {\bf U} = \Theta({\bf u}) \stackrel{Fact~\ref{Eu}}{\gamma} \Theta (E ({\bf u})) \stackrel{Lemma~\ref{redef}} {\gamma} \Theta (E ({\bf v})) \stackrel{Fact~\ref{Eu}}{\gamma} \Theta({\bf v}).\]
Therefore, $S$ is NFB by Lemma \ref{nfblemma}.
\end{proof}

If letter $x$ forms two islands in a word $\bf u$, we denote these islands by   ${_1x^+}$ and ${_2x^+}$. For example, if ${\bf u} = y^2xz^5x^3y$ then  ${_1x^+} = x$ and ${_2x^+} = x^3$.

\begin{sufcon}
\label{SC: for A}
Let $M$ be a monoid such that for each $n>1$ the word 
${\mathbf U}_n=x y_1^2y_2^2\cdots y^2_{n-1} y_n^2x$ is not a $\gamma$-term for $M$, but   $(\mathtt{a^+ \circ_\gamma t  \circ_\gamma b^+ \circ_\gamma a^+})$ and  \\ $(\mathtt{a^+   \circ_\gamma b^+ \circ_\gamma t \circ_\gamma a^+})$ 
 are $\gamma$-terms for $M$. Then $M$ is NFB.
\end{sufcon}

\begin{proof} 
 Let $\bf u$ be a word with $|\con({\bf u})| < n$ and $\Theta: \mathfrak A \rightarrow \mathfrak A^+$ be a substitution  such that $\Theta({\bf u}) = {\bf U}$ for some ${\bf U} \gamma {\bf U}_n$, and 

($\star$)  for every  $x, y \in \con ({\bf u})$,  if  $\Theta(x)$  and  $\Theta(y)$ are powers of the same letter then $x=y$.

 Since  ${\mathbf U}_n$ is $xy$-simple, the word  ${\mathbf U}$ is also $xy$-simple.
Consequently, for each $z \in \mul({\bf u})$  either $\Theta(z)=x^p$ or  $\Theta(z)=y_i^q$ for some $p, q \ge 1$ and $1 \le i \le n$.
Property ($\star$) implies the following:

(i)  if  $\Theta(z)=y_i^q$  then $z$ forms one island in $\bf u$;

(ii) if  $\Theta(z)=x^p$ then $z$ forms at most two islands in $\bf u$;

(iii)  if some letter $z$ forms two islands in $\bf u$, then  $\Theta(z)=x^p$ and $z$ is the only letter in $\bf u$
which forms two islands in $\bf u$.

Hence only two cases are possible.

{\bf Case 1}: every letter in $\bf u$ forms only one island in $\bf u$.

In this case, $\bf u$ is a $\gamma$-term for $M$ because $(\mathtt{a^+ \circ_\gamma b^+})$ is a $\gamma$-term for $M$ by Corollary~\ref{C: taum}.

{\bf Case 2}:  $\bf u$ contains a unique letter $z$ which forms two islands  ${_1z^+}$ and ${_2z^+}$    in $\bf u$.

In this case, $\Theta(z)=x^p$. 
Since $\bf U$ is $xy$-simple and $\bf u$ involves less than $n$ letters,
the word $\bf u$ satisfies the following:

(P1)  there is a simple letter $t$ in $\bf u$ between  ${_1z^+}$ and ${_2z^+}$.

Since for each $y \in \mul({\bf u})$ with  $y \ne  z$  we have  $\Theta(y)=y_i^q$, the word  $\bf u$ also satisfies the following:

(P2)  if $y \in \mul({\bf u})$ and $y \ne  z$ then $y$ forms only one island in $\bf u$ and ${\bf u}(z, y) = z^{k_1}y^{k_2}z^{k_3}$ for some $k_1, k_3 \ge 1$ and $k_2 \ge 2$.

Since  both  $(\mathtt{a^+ \circ_\gamma t  \circ_\gamma b^+ \circ_\gamma a^+}) = \{a^{k_1}tb^{k_2}a^{k_3} \mid  k_1, k_3 \ge 1, k_2 \ge 2\}$ and  \\ $(\mathtt{a^+   \circ_\gamma b^+ \circ_\gamma t \circ_\gamma a^+}) = \{a^{k_1}b^{k_2}t a^{k_3} \mid  k_1, k_3 \ge 1, k_2 \ge 2\}$  are $\gamma$-terms for $M$, any word $\bf u$ that satisfies Properties (P1) and (P2)
must be a $\gamma$-term for $M$.

Since $\bf u$  is a $\gamma$-term for $M$   in every case, the monoid $M$ is NFB by Sufficient Condition~\ref{SC}.
\end{proof}

\begin{cor} \label{main2}  The monoid $M_\gamma(\mathtt{a^+b^+ta^+, a^+tb^+a^+})$ is NFB.

\end{cor}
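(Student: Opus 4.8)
\textbf{Proof proposal for Corollary~\ref{main2}.}
The plan is to verify that the monoid $M_\gamma(\mathtt{a^+b^+ta^+, a^+tb^+a^+})$ meets the hypotheses of Sufficient Condition~\ref{SC: for A}; that condition then immediately yields that the monoid is NFB. So there are exactly two things to check: first, that for every $n>1$ the word ${\mathbf U}_n = x y_1^2 y_2^2 \cdots y_{n-1}^2 y_n^2 x$ is \emph{not} a $\gamma$-term for $M_\gamma(\mathtt{a^+b^+ta^+, a^+tb^+a^+})$; and second, that both $\gamma$-words $(\mathtt{a^+ \circ_\gamma t \circ_\gamma b^+ \circ_\gamma a^+})$ and $(\mathtt{a^+ \circ_\gamma b^+ \circ_\gamma t \circ_\gamma a^+})$ \emph{are} $\gamma$-terms for it.

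The second point is the easy one and follows from Corollary~\ref{C: MW1} applied with $\mathtt W = \{\mathtt{a^+b^+ta^+}, \mathtt{a^+tb^+a^+}\}$: by definition a monoid variety contains $M_\gamma(\mathtt W)$ iff every $\tau$-word in $\mathtt W$ is a $\gamma$-term for it, and $M_\gamma(\mathtt W)$ trivially lies in the variety it generates, so each of $(\mathtt{a^+ \circ_\gamma t \circ_\gamma b^+ \circ_\gamma a^+})$ and $(\mathtt{a^+ \circ_\gamma b^+ \circ_\gamma t \circ_\gamma a^+})$ is a $\gamma$-term for $M_\gamma(\mathtt{a^+b^+ta^+, a^+tb^+a^+})$.

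For the first point I would argue that $M_\gamma(\mathtt{a^+b^+ta^+, a^+tb^+a^+})$ satisfies the identity system of Sufficient Condition~\ref{SG} — equivalently, it satisfies some identity ${\mathbf U}_n \approx {\mathbf V}_n$ for each $n$ in which ${\mathbf U}_n$ and ${\mathbf V}_n$ are not $\gamma$-related — and hence ${\mathbf U}_n$ is not a $\gamma$-term. Concretely: a word ${\mathbf u}$ is a $\gamma$-term for $M_\gamma(\mathtt W)$ iff it lies in $\mathtt W^{\le_\gamma}$ (as a $\gamma$-class), i.e.\ iff the $\gamma$-class $H_\gamma({\mathbf u})$ is $\le_\gamma$-below one of $\mathtt{a^+b^+ta^+}$, $\mathtt{a^+tb^+a^+}$; otherwise $H_\gamma({\mathbf u}) = 0$ in $M_\gamma(\mathtt W)$, and then any two words mapping to $0$ — in particular ${\mathbf U}_n$ and any $\gamma$-inequivalent rearrangement of it — give a non-trivial identity. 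Since ${\mathbf U}_n$ for $n \ge 2$ has two islands of $x$ separated by a multiple letter block, and more than one multiple letter, it cannot embed (even up to $\gamma$) into $\mathtt{a^+b^+ta^+}$ or $\mathtt{a^+tb^+a^+}$, so $H_\gamma({\mathbf U}_n) = 0$; taking ${\mathbf V}_n = x y_n^2 y_{n-1}^2 \cdots y_1^2 x$ (reversal of the middle), which is also sent to $0$ and is not $\gamma$-equivalent to ${\mathbf U}_n$, exhibits the required identity. This verifies that ${\mathbf U}_n$ is not a $\gamma$-term for $M_\gamma(\mathtt{a^+b^+ta^+, a^+tb^+a^+})$.

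The main obstacle — really the only place demanding care — is the bookkeeping in showing $H_\gamma({\mathbf U}_n) \notin \mathtt W^{\le_\gamma}$ cleanly, i.e.\ that ${\mathbf U}_n$ genuinely evaluates to $0$ in the monoid: one must confirm via Lemma~\ref{subword} that no word of the form $x y_1^2 \cdots y_n^2 x$ is a subword of any word in the $\gamma$-classes $\mathtt{a^+b^+ta^+}$ or $\mathtt{a^+tb^+a^+}$ (each of which has exactly two islands of one fixed letter, one island of a second multiple letter, and one simple letter, hence involves at most two multiple letters and in particular cannot contain $n \ge 2$ distinct squared letters between two $x$'s). Once that embedding fact is nailed down, both hypotheses of Sufficient Condition~\ref{SC: for A} hold and the corollary follows at once.
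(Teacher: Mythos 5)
Your overall strategy (verify the two hypotheses of Sufficient Condition~\ref{SC: for A}) is exactly the paper's, and your verification of the second hypothesis --- that $(\mathtt{a^+\circ_\gamma t\circ_\gamma b^+\circ_\gamma a^+})$ and $(\mathtt{a^+\circ_\gamma b^+\circ_\gamma t\circ_\gamma a^+})$ are $\gamma$-terms for $M=M_\gamma(\mathtt{a^+b^+ta^+, a^+tb^+a^+})$ --- is correct and is what the paper does via Proposition~\ref{P: MW}. The gap is in the first hypothesis. You argue that since ${\bf U}_n$ and its reversal ${\bf V}_n=xy_n^2\cdots y_1^2x$ both evaluate to $0$ in $M$ under the canonical substitution, ${\bf U}_n\approx{\bf V}_n$ must be an identity of $M$. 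That inference is invalid: an identity must hold under \emph{every} substitution into $M$, including those sending some letters to $1$ or to nonzero elements. In fact ${\bf U}_n\approx{\bf V}_n$ fails in $M$: take $n=2$ and $\Theta(x)=1$, $\Theta(y_1)=\mathtt a$, $\Theta(y_2)=\mathtt b$. Then $\Theta({\bf U}_2)=\mathtt{a^+b^+}$ and $\Theta({\bf V}_2)=\mathtt{b^+a^+}$; both are nonzero in $M$ (since $a^2b^2$ is a subword of $a^2b^2ta^2$ and $b^2a^2$ is a subword of $a^2tb^2a^2$, so both classes lie in $\mathtt W^{\le_\gamma}$ by Lemma~\ref{subword}), and they are distinct elements. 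So your chosen ${\bf V}_n$ does not witness that ${\bf U}_n$ fails to be a $\gamma$-term, and the identity you need is simply not there.

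The repair is to choose ${\bf V}_n$ as the paper does: $M$ satisfies, for instance, $xy_1^2y_2^2\cdots y_n^2x\approx xy_1^2xy_2^2\cdots y_n^2x$ or $xy_1^2\cdots y_{n-1}^2y_nxy_n$, and each right-hand side is not $\gamma$-related to ${\bf U}_n$. These work, while the reversal does not, precisely because of the observation the paper leads with: in every word of the two generating $\gamma$-classes there is a simple letter between the two islands of the doubly-occurring letter, whereas in ${\bf U}_n$ the two islands of $x$ are separated only by multiple letters; this is what forces every substitution to send both sides to the same element. Relatedly, your blanket claim that ``${\bf u}$ is a $\gamma$-term for $M_\gamma(\mathtt W)$ if and only if $H_\gamma({\bf u})\in\mathtt W^{\le_\gamma}$'' is justified by Proposition~\ref{P: MW} only in the ``if'' direction; the ``only if'' direction is not automatic, and establishing it for ${\bf U}_n$ is exactly the step that requires exhibiting a concrete valid identity with $\gamma$-inequivalent sides.
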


\begin{proof}  Since there is a simple letter $t$ between the two islands formed by $a$ in each word in  $(\mathtt{a^+ \circ_\gamma t  \circ_\gamma b^+ \circ_\gamma a^+})$ and in  $(\mathtt{a^+   \circ_\gamma b^+ \circ_\gamma t \circ_\gamma a^+})$,
it is easy to see that ${\mathbf U}_n=x y_1^2y_2^2\cdots y^2_{n-1} y_n^2x$ is not a $\gamma$-term for
$M=M_\gamma(\mathtt{a^+b^+ta^+, a^+tb^+a^+})$.  For example,  $M$ satisfies 
\[x y_1^2y_2^2\cdots y^2_{n-1} y_n^2x\approx  xy_1^2xy_2^2\cdots y^2_{n-1} y_n^2x\] or
\[x y_1^2y_2^2\cdots y^2_{n-1} y_n^2x\approx x y_1^2y_2^2\cdots y^2_{n-1}y_nxy_n\]
Since  $(\mathtt{a^+ \circ_\gamma t  \circ_\gamma b^+ \circ_\gamma a^+})$ and $(\mathtt{a^+   \circ_\gamma b^+ \circ_\gamma t \circ_\gamma a^+})$, are $\gamma$-terms for $M$ by Proposition~\ref{P: MW}, the monoid $M$ 
is NFB by Sufficient Condition~\ref{SC: for A}.
\end{proof}

Since the monoids $M_\gamma(\mathtt{a^+b^+ta^+, a^+tb^+a^+})$ and $A^1 \times \overline{A}^1$ are equationally equivalent by Theorem~\ref{main1}(iv),
Corollary~\ref{main2} gives us the following.

\begin{cor} \cite[Theorem~6.2]{WTZ2} \label{main3}  The monoid  $A^1 \times \overline{A}^1$  is NFB.

\end{cor}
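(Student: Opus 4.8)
The plan is to deduce this statement at once from Corollary~\ref{main2} together with the identification of varieties recorded in Theorem~\ref{main1}(iv). The guiding observation is that the finite basis property is determined by the equational theory alone: two monoids that satisfy exactly the same identities are either both FB or both NFB, since any finite identity basis for one would then be a finite identity basis for the other.

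First I would pass from the monoid $A^1\times\overline{A}^1$ to the variety it generates. Using the standard fact that the variety generated by a direct product is the join of the varieties generated by the factors, we have $\var(A^1\times\overline{A}^1)=\var(A^1)\vee\var(\overline{A}^1)=\mathbb A\vee\overline{\mathbb A}$. By Theorem~\ref{main1}(iv), $\mathbb A\vee\overline{\mathbb A}=\mathbb M_\gamma(\mathtt{a^+b^+ta^+, a^+tb^+a^+})$, which is by definition the variety generated by $M_\gamma(\mathtt{a^+b^+ta^+, a^+tb^+a^+})$. Hence $A^1\times\overline{A}^1$ and $M_\gamma(\mathtt{a^+b^+ta^+, a^+tb^+a^+})$ generate the same variety and therefore satisfy precisely the same identities.

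Now I would invoke Corollary~\ref{main2}, which asserts that $M_\gamma(\mathtt{a^+b^+ta^+, a^+tb^+a^+})$ is NFB, and transport the conclusion across the equational equivalence: a finite subset $\Sigma$ of the identities of $A^1\times\overline{A}^1$ is also a subset of the identities of $M_\gamma(\mathtt{a^+b^+ta^+, a^+tb^+a^+})$, so $\Sigma$ cannot axiomatize the identities of the latter monoid, and consequently cannot axiomatize those of the former. Thus $A^1\times\overline{A}^1$ is NFB. I do not expect any genuine obstacle at this stage: all the substantive work has already been done in Sufficient Condition~\ref{SC: for A} and Corollary~\ref{main2}, and in establishing Theorem~\ref{main1}(iv); the present argument is purely the translation of an equational-equivalence fact into a statement about bases, which completes the short reproof of Theorem~6.2 in \cite{WTZ2}.
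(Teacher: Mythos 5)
Your argument is exactly the paper's: the paper likewise deduces the corollary by observing that $A^1\times\overline{A}^1$ and $M_\gamma(\mathtt{a^+b^+ta^+, a^+tb^+a^+})$ are equationally equivalent by Theorem~\ref{main1}(iv) and then transporting the NFB conclusion of Corollary~\ref{main2}. Your proposal is correct and adds only the (standard, harmless) explicit justification that equationally equivalent monoids share the finite basis property.
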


\section{Sets of 2-island-limited words}

 Let $_{i{\bf u}}x$ denote the $i$th from the left occurrence of $x$ in a word ${\bf u}$. 
For each $k \ge 1$ and  every ${\bf u}, {\bf v} \in \mathfrak A^\ast$   define:  
\begin{itemize}
\item

${\bf u} \lambda_{k} {\bf v}$  $\Leftrightarrow$  for each $i \le k$ the occurrences $_{i\mathbf u}x$ and $_{(i+1)\mathbf u}x$
are adjacent in $\bf u$ if and only if  the occurrences $_{i\mathbf v}x$ and $_{(i+1)\mathbf v}x$ 
are adjacent in $\bf v$.

\end{itemize}
Let $\rho_k$ denote the equivalence relation dual to $\lambda_{k}$.

\begin{fact} For each $k\ge 1$ the relation $\tau_1 \wedge \lambda_k$ is a congruence on $\mathfrak A^\ast$ which partitions $\gamma_k$.
\end{fact}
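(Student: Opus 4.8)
The plan is to reduce both assertions — that $\tau_1\wedge\lambda_k$ is a congruence and that $\tau_1\wedge\lambda_k\subseteq\gamma_k$ (this is what ``partitions'' means, as in Lemma~\ref{twoletters}) — to a single combinatorial bookkeeping device. For a word $\mathbf w$ and a letter $x$ write $|\mathbf w|_x$ for the number of occurrences of $x$ in $\mathbf w$, and let $B_k(x,\mathbf w)\subseteq\{1,\dots,k\}$ be the set of $i\le k$ for which the $i$th and $(i+1)$th occurrences of $x$ both exist in $\mathbf w$ and are adjacent, so that $\mathbf u\,\lambda_k\,\mathbf v$ means $B_k(x,\mathbf u)=B_k(x,\mathbf v)$ for every $x$. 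Two elementary observations are needed. First: the $j$th and $(j+1)$th occurrences of $x$ are adjacent in $\mathbf w$ precisely when they lie in the same island of $x$; hence if $x$ forms $r$ islands of lengths $a_1,\dots,a_r$ in $\mathbf w$ and $c=a_1+\dots+a_r=|\mathbf w|_x$, then the positions $j$ with $j$-th and $(j+1)$-th occurrence adjacent are exactly $\{1,\dots,c\}\setminus P_{\mathbf w}$, where $P_{\mathbf w}=\{a_1,\ a_1+a_2,\ \dots,\ a_1+\dots+a_r\}$, a set of $c-r$ integers all of which are $<c$. Second: the predicates ``$x\in\con(\mathbf w)$'', ``$\mathbf w$ begins with $x$'', ``$\mathbf w$ ends with $x$'', and ``$x$ forms $r$ islands in $\mathbf w$'' are $\tau_1$-invariant, since $\tau_1$ only alters island lengths; in particular the empty word $1$ is $\tau_1$-related only to itself.

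The crucial step is the following \emph{key lemma}: if $\mathbf u\,\tau_1\,\mathbf v$, $\mathbf u\,\lambda_k\,\mathbf v$ and $|\mathbf u|_x\le k$, then $|\mathbf v|_x=|\mathbf u|_x$. Indeed, let $r$ be the common number of islands of $x$ in $\mathbf u$ and $\mathbf v$. By the first observation, since $|\mathbf u|_x\le k$ all positions counted lie below $k$, so $|B_k(x,\mathbf u)|=|\mathbf u|_x-r$, whence $|B_k(x,\mathbf v)|=|\mathbf u|_x-r$ as well. If $|\mathbf v|_x>k$ then $B_k(x,\mathbf v)=\{1,\dots,k\}\setminus P_{\mathbf v}$ and, since $\max P_{\mathbf v}=|\mathbf v|_x>k$, at most $r-1$ elements of $P_{\mathbf v}$ are $\le k$, giving $|B_k(x,\mathbf v)|\ge k-r+1>|\mathbf u|_x-r$, a contradiction; so $|\mathbf v|_x\le k$, and then $|B_k(x,\mathbf v)|=|\mathbf v|_x-r$ forces $|\mathbf v|_x=|\mathbf u|_x$. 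The inclusion $\tau_1\wedge\lambda_k\subseteq\gamma_k$ is now immediate: $\con_0$ is $\tau_1$-invariant, and for $1\le i\le k$ the key lemma applied symmetrically gives $|\mathbf u|_x=i\iff|\mathbf v|_x=i$, i.e.\ $\con_i(\mathbf u)=\con_i(\mathbf v)$.

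For the congruence property, note that $\tau_1\wedge\lambda_k$ is an equivalence relation and $\tau_1$ is already a congruence, so it suffices to prove: $\mathbf u\,\tau_1\,\mathbf v$ and $\mathbf u\,\lambda_k\,\mathbf v$ imply $\mathbf c\mathbf u\mathbf d\,\lambda_k\,\mathbf c\mathbf v\mathbf d$ for all $\mathbf c,\mathbf d\in\mathfrak A^\ast$. We may assume $\mathbf u\neq 1$, hence $\mathbf v\neq1$. The input is the way consecutive occurrences of a fixed letter $x$ distribute over a concatenation: in $\mathbf p\mathbf q$ the first $|\mathbf p|_x$ occurrences of $x$ come from $\mathbf p$ and the rest from $\mathbf q$, so for $i+1\le|\mathbf p|_x$ adjacency in $\mathbf p\mathbf q$ is adjacency in $\mathbf p$, for $i>|\mathbf p|_x$ it is adjacency of occurrences $i-|\mathbf p|_x$ and $i-|\mathbf p|_x+1$ in $\mathbf q$, and at $i=|\mathbf p|_x$ (with $x\in\con(\mathbf q)$) it is ``$\mathbf p$ ends with $x$ and $\mathbf q$ begins with $x$''. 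Fix $x$ and $i\le k$ and put $m=|\mathbf c|_x$. If $|\mathbf u|_x=|\mathbf v|_x$ the occurrences of $x$ in $\mathbf c\mathbf u\mathbf d$ and in $\mathbf c\mathbf v\mathbf d$ are index-aligned, and applying the concatenation analysis reduces the comparison at position $i$ either to a comparison inside $\mathbf c$ or inside $\mathbf d$ (identical), or to the boundary conditions of the second observation (equal by $\tau_1$-invariance of ``begins/ends with $x$''), or to positions $i-m\le k$ inside $\mathbf u$ versus $\mathbf v$ (equal by $\mathbf u\,\lambda_k\,\mathbf v$). If $|\mathbf u|_x\neq|\mathbf v|_x$, the key lemma forces $|\mathbf u|_x,|\mathbf v|_x>k\ge i$, so the $i$th and $(i+1)$th occurrences of $x$ in $\mathbf c\mathbf u\mathbf d$ lie within $\mathbf c$, or on the $\mathbf c$--$\mathbf u$ boundary, or within $\mathbf u$ (since $i-m\le k<|\mathbf u|_x$ and $i+1-m\le k+1\le|\mathbf u|_x$), never reaching $\mathbf d$ — and likewise for $\mathbf c\mathbf v\mathbf d$ — so again the comparison reduces to $\mathbf c$, to a $\tau_1$-invariant boundary condition, or to positions $i-m\le k$ inside $\mathbf u$ versus $\mathbf v$. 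In every case the adjacency bits agree, so $\mathbf c\mathbf u\mathbf d\,\lambda_k\,\mathbf c\mathbf v\mathbf d$.

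The one genuinely non-formal ingredient is the counting argument in the key lemma; everything else is routine, and I expect the only real effort in the write-up to be the careful bookkeeping of which block ($\mathbf c$, $\mathbf u/\mathbf v$, or $\mathbf d$) each of the two tracked occurrences falls into, together with the degenerate cases $\mathbf u=1$ and $x\notin\con(\mathbf u)$, which are disposed of by $\tau_1$-invariance.
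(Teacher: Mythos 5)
Your proof is correct and follows essentially the same route as the paper's: $\tau_1$-invariance of the first/last letters and of the number of islands of each letter, combined with adjacency bookkeeping across the concatenation. The paper's version is far terser (the congruence part is dismissed as ``straightforward to verify'' and the partition part simply asserts that the exponents of corresponding islands coincide), so your key lemma's cardinality count --- ruling out $|\mathbf v|_x>k$ when $|\mathbf u|_x\le k$ --- is a rigorous filling-in of a step the paper glosses over rather than a different approach.
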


\begin{proof} First notice that  two  $\tau_1$-related words begin and end with the same letters. Using this fact, it is straightforward to verify
that $\tau_1\wedge \lambda_{k}$ relation is stable under multiplication in  $\mathfrak A^\ast$.

To see that the congruence $\tau_1 \wedge \lambda_k$ partitions $\gamma_k$,
suppose that ${\bf u} (\tau_1\wedge \lambda_{k}) {\bf v}$. Assume that some letter $x$ occurs $1 \le n \le k$ times in $\bf u$.
Since  ${\bf u} \tau_1 {\bf v}$, the number of islands formed by $x$ in $\bf v$ is the same as the number of islands formed by $x$ in $\bf u$.
Since  ${\bf u} \lambda_{k} {\bf v}$ the exponents of the corresponding islands formed by $x$ in $\bf u$ and $\bf v$ are the same. Hence 
$x$ occurs $n$ times in $\bf v$. Since $x$ is arbitrary,  ${\bf u} \gamma_{k} {\bf v}$.
\end{proof}

Notice that $\lambda = \tau_1 \wedge \lambda_1$ and $\rho = \tau_1 \wedge \rho_1$ where $\lambda$ and $\rho$ are the equivalence relations defined in the introduction.

 Given $k\ge 1$ we say that  ${\mathbf u} \in \mathfrak A^*$ is  {\em $k$-island-limited}  if each letter forms at most $k$ islands in $\bf u$. For example, the word $x^7yty^3$ is 2-island-limited. 
If $\tau$ is a congruence that partitions $\tau_1$ and $\mathtt u$ is a $\tau$-class in 
$\mathfrak A^*/\tau$, then it is easy to see that some word in $\mathtt u$ is  $k$-island-limited  if and only if  every word in $\mathtt u$ is  $k$-island-limited. In this case, we say that the $\tau$-word $\mathtt u$ is
 {\em $k$-island-limited}.

 Let $\bf u$ be an 1-island-limited word.
If ${\bf u} \tau {\bf v}$ for some  $\tau \in \{\gamma, \lambda, \rho\}$, then it is easy to see that  ${\bf u} \tau {\bf v}$ for each $\tau \in \{\gamma, \lambda, \rho\}$.
 For example,  $(\mathtt{x^+\circ_\gamma y \circ_\gamma z \circ_\gamma p^+ \circ_\gamma t})  = (\mathtt{x^+\circ_\lambda y \circ_\lambda z \circ_\lambda p^+ \circ_\lambda t})  = (\mathtt{x^+\circ_\rho y \circ_\rho z \circ_\rho p^+ \circ_\rho t})  = 
\{x^nyzp^mt \mid n,m \ge 2\}$ is  a  $\gamma$-class, $\lambda$-class and $\rho$-class.
Thus we have the following.

\begin{fact}\label{F: isom}  Let $\tau \in \{\gamma, \lambda, \rho\}$ and
  $\mathtt{W}$  be a set of  1-island-limited $\tau$-words.
Then  $M_\gamma(\mathtt W) \cong M_\lambda(\mathtt W) \cong M_\rho(\mathtt W)$.

\end{fact}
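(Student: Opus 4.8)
The plan is to reduce the isomorphism $M_\gamma(\mathtt W)\cong M_\lambda(\mathtt W)\cong M_\rho(\mathtt W)$ to the already-observed fact that on $1$-island-limited words the three congruences $\gamma$, $\lambda$, $\rho$ agree. Recall from \eqref{mult in MW} that, for any congruence $\tau$ and any set of $\tau$-words $\mathtt W$, the monoid $M_\tau(\mathtt W)$ has underlying set $\mathtt W^{\le_\tau}\cup\{0\}$ with multiplication $\mathtt u\star_\tau\mathtt v = \mathtt u\circ_\tau\mathtt v$ if this product lies in $\mathtt W^{\le_\tau}$ and $0$ otherwise. So it suffices to produce, for $\tau,\tau'\in\{\gamma,\lambda,\rho\}$, a bijection $\mathtt W^{\le_\tau}\cup\{0\}\to\mathtt W^{\le_{\tau'}}\cup\{0\}$ that fixes $0$ and respects the two multiplications.

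First I would check that every element of $\mathtt W^{\le_\tau}$ is $1$-island-limited. Each $\mathtt w\in\mathtt W$ is $1$-island-limited by hypothesis, and if $\mathtt v\le_\tau\mathtt w$ then, by Lemma~\ref{subword}, some word ${\bf v}\in\mathtt v$ is a subword of some word ${\bf u}\in\mathtt w$; a subword of a $1$-island-limited word is $1$-island-limited (passing to a factor can only merge or delete islands, never split one), so $\mathtt v$ is $1$-island-limited. Next, the passage noted just before the statement shows that a $1$-island-limited word ${\bf u}$ has ${\bf u}\,\gamma\,{\bf v}\iff{\bf u}\,\lambda\,{\bf v}\iff{\bf u}\,\rho\,{\bf v}$; hence a $1$-island-limited $\tau$-class and the $1$-island-limited $\tau'$-class through the same word contain exactly the same words. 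This gives a canonical identification: send the $\tau$-class $\mathtt u$ (which is $1$-island-limited) to the $\tau'$-class $\phi(\mathtt u)$ with the same underlying set of words, and send $0\mapsto 0$. In particular $\phi$ carries $\mathtt W$ (viewed inside $\mathfrak A^*/\tau$) onto $\mathtt W$ (viewed inside $\mathfrak A^*/\tau'$).

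It remains to verify that $\phi$ is a bijection of $\mathtt W^{\le_\tau}\cup\{0\}$ onto $\mathtt W^{\le_{\tau'}}\cup\{0\}$ and a homomorphism. For the order-closures: if $\mathtt v\le_\tau\mathtt w$ with $\mathtt w\in\mathtt W$, pick ${\bf v}\le{\bf u}$ with ${\bf v}\in\mathtt v$, ${\bf u}\in\mathtt w$ as above; then ${\bf v}\in\phi(\mathtt v)$ and ${\bf u}\in\phi(\mathtt w)\in\mathtt W$, so $\phi(\mathtt v)\le_{\tau'}\phi(\mathtt w)$ by Lemma~\ref{subword}(iii)$\Rightarrow$(i), i.e. $\phi(\mathtt v)\in\mathtt W^{\le_{\tau'}}$; by symmetry $\phi$ is onto, and it is injective since distinct $1$-island-limited $\tau$-classes have distinct underlying word-sets. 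For multiplicativity, take $\mathtt u,\mathtt v\in\mathtt W^{\le_\tau}$. A product ${\bf u}\,{\bf v}$ of two $1$-island-limited words need not be $1$-island-limited, but one checks that $\mathtt u\circ_\tau\mathtt v\in\mathtt W^{\le_\tau}$ forces it to be (it is a factor of some $\mathtt w\in\mathtt W$), and in that case the concatenation of representatives behaves identically for $\tau$ and $\tau'$; so $\mathtt u\circ_\tau\mathtt v\in\mathtt W^{\le_\tau}$ iff $\phi(\mathtt u)\circ_{\tau'}\phi(\mathtt v)\in\mathtt W^{\le_{\tau'}}$, and when it holds $\phi(\mathtt u\circ_\tau\mathtt v)=\phi(\mathtt u)\circ_{\tau'}\phi(\mathtt v)$; otherwise both products are $0$. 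Hence $\phi$ is an isomorphism.

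The main obstacle is the bookkeeping in the multiplicative step: one must argue cleanly that whenever a product of two elements of $\mathtt W^{\le_\tau}$ lands inside $\mathtt W^{\le_\tau}$ (rather than at $0$), the relevant words are $1$-island-limited, so that the ``$\tau=\tau'$ on $1$-island-limited words'' principle applies to the product as well as to the factors; and to pin down that the $\circ_\tau$ and $\circ_{\tau'}$ operations literally agree on such classes because they are both computed by concatenating representatives and the resulting $\tau$- and $\tau'$-classes coincide. Everything else is routine from Lemma~\ref{subword} and \eqref{mult in MW}.
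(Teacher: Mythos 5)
Your proposal is correct and follows essentially the same route as the paper, which simply observes that $\gamma$, $\lambda$ and $\rho$ coincide on $1$-island-limited words and leaves the induced identification of the Rees quotients implicit. You have merely spelled out the details (closure of $1$-island-limitedness under $\le_\tau$ and the check that the $\star_\tau$ multiplications agree) that the paper treats as immediate.
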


For example, $M_\gamma(\mathtt{a^+b^+}) \cong  M_\lambda(\mathtt{a^+b^+}) \cong M_\rho(\mathtt{a^+b^+})$.

\begin{lemma}  \label{L: twoislands}

  Let $\mathtt u$ be a 2-island-limited $\lambda$-class of $\mathfrak A^*$  and  ${\mathtt v} \le_{\lambda} {\mathtt u}$. If $\mathtt u$ is a $\lambda$-term  for a monoid variety $\vv$ then ${\mathtt v}$ is also a $\lambda$-term for $\vv$.

\end{lemma}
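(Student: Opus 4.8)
My plan is to follow the pattern of Corollary~\ref{C: taum}, reducing to the claim that a subword of a $2$-island-limited $\lambda$-term is a $\lambda$-term, and then to compensate for the fact that (unlike $\gamma$) the relation $\lambda$ distinguishes a length-$1$ first island from a longer one by ``relocating'' a hypothetical violation inside the class $\mathtt u$ itself. If $\mathtt u$ involves at most one letter, so does $\mathtt v$, the $\lambda$-class of any such word coincides with its $\gamma$-class, and the claim is immediate from Corollary~\ref{C: taum}; so assume every word of $\mathtt u$ has at least two distinct letters. By Lemma~\ref{subword}, a given ${\bf v}\in\mathtt v$ is a subword of some ${\bf u}_1\in\mathtt u$, say ${\bf u}_1={\bf c}{\bf v}{\bf d}$; note ${\bf u}_1$ is $2$-island-limited, hence so is ${\bf v}$. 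Since ${\bf u}_1$ is a $\lambda$-term it is a $\gamma$-term and a $\tau_1$-term by Observation~\ref{O: twocongr}; Fact~\ref{F: gamma} then makes each single letter an isoterm for $\vv$, so by Lemma~\ref{twoletters} the subword ${\bf v}$ of ${\bf u}_1$ is a $\gamma$-term, and by Lemma~\ref{L: tau} it is a $\tau_1$-term. Thus, whenever $\vv\models{\bf v}\approx{\bf w}$, we have ${\bf w}\gamma{\bf v}$; it remains to upgrade this to ${\bf w}\lambda{\bf v}$.

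Suppose, for contradiction, that ${\bf w}\not\lambda{\bf v}$. Since $\lambda=\tau_1\wedge\lambda_1$ and ${\bf w}\gamma{\bf v}$, some multiple letter $x$ of ${\bf v}$ has its first two occurrences adjacent in exactly one of ${\bf v},{\bf w}$; as a one-island letter would have these occurrences adjacent in both words and ${\bf v}$ is $2$-island-limited, $x$ forms exactly two islands in ${\bf v}$ (and in ${\bf w}$), and the discrepancy is precisely that one of ${\bf v},{\bf w}$ has first $x$-island of length $\ge2$ while the other has first $x$-island of length $1$. If $x\notin\con({\bf c})$, then the first $x$-islands of ${\bf u}_1={\bf c}{\bf v}{\bf d}$ and of ${\bf c}{\bf w}{\bf d}$ are those of ${\bf v}$ and of ${\bf w}$ respectively, so ${\bf u}_1\not\lambda{\bf c}{\bf w}{\bf d}$; as $\vv\models{\bf u}_1\approx{\bf c}{\bf w}{\bf d}$, this contradicts ${\bf u}_1$ being a $\lambda$-term. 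So we may assume every letter witnessing ${\bf w}\not\lambda{\bf v}$ lies in $\con({\bf c})$; since ${\bf u}_1={\bf c}{\bf v}{\bf d}$ is $2$-island-limited and each such witness already forms two islands inside ${\bf v}$, each witness must be the first letter of ${\bf v}$, hence there is a unique witness $x$, the word ${\bf v}$ begins with $x$, and ${\bf c}={\bf c}_0x^i$ with $x\notin\con({\bf c}_0)$ and $i\ge1$; in particular the first $x$-island of every word of $\mathtt u$ has length $\ge2$. Let ${\bf v}^{\bullet}$ be the one of ${\bf v},{\bf w}$ whose first $x$-island has length $\ge2$ and ${\bf v}^{\circ}$ the other. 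A routine comparison of island structures — using that ${\bf v}$ and ${\bf w}$ differ only at $x$, that both begin with $x$, and that $x\notin\con({\bf c}_0)$ — shows ${\bf c}_0{\bf v}^{\bullet}{\bf d}\in\mathtt u$, whereas ${\bf c}_0{\bf v}^{\circ}{\bf d}$ has first $x$-island of length $1$, so that ${\bf c}_0{\bf v}^{\circ}{\bf d}\not\lambda{\bf c}_0{\bf v}^{\bullet}{\bf d}$. Since $\vv\models{\bf c}_0{\bf v}^{\bullet}{\bf d}\approx{\bf c}_0{\bf v}^{\circ}{\bf d}$ and ${\bf c}_0{\bf v}^{\bullet}{\bf d}$, lying in $\mathtt u$, is a $\lambda$-term, this is again a contradiction. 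Therefore ${\bf w}\lambda{\bf v}$, so every word of $\mathtt v$ is a $\lambda$-term, i.e. $\mathtt v$ is a $\lambda$-term for $\vv$.

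I expect the case $x\in\con({\bf c})$ to be the crux. There the straightforward argument ``$\vv\models{\bf u}_1\approx{\bf c}{\bf w}{\bf d}$'' fails, because the prefix $x^i$ lengthens the first $x$-island on both sides and erases the discrepancy; the remedy — keeping only ${\bf c}_0$ together with whichever of ${\bf v},{\bf w}$ carries the ``adjacent'' first island, and checking this word still lies in $\mathtt u$ — is exactly where the $2$-island-limited hypothesis is essential, since it forces ${\bf v}$ to begin with $x$ and forces ${\bf c}$ to contribute a single trailing $x$-block. The supporting bookkeeping (that prepending $x^i$, and replacing ${\bf v}$ by the $\gamma$-equivalent ${\bf w}$, changes no other letter's $\lambda_1$-status) is routine but should be written out carefully.
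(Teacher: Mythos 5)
Your argument is correct and is essentially the paper's own proof: after first securing that $\bf v$ is a $\gamma$-term (via Observation~\ref{O: twocongr}, Lemma~\ref{twoletters} and Fact~\ref{F: gamma}), the paper also isolates the unique discrepancy letter $x$, handles the case where the context contributes no $x$ by pushing the identity up to $\mathtt u$ directly, and in the remaining case strips the trailing $x$-block from the prefix and observes that whichever of ${\bf v},{\bf w}$ has the length-$\ge 2$ first $x$-island lands back in $\mathtt u$ (its Cases 2 and 3 correspond to your sub-cases). The only difference is bookkeeping: the paper organizes the argument by the position and length of the first $x$-island of $\bf v$ rather than by whether $x\in\con(\mathbf c)$, and it leaves the same ``no other letter's $\lambda_1$-status changes'' verification implicit, as you do.
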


\begin{proof} If $\mathtt u$ is 1-island-limited then   $\mathtt v$ is also 1-island-limited and both $\mathtt u$ and $\mathtt v$ 
are $\gamma$-classes of $\mathfrak A^*$.  Consequently, $\mathtt v$ is
a $\lambda$-term for $\vv$ by Corollary~\ref{C: taum}.
So, we may assume that some letter forms two islands in some (every) word in $\mathtt u$. Hence every word in $\mathtt u$ involves at least two distinct letters.  

Take some  ${\bf v} \in {\mathtt v}$. Then  ${\mathbf v}$  is a subword of some ${\bf u} \in {\mathtt u}$ by Lemma \ref{subword}.  
Since  $\bf u$ is  a $\gamma$-term for $\vv$  by Observation~\ref{O: twocongr}, the word 
$\bf v$ is  a $\gamma$-term for $\vv$ by Lemma~\ref{twoletters} and Fact~\ref{F: gamma}.

If   ${\bf v}$ is  not a $\lambda$-term for $\vv$, then
${\vv} \models {\bf v} \approx {\bf w}$ such that ${\bf v}\gamma{\bf w}$
but ${\bf v}$ and  ${\bf w}$ are not $\lambda$-related.  Three cases are possible.

{\bf Case 1}:  For some $x \in  \con({\bf v})= \con({\bf w})$ and  $k_1, k_2  \ge 1$,
${\bf v} = {\bf a}x^{k_1}{\bf b}x^{k_2}{\bf c}$,  ${\bf u} = {\bf p}{\bf a}x^{k_1}{\bf b}x^{k_2}{\bf c}{\bf s}$ ,
where  ${\bf p}{\bf a}{\bf b}{\bf c}{\bf s}$ has no occurrences of $x$,  the words ${\bf p}$ and ${\bf c}{\bf s}$ are possibly empty  but $\bf a$ and ${\bf b}$ are not empty, and  one of the following holds:

(i) $k_1 =1$ but the first  island formed by $x$ in ${\bf w}$  contains at least two occurrences of $x$; 

(ii) $k_1 >1$ but the first  island formed by $x$ in ${\bf w}$  contains exactly one occurrences of $x$. 

 Then the words  ${\bf u}$ and  ${\bf p}{\bf w}{\bf s}$ also not $\lambda$-related.
Since $\vv \models {\bf u} =   {\bf p v s} \approx {\bf p}{\bf w}{\bf s}$, this contradicts the fact that $\bf u$ is  a $\lambda$-term for  $\vv$.

{\bf Case 2}: For some letter $x$ and   $q \ge n \ge 1$,  $k, r \ge 1$,    $m \ge 2$,

 ${\bf v} = x{\bf b}x^{n}{\bf c}$,   ${\bf u} = {\bf p}x^k{\bf b}x^q{\bf c}{\bf s}$, ${\bf w} = x^m{\bf b}'x^{r}{\bf c}'$ , where  
 ${\bf p}{\bf b}{\bf c}{\bf s}{\bf b}'{\bf c}'$ has no occurrences of $x$,  the words ${\bf p}$, ${\bf c}{\bf s}$ and ${\bf c}'$ are possible empty but $\bf b$ and ${\bf b}'$ are not empty.

If $\bf v$ and $x {\bf b}'x^{r}{\bf c}'$ are not $\lambda$-related, then  for some  $y \in  \con({\bf v})= \con({\bf w})$ and  $k_1, k_2  \ge 1$,
${\bf v} = {\bf a}_1y^{k_1}{\bf b}_1y^{k_2}{\bf c}_1$ and  ${\bf u} = {\bf p}{\bf a}_1y^{k_1}{\bf b}_1y^{k_2}{\bf c}_1{\bf s}$ so that
if $k_1 =1$ then the first  island formed by $y$ in ${\bf w}$  contains at least two occurrences of $y$, and  if  $k_1 >1$ then  this island
 contains exactly one occurrences of $y$. Hence one can give the same argument as in Case 1 using $y$ instead of $x$.
 So, we may  assume that ${\bf v} \lambda (x {\bf b}'x^{r}{\bf c}')$.

Notice that the identity ${\bf v} \approx {\bf w}$ implies ${\bf u}_1 = {\bf p}x{\bf b}x^n{\bf c}{\bf s} \approx {\bf p} x^m{\bf b}'x^r{\bf c}'{\bf s} = {\bf u}_2$. 

If $k=1$ then ${\bf u} \lambda {\bf u}_1$. Hence ${\bf u}_1 \in \mathtt u$.
Since  $\vv \models {\bf u}_1 \approx {\bf u}_2$ but  ${\bf u}_1$ and  ${\bf u}_2$  are not $\lambda$-related, this contradicts our assumption that every word in $\mathtt  u$ is  a $\lambda$-term for  $\vv$.

If $k \ge 2$ then ${\bf u}  \lambda  {\bf u}_2$  because ${\bf v} \lambda (x {\bf b}'x^{r}{\bf c}')$.  Hence ${\bf u}_2 \in \mathtt u$.
Since  $\vv \models {\bf u}_1 \approx {\bf u}_2$ but ${\bf u}_1$ and  ${\bf u}_2$  are not $\lambda$-related, this contradicts our assumption  that every word in $\mathtt u$ is  a $\lambda$-term for  $\vv$.

{\bf Case 3}:  For some letter $x$ and    $k \ge m \ge 2$,   $n, q, r \ge 1$,

${\bf v} = x^m{\bf b}x^n{\bf c}$,   ${\bf u} = {\bf p}x^k{\bf b}x^q{\bf c}{\bf s}$, ${\bf w} = x{\bf b}'x^r{\bf c}'$, where 
 ${\bf p}{\bf b}{\bf c}{\bf s}{\bf b}'{\bf c}'$ has no occurrences of $x$,   the words ${\bf p}$, ${\bf c}{\bf s}$ and ${\bf c}'$ are possible empty but $\bf b$ and ${\bf b}'$ are not empty.

In this case, the identity ${\bf v} \approx {\bf w}$ implies ${\bf u}_1 =  {\bf p}x^m{\bf b}x^n{\bf c}{\bf s} \approx {\bf p} x{\bf b}'x^r{\bf c}'{\bf s}  = {\bf u}_2$.

Since  ${\bf u} \lambda {\bf u}_1$, the word ${\bf u}_1$ belongs to the $\lambda$-class $\mathtt u$. Since $\vv \models {\bf u}_1 \approx {\bf u}_2$ but
${\bf u}_1$ and  ${\bf u}_2$  are not $\lambda$-related, 
this contradicts our assumption that $\mathtt u$ is  a $\lambda$-term for  $\vv$.

To avoid  a contradiction in every case,  we conclude that $\bf v$ is a $\lambda$-term for $\vv$. Since $\bf v$ is an arbitrary word in $\mathtt v$, the $\lambda$-class $\mathtt v$ is a $\lambda$-term for $\vv$.
\end{proof}

\begin{cor} \label{C: MW2}  Let  $\mathtt{W} \subset \mathfrak A^*/\lambda$ be a set of 2-island-limited $\lambda$-words. Then a monoid variety $\vv$ contains  $M_\lambda(\mathtt {W})$ if and only if every $\lambda$-word in $\mathtt W$ is a $\lambda$-term for $\vv$.
\end{cor}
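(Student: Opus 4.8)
The plan is to mirror the structure of the proof of Corollary~\ref{C: MW1}, replacing the role of Corollary~\ref{C: taum} by Lemma~\ref{L: twoislands}. First I would handle the forward direction: if $\vv$ contains $M_\lambda(\mathtt W)$, then by Proposition~\ref{P: MW} every $\lambda$-word in $\mathtt W^{\le_\lambda}$ is a $\lambda$-term for $\vv$, and in particular every $\lambda$-word in $\mathtt W \subseteq \mathtt W^{\le_\lambda}$ is a $\lambda$-term for $\vv$. This direction is immediate and requires no hypothesis on the number of islands.

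For the converse, suppose every $\lambda$-word in $\mathtt W$ is a $\lambda$-term for $\vv$. To invoke Proposition~\ref{P: MW} and conclude that $\vv$ contains $M_\lambda(\mathtt W)$, I need every $\lambda$-word in the closure $\mathtt W^{\le_\lambda}$ to be a $\lambda$-term for $\vv$. So let $\mathtt v \in \mathtt W^{\le_\lambda}$; then $\mathtt v \le_\lambda \mathtt u$ for some $\mathtt u \in \mathtt W$. Here is where the 2-island-limited hypothesis is used: each $\mathtt u \in \mathtt W$ is 2-island-limited by assumption, and if $\mathtt v \le_\lambda \mathtt u$ then every word in $\mathtt v$ is a subword of a word in $\mathtt u$ (Lemma~\ref{subword}), so $\mathtt v$ is also 2-island-limited. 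Now Lemma~\ref{L: twoislands} applies directly: since $\mathtt u$ is a 2-island-limited $\lambda$-term for $\vv$ and $\mathtt v \le_\lambda \mathtt u$, the $\lambda$-word $\mathtt v$ is a $\lambda$-term for $\vv$. Hence every $\lambda$-word in $\mathtt W^{\le_\lambda}$ is a $\lambda$-term for $\vv$, and Proposition~\ref{P: MW} gives that $\vv$ contains $M_\lambda(\mathtt W) = M_\lambda(\mathtt W^{\le_\lambda})$.

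I expect the proof to be short and essentially a bookkeeping exercise, since all the real work is already packaged in Lemma~\ref{L: twoislands}. The only subtlety worth being careful about is verifying that being 2-island-limited is inherited under $\le_\lambda$ — that is, that a subword of a 2-island-limited word is again 2-island-limited, which is clear since taking a subword can only decrease the number of islands a letter forms. One should also make sure the hypothesis of Proposition~\ref{P: MW} that the empty word forms a singleton $\lambda$-class is satisfied; this holds because $\lambda$ partitions $\tau_1$ and $\{1\}$ is already a singleton $\tau_1$-class (indeed, as noted in the introduction, each letter and the empty word form singleton classes for $\tau \in \{\gamma,\lambda,\rho\}$). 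With these observations in place the argument is complete.
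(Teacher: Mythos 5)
Your proposal is correct and follows exactly the route the paper takes: the paper's proof of Corollary~\ref{C: MW2} is precisely ``repeat the proof of Corollary~\ref{C: MW1}, using Lemma~\ref{L: twoislands} in place of Corollary~\ref{C: taum},'' which is what you do. Your extra checks (that the $2$-island-limited property passes to $\le_\lambda$-predecessors, and that the empty word is a singleton $\lambda$-class) are harmless and correct, though only the hypothesis that each $\mathtt u \in \mathtt W$ is $2$-island-limited is actually needed to invoke Lemma~\ref{L: twoislands}.
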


\begin{proof} This is a consequence of  Proposition~\ref{P: MW} and Lemma~\ref{L: twoislands}. 
It can be justified by repeating the proof of Corollary~\ref{C: MW1} and using  Lemma~\ref{L: twoislands} instead of 
Corollary~\ref{C: taum}.
\end{proof}

\section{Every subvariety of $\mathbb J$ is generated by a monoid of the form $M_{\lambda}({\mathtt W})$} \label{sec:J}

The variety given by a set of identities  $\Sigma$  is denoted by $\var \Sigma$.
 It follows from \cite[Proposition 3.1]{1977} that
\[{\mathbb E} = \var\{ x^2 \approx x^3, xyx \approx x^2y, x^2 y^2 \approx y^2 x^2\}.\]
Following \cite{SG} put:
\[{\mathbb F} = \var\{ x^2 y^2 \approx y^2 x^2, xytxy \approx yxtxy, xtxs \approx xtxsx\};\]
\[{\mathbb H} = \var\{xtx \approx xtx^2,  x^2 y^2 \approx y^2 x^2, xytxy \approx yxtxy, \]
\[xt_1xt_2t_3x \approx xt_1xt_2xt_3x,  x^2yty \approx xyxty \approx yx^2ty \};\]
\[{\mathbb I} = \var\{xtx \approx xtx^2,  x^2 y^2 \approx y^2 x^2, xytxy \approx yxtxy, \]
\[xt_1xt_2t_3x \approx xt_1xt_2xt_3x,  xtxysy \approx xtyxsy \};\]
\[{\mathbb J} = \var\{xtx \approx xtx^2,  x^2 y^2 \approx y^2 x^2, xytxy \approx yxtxy, \]
\[xt_1xt_2t_3x \approx xt_1xt_2xt_3x,  {\bf w}_n(\pi) \approx {\bf w}_n'(\pi) \mid n\ge 0, \pi \in S_n  \},\]
where $ {\bf w}_n(\pi) = x y_{1\pi} y_{2\pi} \dots y_{n\pi} x t_1 y_1 t_2 y_2 \dots t_n y_n$,

${\bf w}'_n(\pi) = x^2 y_{1\pi} y_{2\pi} \dots y_{n\pi}  t_1 y_1 t_2 y_2 \dots t_n y_n$ and
 $S_n$ is the full symmetric group on the $n$-element set.

 Let $F^1$  denote  the monoid  obtained  by adjoining an identity element to the following  semigroup:
\[F = \langle  b, c \mid  b^2 = b^3, c^2=0, cb=c, b^2c=0 \rangle.\]
Notice that the multiplication table of the six-element monoid $F^1 = \{1, b, b^2, c,  bc, 0\}$ is the transpose of the multiplication table which defines the monoid $B$  in \cite[Section 9]{LeeLi}.
According to Proposition 9.1 in
 \cite{LeeLi},  $\{ xtysxy \approx xtysyx, xtxysy \approx xtyxsy, xytxsy \approx yxtxsy, txsx \approx xtxsx\}$ is the identity basis 
for $B$. It is easy to check that $\var B = \overline{\mathbb F}.$
Therefore, $\mathbb F = \var F^1$.

 The following statement is a reformulation of Lemma 3.6 in \cite{SG}.

\begin{fact}\label{ata}

 (i) The $\lambda$-word $\mathtt{atba^+ sb^+}$ is a $\lambda$-term for the variety ${\mathbb J}$.

 (ii) The $\lambda$-word $\mathtt{ba^{+} sb^{+}}$ is a $\lambda$-term for the variety ${\mathbb I}$.

 (iii)  The $\lambda$-word $\mathtt{abta^{+} sb^+}$ is a $\lambda$-term for the variety ${\mathbb H}$.

\end{fact}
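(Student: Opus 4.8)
All three clauses assert that a particular $2$-island-limited $\lambda$-class is a $\lambda$-term for one of the varieties $\mathbb H$, $\mathbb I$, $\mathbb J$ defined above, and each is a translation of the corresponding clause of \cite[Lemma~3.6]{SG}. My plan has three steps: reduce to a single representative word per class; record the cheap structural constraints on any $\vv$-equivalent word; and then invoke Gusev's analysis to finish.

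\emph{Step 1: reduction to one representative.} Since $\mathbb H$, $\mathbb I$ and $\mathbb J$ all satisfy $xtx\approx xtx^2$, they also satisfy $x^k\approx x^2$ for every $k\ge 2$ (substitute $t\mapsto 1$, as we work with monoids) and $xtx^k\approx xtx$ for every $k\ge 1$ (iterate, substituting $t\mapsto tx$). Applying $xtx^k\approx xtx$ to the non-initial island of a two-island multiple letter and $x^k\approx x^2$ to a one-island multiple letter, one checks that inside each of these varieties every word of $\mathtt{atba^{+}sb^{+}}$ equals $atbasb$, every word of $\mathtt{ba^{+}sb^{+}}$ equals $ba^{2}sb$, and every word of $\mathtt{abta^{+}sb^{+}}$ equals $abtasb$. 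Hence each of these $\lambda$-classes collapses to a single element of the corresponding quotient, and it is enough to prove for each clause that the listed representative is a $\lambda$-term: whenever $\vv\models {\bf u}\approx{\bf v}$ with ${\bf u}$ that representative, ${\bf v}$ must lie in the named $\lambda$-class.

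\emph{Step 2: the easy constraints.} Fix such an identity $\vv\models{\bf u}\approx{\bf v}$. Each of $\mathbb H$, $\mathbb I$, $\mathbb J$ contains the three-element monoid $M(a)=\{1,a,0\}$ (all their defining identities hold in it), so by Observation~\ref{O: gammak} we get ${\bf u}\,\gamma_1\,{\bf v}$; in particular ${\bf v}$ has the same content as ${\bf u}$, the same simple and multiple letters, and ${\bf v}\neq 1$. What is left is to pin down the island skeleton of ${\bf v}$ and, for each of the multiple letters $a$ and $b$, to show that its first island in ${\bf v}$ again has length one — equivalently, that ${\bf v}$ is $\tau_1\wedge\lambda_1=\lambda$-related to ${\bf u}$, which by Step~1 is precisely what we want.

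\emph{Step 3, and the main obstacle.} This last point is the substance of \cite[Lemma~3.6]{SG}: Gusev describes all words forming a nontrivial identity with $atbasb$ in $\mathbb J$ (respectively with $ba^{2}sb$ in $\mathbb I$, with $abtasb$ in $\mathbb H$), and one verifies, using the description of $\circ_\lambda$ and of canonical names of $\lambda$-classes from Proposition~\ref{congruence}, that the resulting set of words is exactly the $\lambda$-class in the statement. The hard part is thus Gusev's combinatorial analysis, namely that the identities $xytxy\approx yxtxy$, $xt_1xt_2t_3x\approx xt_1xt_2xt_3x$, $xtxysy\approx xtyxsy$ and ${\bf w}_n(\pi)\approx{\bf w}_n'(\pi)$ cannot, in any combination, bring the second occurrence of $a$ in $atbasb$ adjacent to the first (and symmetrically for $b$), nor alter the island skeleton — the obstruction being the unique simple letter $t$ sitting strictly between the two occurrences of $a$, and $s$ between those of $b$. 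A self-contained proof would re-derive this by a minimal-counterexample deduction directly from the defining identities, rather than routing through the generating monoids $M_\lambda(\mathtt W)$ and Corollary~\ref{C: MW2}, so as not to presuppose Theorem~\ref{main}.
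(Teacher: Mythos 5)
Your proposal is correct and ultimately rests on the same foundation as the paper: the paper gives no independent proof of this fact but simply states that it is a reformulation of Lemma~3.6 in \cite{SG}, which is exactly the external result your Step~3 invokes. Your Steps~1 and~2 (collapsing each $\lambda$-class to a single representative via $xtx\approx xtx^2$, and extracting the $\gamma_1$-constraint from $M(a)\in\vv$) are a sound and somewhat more explicit account of why the reformulation is legitimate, but the combinatorial core is deferred to Gusev in both treatments.
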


 Figure~\ref{pic: J} below duplicates  Figure~1 in \cite{SG} which exhibits the subvariety lattice of  $\mathbb J$.
This lattice contains 
\[ \mathbb B_0  \stackrel{\cite{ELB0}}{=}{\mathbb E} \vee \overline{\mathbb E}
\stackrel{Theorem~\ref{latticeA}(iii)}{=}   \mathbb M_{\gamma}(\mathtt{ta^+, a^+t})  \stackrel{Fact~\ref{F: isom}}{=} \mathbb M_{\lambda}(\mathtt{ta^+, a^+t}).   \]
Comparing  Figure~\ref{pic: A} with Figure~\ref{pic: J} tells us that  $\mathbb B_0$
 is a common subvariety of $\mathbb J$ and $\mathbb A \vee \overline{\mathbb A}$.  
In view of Theorem~\ref{latticeA} and Fact~\ref{F: isom}, every subvariety of $\mathbb B_0$ is generated by a monoid of the form $M_\lambda(W)$.
 The following theorem describes the generating monoid for each of the five subvarieties of $\mathbb J$ which is not a subvariety of  $\mathbb B_0$.

\begin{theorem}\label{main}

(i)  ${\mathbb F} = \mathbb M_{\lambda}(\mathtt{ata^+})$.

(ii) ${\mathbb F} \vee \overline{\mathbb E} = \mathbb M_{\lambda}(\mathtt{ata^+, a^+t}) = \mathbb M_{\lambda}(\mathtt{a^+ta^+})$.

(iii) $\mathbb H = \mathbb M_{\lambda}(\mathtt{abta^{+}sb^{+}})$.

(iv) The variety $\mathbb I$ is generated by the 19-element monoid $M_{\lambda}(\mathtt{ba^{+}sb^{+}})$.

(v) The variety $\mathbb J$ is generated by the 31-element monoid $M_{\lambda}(\mathtt{atba^{+}sb^{+}})$.

\end{theorem}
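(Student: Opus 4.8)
The plan is to establish each of the five equalities (i)--(v) by the same two-step strategy used throughout Sections 4--5: first show the named variety contains the corresponding monoid $M_\lambda(\mathtt W)$, and then show it cannot be a proper extension, using the known structure of the subvariety lattice of $\mathbb J$ from \cite{SG}. For the containment direction, by Corollary~\ref{C: MW2} (all the $\lambda$-words in question are $2$-island-limited, since in each the only repeated letters are $a$ and $b$, each forming at most two islands) it suffices to check that the generating $\lambda$-word is a $\lambda$-term for the variety. For (iii), (iv), (v) this is exactly Fact~\ref{ata}(iii),(ii),(i) respectively, so those containments are immediate. For (i) I would verify directly that $\mathtt{ata^+}=\{ata^n\mid n\ge1\}$ is a $\lambda$-term for $F^1=\var F$: using $F\models b^2=b^3$ and the fact that $ab$ is an isoterm for $F^1$, any identity $ata^n\approx{\bf w}$ of $F^1$ must have ${\bf w}$ of one of the shapes $a^pta^q$, $a^ptax^q$-type rearrangements, etc., and one substitutes $x\mapsto b$, $t\mapsto c$ to force a contradiction unless ${\bf w}\,\lambda\,ata^n$; this uses that $atab$-type patterns collapse to $0$ in $F$ while $ata$ does not. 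For (ii), combine (i) with Theorem~\ref{latticeA}(ii) via Corollary~\ref{product} to get $\mathbb F\vee\overline{\mathbb E}=\mathbb M_\lambda(\mathtt{ata^+,a^+t})$, then argue $\mathbb M_\lambda(\mathtt{ata^+,a^+t})=\mathbb M_\lambda(\mathtt{a^+ta^+})$ by the same $\le_\lambda$-closure argument as in the proof of Theorem~\ref{latticeA}(iii): $\mathtt{ata^+}\le_\lambda\mathtt{a^+ta^+}$ and $\mathtt{a^+t}\le_\lambda\mathtt{a^+ta^+}$ give one inclusion via Lemma~\ref{L: twoislands}, while for the other one checks $\mathtt{a^+ta^+}$ is a $\lambda$-term for $\mathbb M_\lambda(\mathtt{ata^+,a^+t})$ by ruling out the finitely many possible $\gamma$-shapes of a putative non-$\lambda$-related side.

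For the non-extension direction, I would use that $\mathbb F$, $\mathbb F\vee\overline{\mathbb E}$, $\mathbb H$, $\mathbb I$, $\mathbb J$ each have a \emph{unique} maximal proper subvariety inside the lattice of Figure~\ref{pic: J} (this is read off from \cite{SG}), so it is enough to exhibit, for each, one identity satisfied by that maximal subvariety which kills the relevant $M_\lambda(\mathtt W)$. Concretely: $\mathbb E$ (the unique maximal subvariety of $\mathbb F$) satisfies $xyx\approx x^2y$, whence $ata\approx a^2t$ and $\mathtt{ata^+}$ is not a $\lambda$-term for $\mathbb E$, so $\mathbb E\not\supseteq M_\lambda(\mathtt{ata^+})$ by Corollary~\ref{C: MW2}. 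Similarly one identifies the maximal subvariety below $\mathbb F\vee\overline{\mathbb E}$, below $\mathbb H$, below $\mathbb I$, and below $\mathbb J$, and in each case produces from its defining identities (for $\mathbb J$'s predecessor, one of the $\mathbf w_n(\pi)\approx\mathbf w'_n(\pi)$-type relations or the $xtxysy\approx xtyxsy$ relation) an identity collapsing the generating $\lambda$-word; then Corollary~\ref{C: MW2} gives the strict non-containment, and since the maximal subvariety is unique, $M_\lambda(\mathtt W)$ must generate the variety itself.

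For the element counts in (iv) and (v), I would invoke the canonical-form description of $\mathfrak A^*/\lambda$ promised in Proposition~\ref{congruence} together with the explicit algorithm for the multiplication table of $M_\lambda(\mathtt W)$: one simply enumerates the $\le_\lambda$-downset of the single generating $\lambda$-word and adjoins $0$. For $\mathtt{ba^+sb^+}$ one lists all canonical $\lambda$-subwords of $ba^2sb^2$ (in the alphabet $\{\mathtt a,\mathtt a^+,\mathtt b,\mathtt b^+,\mathtt s\}$ with the adjacency constraints recorded by $\lambda$), obtaining $18$ nonzero elements plus $0$; for $\mathtt{atba^+sb^+}$ the analogous enumeration yields $30$ nonzero elements plus $0$. (The number $31$ also matches Example~\ref{E: J} quoted in the introduction for the equationally equivalent monoid $M_\lambda(\mathtt{a\circ_\lambda t\circ_\lambda b\circ_\lambda a^{2+}\circ_\lambda s\circ_\lambda b^{2+}})$.)

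The main obstacle I expect is not the containment direction (which is handed to us by Fact~\ref{ata} and Corollary~\ref{product}) but pinning down, for each of the five varieties, exactly which variety is its unique maximal proper subvariety in Figure~\ref{pic: J} and then extracting the right collapsing identity --- in particular for $\mathbb J$, where the predecessor is $\mathbb I$ and one must check that some instance of $xtxysy\approx xtyxsy$ (equivalently, the relation making $\mathtt{ba^+sb^+}$ a $\lambda$-term for $\mathbb I$ per Fact~\ref{ata}(ii)) genuinely fails to keep $\mathtt{atba^+sb^+}$ a $\lambda$-term, i.e. that passing from $\mathbb J$ to $\mathbb I$ forces an identity of the form $atba^n sb^m\approx{\bf w}$ with ${\bf w}\not\lambda\, atba^nsb^m$. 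This is the one spot where I would have to look carefully at the defining identities of $\mathbb I$ versus $\mathbb J$ rather than cite a black box. The bookkeeping for the $19$- and $31$-element counts is routine but tedious and I would relegate it to a direct appeal to the algorithm of Proposition~\ref{congruence}.
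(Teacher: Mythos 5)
Your proposal follows essentially the same route as the paper's proof: containment via Fact~\ref{ata} (and a direct substitution check in $F^1$ for part (i)) combined with Corollary~\ref{C: MW2}, the identity $\mathbb M_{\lambda}(\mathtt{ata^+, a^+t}) = \mathbb M_{\lambda}(\mathtt{a^+ta^+})$ by the same $\le_\lambda$-closure argument as in Theorem~\ref{latticeA}(iii), and the reverse direction by exhibiting a collapsing identity of the unique maximal proper subvariety (the paper uses $xyx\approx x^2y$ for $\mathbb E$, $xytxsy\approx yxtxsy$ for $\mathbb F\vee\overline{\mathbb E}$, $yx^2sy\approx x^2ysy$ for $\mathbb H$, and $xtxysy\approx xtyxsy$ for $\mathbb I$, exactly as you anticipate). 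The element counts are likewise obtained by the enumeration of Lemma~\ref{tausubwords}, so the proposal is correct and matches the paper.
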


\begin{proof} (i)  The variety $\mathbb F$ is a cover of $\mathbb E$ by Figure~\ref{pic: J}.
Since \[{\mathbb E} \stackrel{Theorem~\ref{latticeA}(i)}{=} \mathbb M_{\gamma}(\mathtt{ta^+}) \stackrel{ Fact~\ref{F: isom}}{=} \mathbb M_{\lambda}(\mathtt{ta^+}),\]
 the $\lambda$-word  $(\mathtt{t \circ_\lambda a^+})$ is  a $\lambda$-term for ${\mathbb F}$ by Corollary~\ref{C: MW2}.

Suppose that for some $n\ge1$, the monoid $F^1$ satisfies an identity $xtx^n \approx {\bf u}$ such that $xtx^n$ and $\bf u$ are not $\lambda$-related.  Since  $(\mathtt{t \circ_\lambda a^+})$ is a $\lambda$-term for $\mathbb F$, there are only two possibilities for $\bf u$: 
either ${\bf u} = x^mt$ for some $m>1$ or ${\bf u} = x^m tx^n$ for some  $m >1$, $n\ge1$.  
But neither  $xtx^n \approx x^mt$ nor  $xtx^n \approx x^mtx^n$ holds in $F^1$, 
because substituting $b$ for $x$ and $c$ for $t$ turns both identities into $bc = 0$.
Therefore,  $(\mathtt{a \circ_\lambda  t \circ_\lambda a^+})$ is a $\lambda$-term for $\mathbb F$.
Consequently,  ${\mathbb F}$ contains $M_{\lambda}(\mathtt{ata^+})$ by Corollary~\ref{C: MW2}.

Since ${\mathbb E} \models xyx \approx x^2y$,
the $\lambda$-word $\mathtt{ata^+}$ is not a $\lambda$-term for $\mathbb E$. Corollary~\ref{C: MW2}
 implies that $\mathbb E$ does not contain $M_{\lambda}(\mathtt{ata^+})$.
Since $\mathbb E$ is a unique maximal subvariety of $\mathbb F$, the variety ${\mathbb F}$ is generated by the monoid $M_{\lambda}(\mathtt{ata^+})$.

(ii) Since ${\overline{\mathbb E}} \stackrel{Theorem~\ref{latticeA}(ii)}{=} \mathbb M_{\gamma}(\mathtt{a^+t}) \stackrel{ Fact~\ref{F: isom}}{=} \mathbb M_{\lambda}(\mathtt{a^+t})$, in view of Part (i) and  Corollary \ref{product} we have
 ${\mathbb F} \vee \overline{\mathbb E} = \mathbb M_{\lambda}(\mathtt{ata^+, a^+t})$.

Let us verify that  $\mathbb M_{\lambda}(\mathtt{ata^+, a^+t}) = \mathbb M_{\lambda}(\mathtt{a^+ta^+}).$

Indeed, since $(\mathtt{a  \circ_\lambda t  \circ_\lambda a^+}) \le_\lambda (\mathtt{a^+  \circ_\lambda t  \circ_\lambda a^+})$ and 
$(\mathtt{a^+  \circ_\lambda t}) \le_\lambda (\mathtt{a^+  \circ_\lambda t  \circ_\lambda a^+})$,  both 
 $(\mathtt{a  \circ_\lambda t  \circ_\lambda a^+})$ and  $(\mathtt{a^+  \circ_\lambda t})$ are $\lambda$-terms for 
$ M_{\lambda}(\mathtt{a^+ta^+})$  by Lemma~\ref{L: twoislands}. Hence \\
$\mathbb M_{\lambda}(\mathtt{a t a^+, a^+ t}) \subseteq \mathbb M_{\lambda}(\mathtt{a^+ta^+})$ by Corollary~\ref{C: MW2}.

 Conversely, since $(\mathtt{t}) \le_\lambda (\mathtt{t  \circ_\lambda a^+}) \le_{\lambda} (\mathtt{a  \circ_\lambda t  \circ_\lambda a^+})$, 
 the word $t$ is an isoterm and
$(\mathtt{t  \circ_\lambda a^+})$ is  a $\lambda$-term for  $M_{\lambda}(\mathtt{a t a^+, a^+ t})$
 by Lemma~\ref{L: twoislands}.

If $\vv \models x^ntx^m \approx {\bf u}$ for some $n \ge 2, m\ge 1$ then ${\bf u}= x^ktx^p$ for some $k+p\ge 2$ by Observation~\ref{O: gammak}.
Since $\mathtt{ata^+, ta^+}$ and $\mathtt{a^+t}$ are $\lambda$-terms for $\vv$, the only possibility is that $k\ge 2$ and $p\ge 1$.
Hence $(\mathtt{a^+\circ_\lambda t \circ_\lambda a^+})$ is  a $\lambda$-term for $\vv$.

Therefore, $\mathbb M_{\lambda}(\mathtt{a t a^+, a^+ t}) \supseteq \mathbb M_{\lambda}(\mathtt{a^+ta^+})$ by Corollary~\ref{C: MW2}.

(iii)  The $\lambda$-word $\mathtt{abta^{+}sb^{+}}$ is a $\lambda$-term for $\mathbb H$ by Fact \ref{ata}. 
Therefore, the variety ${\mathbb H}$ contains $M_{\lambda}(\mathtt{abta^{+}sb^{+}})$ by Corollary~\ref{C: MW2}.

The variety ${\mathbb F} \vee \overline{\mathbb E}$ satisfies $xytxsy  \approx yxtxsy$ by  Lemma 2.8 in \cite{SG}.
Thus the $\lambda$-word $\mathtt{abta^{+}sb^{+}}$ is not a $\lambda$-term for ${\mathbb F} \vee \overline{\mathbb E}$ and Corollary~\ref{C: MW2}  implies that the variety ${\mathbb F} \vee \overline{\mathbb E}$ does not contain $M_{\lambda}(\mathtt{abta^{+}sb^{+}})$.
Since ${\mathbb F} \vee \overline{\mathbb E}$ is a unique maximal subvariety of $\mathbb H$, the variety ${\mathbb H}$ is generated by  $M_{\lambda}(\mathtt{abta^{+}sb^{+}})$.

(iv) The $\lambda$-word $\mathtt{ba^{+}sb^{+}}$ is a $\lambda$-term for $\mathbb I$ by Fact \ref{ata}.
 Therefore, the variety ${\mathbb I}$ contains $M_{\lambda}(\mathtt{ ba^{+}sb^{+} })$ by Corollary~\ref{C: MW2}.

By the definition, the variety $\mathbb H$ satisfies $yx^2sy \approx  x^2ysy$.
Thus the $\lambda$-word $\mathtt{ba^{+}sb^{+}}$ is not a $\lambda$-term for ${\mathbb H}$, and Corollary~\ref{C: MW2}  implies that the variety ${\mathbb H}$ does not contain $M_{\lambda}(\mathtt{ ba^{+}sb^{+}})$.
Since ${\mathbb H}$ is a unique maximal subvariety of $\mathbb I$, the variety $\mathbb I$ is generated by $M_{\lambda}(\mathtt{ba^{+}sb^{+}})$.

(v) The $\lambda$-word  $\mathtt{atba^{+}sb^{+}}$ is a $\lambda$-term for $\mathbb J$ by
Fact \ref{ata}.
Therefore, the variety ${\mathbb J}$ contains $M_{\lambda}(\mathtt{atba^{+}sb^{+}})$ by Corollary~\ref{C: MW2}.

 By the definition, the variety $\mathbb I$
satisfies $xtxysy \approx xtyxsy$.
Thus, $\mathtt{atba^{+}sb^{+}}$ is not a $\lambda$-term for $\mathbb I$. Corollary~\ref{C: MW2}  implies that
the variety $\mathbb I$ does not contain $M_{\lambda}(\mathtt{atba^{+}sb^{+}})$. 
Since $\mathbb I$ is a unique maximal subvariety $\mathbb J$, the variety $\mathbb J$ is generated by $M_{\lambda}(\mathtt{atba^{+}sb^{+}})$.
\end{proof}

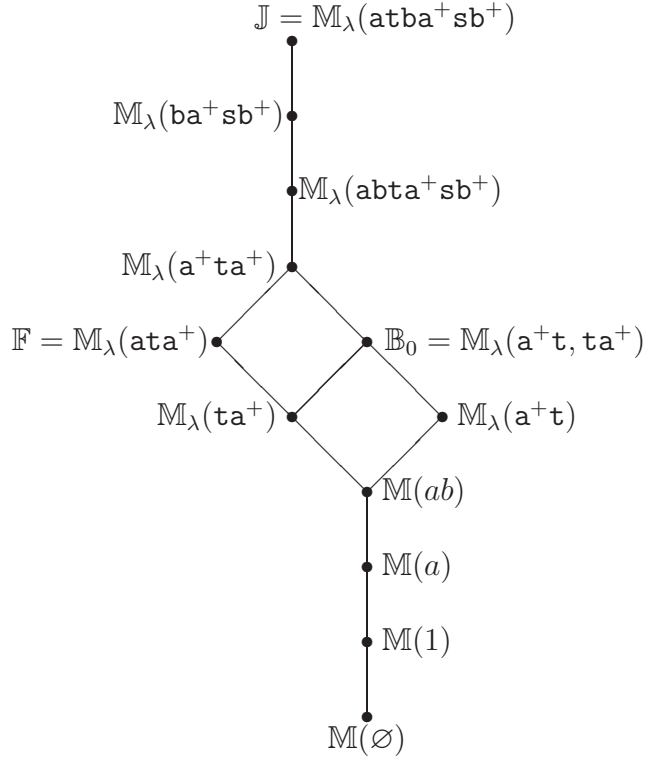
\begin{figure}[htb]
\unitlength=1mm
\linethickness{0.4pt}
\begin{center}
\begin{picture}(55,100)
\put(35,5){\circle*{1.33}}
\put(35,15){\circle*{1.33}}
\put(35,25){\circle*{1.33}}
\put(35,35){\circle*{1.33}}
\put(45,45){\circle*{1.33}}
\put(25,45){\circle*{1.33}}
\put(35,55){\circle*{1.33}}
\put(15,55){\circle*{1.33}}
\put(25,65){\circle*{1.33}}
\put(25,75){\circle*{1.33}}
\put(25,85){\circle*{1.33}}
\put(25,95){\circle*{1.33}}

\put(35,5){\line(0,1){30}}
\put(35,35){\line(-1,1){20}}
\put(35,35){\line(1,1){10}}
\put(25,45){\line(1,1){10}}
\put(25,45){\line(1,1){10}}
\put(15,55){\line(1,1){10}}
\put(45,45){\line(-1,1){20}}
\put(25,65){\line(0,1){30}}


\put(35,2){\makebox(0,0)[cc]{${\mathbb M}(\varnothing)$}}
\put(37,15){\makebox(0,0)[lc]{${\mathbb M}(1)$}}
\put(37,25){\makebox(0,0)[lc]{${\mathbb M}(a)$}}
\put(37,35){\makebox(0,0)[lc]{${\mathbb M}(ab)$}}
\put(23,45){\makebox(0,0)[rc]{${\mathbb M}_\lambda(\mathtt{ta^+})$}}
\put(14,55){\makebox(0,0)[rc]{$\mathbb F = {\mathbb M}_\lambda(\mathtt{ata^+})$}}
\put(72,55){\makebox(0,0)[rc]{$\mathbb B_0 = {\mathbb M}_\lambda(\mathtt{a^+t, ta^+})$}}
\put(23,65){\makebox(0,0)[rc]{${\mathbb M}_\lambda(\mathtt{a^+ta^+})$}}
\put(53,75){\makebox(0,0)[rc]{${\mathbb M}_\lambda(\mathtt{abta^+sb^+})$}}
\put(24,85){\makebox(0,0)[rc]{${\mathbb M}_\lambda(\mathtt{ba^+s b^+})$}}
\put(47,45){\makebox(0,0)[lc]{${\mathbb M_\lambda(\mathtt{a^+t})}$}}
\put(20,98){\makebox(0,0)[lc]{${\mathbb J} = {\mathbb M}_\lambda(\mathtt{atba^+sb^+})$}}
\end{picture}
\end{center}
\caption{The subvariety lattice of $\mathbb J$ (cf. Figure~1 in \cite{SG})}
\label{pic: J}
\end{figure}

\begin{remark} \label{B0}  The monoids in Theorem \ref{main} are not the minimal generating monoids for their varieties.

(i) The monoid $F^1$ is isomorphic to the submonoid $\{1, \mathtt{a, a^+, ta^+, ata^+}, 0\}$ of $M_\lambda(\mathtt{ata^+})$
by $b \rightarrow \mathtt{a}, c \rightarrow \mathtt{ta^+}$.

(ii)   Gusev noticed that  $M_{\lambda}(\mathtt{abta^{+}sb^{+}})$ contains a proper  submoid $M_1$ such that 
${\mathbb H} = \var M_1$. The monoid $M_1$ is generated by $\{\mathtt{a,b,ta^+, a^+sb^+}\}$ and has 14 elements.

(iii) The computer program Prover9 gives the multiplication table of an 8-element monoid which generates the variety $\mathbb I$
(Lee, July 2019). This multiplication table corresponds to
the submonoid of $M_{\lambda}(\mathtt{ba^{+}sb^{+}})$ which consists of
\[\{1, \mathtt{ba^{+}, a^+, a^+sb^+, ba^+sb^+, b, b^+},  0 \}.\]

(iv)  Gusev noticed that  $M_{\lambda}(\mathtt{atba^{+}sb^{+}})$ contains a proper  submoid $M_2$ such that 
${\mathbb J} = \var M_2$. The monoid $M_2$ is generated by $\{\mathtt{a,b,t, a^+sb^+}\}$ and has 19 elements.

\end{remark}

\section{Representations of the monoids $\mathfrak A^*/ \tau_1$, $\mathfrak A^*/ \gamma$, $\mathfrak A^*/ \lambda$ and $\mathfrak A^*/ \rho$}\label{sec:tau}

Let  $\tau$ be a congruence on $\mathfrak A^*$ and $\mathtt W \subseteq \mathfrak A^*/\tau$ be a set of $\tau$-words.
In view of \eqref{mult in MW}, the elements of  $M_\tau(\mathtt W)$ are computed in terms of the quasi-order $\le_\tau$ on
$\mathfrak A^*/\tau$ and
the multiplication in $M_\tau(\mathtt W)$ is  in terms of the operation $\circ_\tau$ in  $\mathfrak A^*/\tau$.

In this section,  for each  $\tau \in \{\tau_1, \gamma, \lambda, \rho\}$, we provide an algorithm for calculating the relation $\le_\tau$
and the  operation $\circ_\tau$ in  $\mathfrak A^*/\tau$. In particular, we show that in this case, the relation  $\le_\tau$ is an order on
$\mathfrak A^*/\tau$. Therefore,  $\mathfrak A^*/\tau$ is a $J$-trivial monoid for  each  $\tau \in \{\tau_1, \gamma, \lambda, \rho\}$.

As we mentioned in the introduction, each element of $\mathfrak A^*/\tau$ can be represented in multiple ways as a word in the alphabet 
$\{\mathtt{a^{1+}} \mid a \in \mathfrak A\}$ if $\tau=\tau_1$ or as a word in the alphabet $\{\mathtt{a}, \mathtt{a^{2+}}  \mid a \in \mathfrak A\}$ if $\tau \in \{\gamma, \lambda, \rho\}$. In this section, we show that each $\tau$-class can be represented by a unique (`canonical")  word
in  this alphabet.

To this aim,  we fix a formal alphabet $\mathfrak B$  which consists of symbols $a^+$ for each $a \in \mathfrak A$ and
for each $a \in \mathfrak A$ consider the following rewriting rules on $(\mathfrak A \cup \mathfrak B)^+$:
\begin{itemize}
\item $R^{\tau_1}_{a\rightarrow a^+}$ replaces an occurrence of $a$ in $\bf u$ by $a^+$;
\item  $R^\gamma_{a\rightarrow a^+}$ replaces an occurrence of $a$ in $\bf u$ by $a^+$ only in case if $\bf u$ contains either another occurrence of $a$ or an occurrence of $a^+$;

\item  $R^\lambda_{a\rightarrow a^+}$ replaces an occurrence of $a$ in $\mathbf u$ by $a^+$ only in case if either $a$ or $a^+$ appears in $\bf u$ to the left of this occurrence of $a$;

\item  $R^\rho_{a\rightarrow a^+}$ replaces an occurrence of $a$ in $\bf u$ by $a^+$ only in case if either $a$ or $a^+$ appears in $\bf u$ to the right of this occurrence of $a$;

\item  $R_{a^+a^+\rightarrow a^+}$ replaces $a^+a^+$ by $a^+$;

\item  $R_{aa^+\rightarrow a^+}$ replaces $aa^+$ by $a^+$.

\item  $R_{a^+a\rightarrow a^+}$ replaces $a^+a$ by $a^+$.
\end{itemize}

\begin{lemma} \label{reduced} 

If  $\tau \in \{\tau_1, \gamma, \lambda, \rho\}$ then  using the rules 
\begin{equation} \label{rs} R_\tau = \{R^\tau_{a\rightarrow a^+},  R_{a^+a^+\rightarrow a^+}
R_{aa^+\rightarrow a^+}, R_{aa^+\rightarrow a^+}\mid a \in \mathfrak A\}\end{equation}
 in any order,  every word
 ${\bf u} \in (\mathfrak A \cup \mathfrak B)^*$
can be transformed to a unique word $r_\tau({\bf u})$ such that none of these rules is applicable to  $r_\tau({\bf u})$.

\end{lemma}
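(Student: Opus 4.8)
The statement to prove is a confluence-and-termination result for the rewriting system $R_\tau$: every word in $(\mathfrak A\cup\mathfrak B)^*$ reduces to a unique normal form $r_\tau(\mathbf u)$. The plan is to invoke Newman's Lemma: a terminating (strongly normalizing) rewriting system is confluent if and only if it is locally confluent, and local confluence reduces to checking that all critical pairs converge.

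\medskip

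\noindent\textbf{Termination.} First I would show $R_\tau$ is terminating. The rule $R^\tau_{a\to a^+}$ strictly decreases the number of occurrences of symbols from $\mathfrak A$ (i.e.\ unbarred letters); the three rules $R_{a^+a^+\to a^+}$, $R_{aa^+\to a^+}$, $R_{a^+a\to a^+}$ strictly decrease total word length and do not increase the count of unbarred letters. So the pair (number of unbarred-letter occurrences, total length), ordered lexicographically, strictly decreases under every rule. Hence no infinite reduction sequence exists, and each word has at least one normal form. (Termination is needed both to apply Newman's Lemma and to make ``transformed in any order to $r_\tau(\mathbf u)$'' meaningful.)

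\medskip

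\noindent\textbf{Local confluence via critical pairs.} The heart is to enumerate overlaps between left-hand sides and show each resulting critical pair joins. The left-hand sides are: a single occurrence of $a$ (for $R^\tau_{a\to a^+}$, but with a side condition depending on context), and the two-letter patterns $a^+a^+$, $aa^+$, $a^+a$. Overlaps to examine:
\begin{itemize}
\item $a^+a^+a^+$ (two ways to apply $R_{a^+a^+\to a^+}$): both give $a^+a^+\to a^+$, converging.
\item $a^+aa^+$: applying $R_{a^+a\to a^+}$ gives $a^+a^+\to a^+$; applying $R_{aa^+\to a^+}$ gives $a^+a^+\to a^+$; converge.
\item $aa^+a^+$, $a^+a^+a$, $aaa^+$ (via $R^\tau_{a\to a^+}$ producing $a^+a^+\,$ or $\,aa^+$), etc.: in each case both routes reach $a^+$ or $a^+a^+\to a^+$.
\item Overlaps of $R^\tau_{a\to a^+}$ (applied at an occurrence of $a$) with a length-reducing rule whose left side also contains that occurrence of $a$, e.g.\ in $aa^+$ one may rewrite the leading $a$ to $a^+$ (if the side condition permits) getting $a^+a^+\to a^+$, or apply $R_{aa^+\to a^+}$ getting $a^+$; converge. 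Similarly for $a^+a$.
\item Two applications of $R^\tau_{a\to a^+}$ at distinct occurrences of the same letter $a$: these commute, and one must check the side conditions (``$\bf u$ contains another $a$ or $a^+$'' for $\gamma$; ``$a$ or $a^+$ to the left/right'' for $\lambda,\rho$) are preserved after the other rewrite — rewriting an $a$ to $a^+$ only ever helps these conditions stay true, so the diamond closes.
\end{itemize}
The nontrivial bookkeeping, and the step I expect to be the main obstacle, is precisely this last point: the rule $R^\tau_{a\to a^+}$ is \emph{conditional}, so ``critical pair'' analysis must be carried out with care — one has to verify that whenever a redex for $R^\tau_{a\to a^+}$ is present and another rule is applied elsewhere (possibly consuming the very letter that witnessed the side condition, e.g.\ the left neighbor in the $\lambda$ case), the redex either remains available with its condition still satisfied or has already been subsumed. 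I would handle $\tau_1$ first (unconditional, cleanest), then treat $\gamma$, $\lambda$, $\rho$ by observing that replacing any $a$ by $a^+$ or deleting a duplicated $a/a^+$ never destroys the existence of ``another $a$ or $a^+$'' witnessing a surviving unbarred $a$'s condition, once one notes that a length-reducing rule only fires when two adjacent copies of the $a$-block are present, leaving at least one behind.

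\medskip

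\noindent Once termination and local confluence are established, Newman's Lemma yields global confluence, hence uniqueness of normal forms; define $r_\tau(\mathbf u)$ to be that normal form. This proves the statement.
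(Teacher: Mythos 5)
Your proposal is correct and follows essentially the same route as the paper: the paper's one\-/sentence proof likewise rests on the observation that the unconditional rules are a standard confluent presentation on a free monoid and that the side conditions of $R^\gamma_{a\rightarrow a^+}$, $R^\lambda_{a\rightarrow a^+}$, $R^\rho_{a\rightarrow a^+}$ are invariant under applications of the other rules, and concludes confluence from this. You make explicit the termination measure, Newman's Lemma, and the critical-pair analysis, all of which the paper leaves implicit, so your write-up is the more complete of the two.
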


\begin{proof} Notice that the rules $R^{\tau_1}_{a\rightarrow a^+}$, $R_{a^+a^+\rightarrow a^+}$,  $R_{aa^+\rightarrow a^+}$  and $R_{a^+a \rightarrow a^+}$ are traditional relations on a free monoid while the rules $R^\gamma_{a\rightarrow a^+}$,
$R^\lambda_{a\rightarrow a^+}$ and $R^\rho_{a\rightarrow a^+}$
require to check  certain property of a word $\bf u$ before being applied to $\bf u$. But since an application of any of these rules
does not change this property,  the  rewriting system $\langle (\mathfrak A \cup \mathfrak B)^* \mid R_\tau \rangle$
 is confluent for each  $\tau \in \{\tau_1, \gamma, \lambda, \rho\}$.
\end{proof}

For each  $\tau \in \{\tau_1, \gamma, \lambda, \rho\}$ let $\mathcal R_{\tau}$ denote the set of all words in $(\mathfrak A \cup \mathfrak B)^*$ to which none of the rewriting rules \eqref{rs} is applicable, that is,
${\mathbf u} \in \mathcal R_\tau$ if and only if ${\mathbf u}=r_\tau({\mathbf u})$.
For each  pair of words ${\mathbf u}, {\mathbf v} \in    \mathcal R_\tau$ define:
$${\mathbf u}  {\diamond{_\tau}} {\mathbf v} = r_\tau (\mathbf {uv}),$$ 
where $\mathbf {uv}$ is the result of concatenation
of  ${\mathbf u}$ and  ${\mathbf v}$ in  $(\mathfrak A \cup \mathfrak B)^*$. For example:

   $(a^+ c a^+), (ab^+ t c) \in \mathcal R_ \gamma$  and  $(a^+ c a^+)  { \diamond{_\gamma}} (ab^+ t c) = a^+c^+a^+b^+ t c^+  \in \mathcal R_ \gamma$;

  $(a c a^+), (ab^+ t c) \in \mathcal R_ \lambda$  and  $(a c a^+)  { \diamond{_\lambda}} (ab^+ t c) = aca^+b^+ t c^+  \in \mathcal R_ \lambda$.

Consider the map

  $r_\tau:  (\mathfrak A \cup \mathfrak B)^* \rightarrow  \mathcal R_\tau$ given by ${\mathbf u} \rightarrow r_\tau({\mathbf u})$.

Lemma \ref{reduced} implies that 
the map $r_\tau$ is a homomorphism from  $(\mathfrak A \cup \mathfrak B)^*$ to the monoid $\langle \mathcal R_\tau,  {\diamond{_\tau}} \rangle$. We are interested only in the restriction of  $r_\tau$ to the free monoid $\mathfrak A^*$.

 \begin{prop}  \label{congruence}  Let $\tau_1, \gamma, \lambda$ and $\rho$ be the equivalence relations on  $\mathfrak A^*$ defined in the introduction. Then  for each  $\tau \in \{\tau_1, \gamma, \lambda, \rho\}$:

(i) $\tau$ is the kernel of the  homomorphism

$r_\tau$: $\mathfrak A ^*  \rightarrow  \mathcal R_\tau$ given by ${\mathbf u} \rightarrow r_\tau({\mathbf u})$;

(ii) the homomorphism $r_\tau$: $\mathfrak A ^*  \rightarrow  \mathcal R_\tau$ induces the isomorphism $r_\tau$ 
from $\langle  \mathfrak A^*/ \tau,  {\circ{_\tau}} \rangle$ onto  $\langle \mathcal R_\tau,  {\diamond{_\tau}} \rangle$
defined by  $r_\tau({\mathtt u}) = r_\tau({\bf u})$ for each    ${\mathtt u} \in  \mathfrak A^*/ \tau$ and   ${\bf u} \in {\mathtt u}$.

\end{prop}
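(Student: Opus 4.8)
The plan is to prove parts (i) and (ii) simultaneously, since (ii) is essentially a repackaging of (i) via the first isomorphism theorem, together with the observation that $r_\tau$ is a homomorphism (already noted after Lemma~\ref{reduced}). The heart of the matter is establishing that the kernel of $r_\tau\colon\mathfrak A^*\to\mathcal R_\tau$ is exactly $\tau$. I would split this into the two inclusions.

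For the inclusion $\tau\subseteq\ker r_\tau$ (``$\tau$-related words have the same normal form''), it suffices to check that the defining relations of $\tau$ are respected by $r_\tau$; since $\tau$ is the congruence generated by those relations and $\ker r_\tau$ is a congruence, this gives containment. Concretely, for $\tau=\tau_1$ one checks that $r_{\tau_1}(a^2)=a^+=r_{\tau_1}(a)$ — wait, more carefully: the defining relations of $\tau_1$ are $a\approx a^2$ on $\mathfrak A^*$, so I must verify $r_{\tau_1}(uav)=r_{\tau_1}(uaav)$ for all $u,v$; this follows because after applying $R^{\tau_1}_{a\to a^+}$ to all three occurrences and using $R_{a^+a^+\to a^+}$ we collapse both sides to the same word. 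For $\tau\in\{\gamma,\lambda,\rho\}$ the defining relations are $a^2\approx a^3$ (equivalently, idempotency of $\mathtt a^{2+}$), not $a\approx a^2$, and I must check the analogous collapse; the guarded rules $R^\tau_{a\to a^+}$ are precisely designed so that in a context where $a$ already occurs twice (resp.\ has a neighbor on the left/right), the extra square can be absorbed. This step is a finite, if slightly tedious, case analysis on which rewriting rules fire and in which order; confluence from Lemma~\ref{reduced} lets me ignore the order.

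The reverse inclusion $\ker r_\tau\subseteq\tau$ is where the real content lies, and I expect it to be \emph{the main obstacle}. The cleanest route is to show that each single rewriting step $\bf u\to\bf u'$ in $R_\tau$, when restricted to words arising from $\mathfrak A^*$ (i.e.\ where a symbol $a^+$ is understood as standing for the $\tau$-class $\mathtt a^{2+}$ for $\tau\in\{\gamma,\lambda,\rho\}$, or $\mathtt a^{1+}$ for $\tau_1$), does not change the $\tau$-class of the word it represents. Formally: define a map $\mathcal R_\tau\to\mathfrak A^*/\tau$ sending a reduced word to the $\tau$-class obtained by replacing each $a^+$ with (say) $a^2$ and each bare $a$ with $a$, then reading the result in $\mathfrak A^*$ and passing to the quotient; show this is well-defined and two-sided inverse to the induced map $\mathfrak A^*/\tau\to\mathcal R_\tau$. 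The subtle point is that for $\lambda$ and $\rho$ the guarded rules encode adjacency information about \emph{first two} (resp.\ \emph{last two}) occurrences, so I must argue that the normal form faithfully records, for each multiple letter, (a) whether it is multiple, (b) for $\lambda$: whether its first two occurrences are adjacent, (c) for $\rho$: whether its last two occurrences are adjacent — and that two words with the same such data (plus the same ``skeleton'' of simple letters and island-pattern) are $\tau$-related. This is really the combinatorial core: I would prove that $r_\tau$ is injective on $\tau$-classes by showing that from $r_\tau(\bf u)$ one can reconstruct $\con(\bf u)$, $\mul(\bf u)$, and the relevant adjacency predicate, which by definition of $\tau$ determines the $\tau$-class.

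Having both inclusions, part (i) is done, and for part (ii) I note that $r_\tau$ is surjective (every reduced word is a value, being its own normal form and the image of any $\mathfrak A^*$-word obtained by de-abbreviating its $a^+$'s) and a homomorphism onto $\langle\mathcal R_\tau,\diamond_\tau\rangle$; the first isomorphism theorem then yields the stated isomorphism $\mathfrak A^*/\tau\cong\mathcal R_\tau$, with the explicit formula $r_\tau(\mathtt u)=r_\tau(\bf u)$ being well-defined precisely because $\ker r_\tau=\tau$. One small thing to verify for $\lambda,\rho$: that $\diamond_\tau$ really is associative and that concatenation followed by $r_\tau$ agrees with $\circ_\tau$ on classes — but both are immediate from $r_\tau$ being a monoid homomorphism $(\mathfrak A\cup\mathfrak B)^*\to\langle\mathcal R_\tau,\diamond_\tau\rangle$ (Lemma~\ref{reduced}) restricted to $\mathfrak A^*$, composed with the quotient.
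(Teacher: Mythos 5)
Your overall architecture is sound and is in fact far more detailed than the paper's own argument: the paper disposes of (i) with the single assertion that Lemma~\ref{reduced} implies ${\bf u}\,\tau\,{\bf v}$ if and only if $r_\tau({\bf u})=r_\tau({\bf v})$, and declares (ii) immediate from (i). Your treatment of the reverse inclusion (each rewriting step preserves the represented $\tau$-class, and the normal form records $\con$, $\mul$, the island skeleton and the adjacency predicate, which together determine the $\tau$-class) and your derivation of (ii) from surjectivity plus the homomorphism property are correct.

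There is, however, a genuine error in your forward inclusion $\tau\subseteq\ker r_\tau$ for $\tau\in\{\gamma,\lambda,\rho\}$. These three relations are \emph{not} the congruences generated by $a^2\approx a^3$; they are defined combinatorially in the introduction and are strictly coarser than that congruence. For instance, $aba\;\gamma\;a^2ba$ (the words are $\tau_1$-related and $a$ is multiple in both), and $axa\;\lambda\;axa^2$, yet $aba$ and $axa$ contain no subword $a^2$ or $a^3$, so each forms a singleton class modulo the congruence generated by $a^2\approx a^3$. Hence verifying that $r_\tau$ respects $a^2\approx a^3$ only yields the containment of that smaller congruence in $\ker r_\tau$, which is strictly weaker than $\tau\subseteq\ker r_\tau$; the exponent-one islands of multiple letters, which $\gamma$, $\lambda$ and $\rho$ are allowed to inflate, are exactly what this route misses. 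The fix is cheap and already implicit in your reverse-inclusion plan: describe $r_\tau({\bf u})$ explicitly as a function of the $\tau$-invariants of ${\bf u}$ alone (for $\gamma$, every island of a multiple letter collapses to $a^+$ and every simple letter survives as $a$; for $\lambda$, the first island of each letter survives as $a$ or $a^+$ according to whether its exponent is $1$ or at least $2$, while every later island collapses to $a^+$; dually for $\rho$). With that description in hand, both inclusions of (i) follow at once, and the rest of your argument for (ii) goes through unchanged.
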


\begin{proof} (i)  If  ${\bf u}, {\bf v} \in {\mathfrak A}^*$ 
then Lemma~\ref{reduced} implies that ${\bf u} \tau {\bf v}$ if and only if $r_{\tau}({\bf u}) = r_{\tau}({\bf v})$.

Part (ii) immediately follows from Part (i).
\end{proof}

 Since  $\mathcal R_{\tau_1} \subset  {\mathfrak B}^*$,  we have ${\mathtt v} \le_{\tau_1} {\mathtt u}$ if and only if   $r_{\tau_1}({\mathtt v})$ is a subword of $r_{\tau_1}({\mathtt u})$. Hence for each ${\mathtt u} \in \mathfrak A/{\tau_1}$ we have 
\[r_{\tau_1} (\{{\mathtt u}\}^{\le_{\tau_1}}) = \{ {\mathbf v} \in   {\mathfrak B}^* \mid {\mathbf v} \le r_{\tau_1}({\mathtt u})\}.\]
 In particular, $\{{\mathtt u}\}^{\le_{\tau_1}}$ is a finite set. For example, 
\[r_{\tau_1} (\{\mathtt{a^{1+} \circ_{\tau_1} b^{1+}}\}^{\le_{\tau_1}}) = r_{\tau_1} (\{a^kb^n \mid k, n \ge 0 \}) =  \{ 1, a^+, b^+, a^+b^+ \}.\]
Using Lemma \ref{subword}, one can compute that 
\[r_{\gamma} (\{\mathtt{a \circ_\gamma b^{2+}}\}^{\le_{\gamma}}) = \{ 1, a, b, b^+, ab, ab^+ \}.\]
Notice that the set $r_{\gamma} (\{\mathtt{a \circ_\gamma b^{2+}}\}^{\le_{\gamma}})$ is larger than the set of all subwords of  $r_{\gamma} (\mathtt{a \circ_\gamma b^+}) = r_{\gamma}(\{ab^n \mid k\ge 2\}) = ab^+$.
However, the set $r_{\gamma} (\{\mathtt{a \circ_\gamma b^{2+}}\}^{\le_{\gamma}})$ is finite because for each $k\ge 2$, we have
$r_\gamma(b^2) = r_\gamma(b^k)$  and $r_\gamma(ab^2) = r_\gamma(ab^k)$.

In order to show that for each $\tau \in \{\gamma, \lambda, \rho\}$ and every ${\mathtt u} \in \mathfrak A^*/\tau$ the set
$\{{\mathtt u}\}^{\le_{\tau}}$ is always finite, we  use $K_\tau({\mathtt u})$ to denote the (finite) subset of 
$\{{\bf u} \in  \mathfrak A^* \mid   {\bf u} \in {\mathtt u} \}$
 membership to which is determined as follows:

$\bullet$ Given ${\mathbf  u} \in {\mathtt u}$, the word $\mathbf u$ belongs to $K({\mathtt u})$ if and only if ${\mathbf  u}$ does not have any subwords of the form $x^3$ for any $x \in \mathfrak A$.

 For example, $K_\gamma(\mathtt{a^{2+} \circ_\gamma b  \circ_\gamma a^{2+}}) = \{ aba, a^2ba^2, a^2ba, a^2ba \}$, \\ $K_\lambda(\mathtt{a \circ_\gamma b^{2+}  \circ_\gamma a^{2+}}) = \{ ab^2a, ab^2a^2 \}$.

The following lemma gives us a simple algorithm for computing the set $\{{\mathtt u}\}^{\le_{\tau}}$.

\begin{lemma}  \label{tausubwords}  Let $\tau \in \{\gamma, \lambda, \rho\}$ and  ${\mathtt u} \in \mathfrak A^*/\tau$. Then
\[ r_\tau(\{{\mathtt u}\}^{\le_{\tau}}) = \{ r_{\tau}({\mathbf v}) \mid  {\mathbf v}\le {\mathbf u}, {\mathbf u} \in K({\mathtt u})\}.\]

In particular, the set $\{{\mathtt u}\}^{\le_\tau}$ is finite.
\end{lemma}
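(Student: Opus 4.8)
The plan is to prove both inclusions of the claimed equality, and then deduce finiteness as an immediate corollary. The inclusion
\[
\{ r_{\tau}({\mathbf v}) \mid {\mathbf v}\le {\mathbf u},\ {\mathbf u} \in K({\mathtt u})\} \subseteq r_\tau(\{{\mathtt u}\}^{\le_{\tau}})
\]
is the easy direction: if ${\mathbf u}\in K({\mathtt u})$ then ${\mathbf u}\in{\mathtt u}$, so $H_\tau({\mathbf u})={\mathtt u}$; and if ${\mathbf v}\le{\mathbf u}$ then by Lemma~\ref{subword} the $\tau$-word $H_\tau({\mathbf v})$ satisfies $H_\tau({\mathbf v})\le_\tau{\mathtt u}$, hence lies in $\{{\mathtt u}\}^{\le_\tau}$. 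Applying $r_\tau$ and using that $r_\tau({\mathbf v})=r_\tau(H_\tau({\mathbf v}))$ by Proposition~\ref{congruence}(ii) gives the inclusion.

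For the reverse inclusion, I would start with an arbitrary ${\mathtt w}\in\{{\mathtt u}\}^{\le_\tau}$, so ${\mathtt w}\le_\tau{\mathtt u}$. By Lemma~\ref{subword}(ii), every word in ${\mathtt w}$ is a subword of some word in ${\mathtt u}$; fix ${\mathbf w}\in{\mathtt w}$ and a word ${\mathbf u}'\in{\mathtt u}$ with ${\mathbf w}\le{\mathbf u}'$. The point is that ${\mathbf u}'$ may have islands with large exponents, whereas words in $K({\mathtt u})$ have all island-exponents equal to $1$ or $2$. The key step is the following island-exponent reduction: because $\tau\in\{\gamma,\lambda,\rho\}$ all partition $\tau_1$, the $\tau$-class ${\mathtt u}$ is determined by its letter-multiplicities together with (for $\lambda$ or $\rho$) the adjacency pattern of the relevant occurrences, and in particular replacing, in ${\mathbf u}'$, every island $x^p$ with $p\ge 3$ by $x^2$ yields another word ${\mathbf u}''$ still in ${\mathtt u}$ — indeed ${\mathbf u}''\in K({\mathtt u})$. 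Under the same substitution applied to the subword ${\mathbf w}$ of ${\mathbf u}'$ (reducing each island of ${\mathbf w}$ to exponent $\min(p,2)$), one obtains a subword ${\mathbf v}$ of ${\mathbf u}''$; and crucially the rewriting rules in $R_\tau$ collapse $a^+a^+$, $aa^+$, $a^+a$ all to $a^+$, so $r_\tau$ does not distinguish island-exponents once they are $\ge 1$ in the appropriate positions — hence $r_\tau({\mathbf v})=r_\tau({\mathbf w})$. Thus $r_\tau({\mathbf w})\in\{r_\tau({\mathbf v})\mid {\mathbf v}\le{\mathbf u},\ {\mathbf u}\in K({\mathtt u})\}$, as desired.

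The finiteness statement then follows at once: $K({\mathtt u})$ is finite (each of its words is obtained by independently choosing, for each letter of ${\mathtt u}$, island-exponents in $\{1,2\}$, subject to the fixed number of islands), and each ${\mathbf u}\in K({\mathtt u})$ has only finitely many subwords, so the right-hand set is finite; by the established equality so is $r_\tau(\{{\mathtt u}\}^{\le_\tau})$, and since $r_\tau$ is injective on $\mathfrak A^*/\tau$ (Proposition~\ref{congruence}(ii)) the set $\{{\mathtt u}\}^{\le_\tau}$ is finite.

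The main obstacle is the island-exponent reduction step and, more precisely, checking that it is compatible with the subword relation \emph{and} with $r_\tau$ \emph{simultaneously} for all three congruences $\gamma,\lambda,\rho$. For $\gamma$ this is transparent since $\gamma$ only remembers letter-multiplicities. For $\lambda$ (and dually $\rho$) one must verify that reducing an island $x^p$ with $p\ge 2$ down to $x^2$ inside ${\mathbf u}'$ does not change which consecutive occurrences of a letter are adjacent — which is clear because adjacency of $_{i}x$ and $_{(i+1)}x$ is unaffected by shrinking an island to length $\ge 1$ — and that when ${\mathbf w}\le{\mathbf u}'$ one can choose the reduction on ${\mathbf w}$ induced by the reduction on ${\mathbf u}'$ (reduce the occurrence-block of ${\mathbf w}$ lying inside a given island of ${\mathbf u}'$ to the same truncated length), so that ${\mathbf v}\le{\mathbf u}''$ genuinely holds. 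Once those bookkeeping points are handled, the equality of the $r_\tau$-images is just the confluence and the shape of the collapsing rules from Lemma~\ref{reduced}.
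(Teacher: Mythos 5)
Your proof is correct and follows essentially the same route as the paper: reduce via Lemma~\ref{subword} to subwords of arbitrary representatives of $\mathtt u$, then truncate island exponents $\ge 3$ down to $2$ to land in $K({\mathtt u})$ without changing $r_\tau$-images. The only difference is that you spell out the bookkeeping (compatibility of the truncation with the subword relation and with each of $\gamma,\lambda,\rho$) that the paper's one-line reduction "$x^3=x^2$" leaves implicit.
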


\begin{proof} In view of  Lemma \ref{subword}, we have
 \[ r_\tau(\{{\mathtt u}\}^{\le_{\tau}}) = \{ r_{\tau}({\mathbf v}) \mid  {\mathbf v}\le {\mathbf u}, {\mathbf u} \in {\mathtt u}\}.\]
Take ${\mathbf u} \in  {\mathtt u}$. Let ${\mathbf u}'$ denote the word obtained from ${\mathbf u}$ by applying the relation $x^3 = x^2$ for each $x \in \con({\mathbf u})$ from left to right whenever it is applicable. For example, if ${\mathbf u} = x^4yxy^5$ then ${\mathbf u}' = x^2yxy^2$.  Since $r_\tau({\mathbf u}) = r_\tau({\mathbf u}')$,  in order to find all the elements in $\{{\mathtt u}\}^{\le_{\tau}}$ it is sufficient to look only at  subwords of 
words in  $K({\mathtt u})$. Since $K({\mathtt u})$ is finite, the set $\{{\mathtt u}\}^{\le_\tau}$ is also finite and can be effectively computed.
\end{proof}

The next example lists all non-zero elements of the 31-element monoid \\ $M_\lambda(\{\mathtt{atba^{+}sb^{+}}\})$ which generates the limit variety $\mathbb J$ (see Theorem \ref{main}).

\begin{ex} \label{E: J}
\[  r_\tau(\{ \mathtt{atba^{+}sb^{+}}\}^{\le_\lambda}) = \{1,  a, a^+, b, b^+, t, s,\]
\[ at, tb, ba, ba^+, as, a^+s, sb, sb^+,\]
\[ atb, tba, tba^+,  asb, a^+sb, a^+sb^+,\]
\[atba^+, tbas, tba^+s, basb^+, ba^+sb^+,\]
\[atba^+s, tbasb^+, tba^+sb^+, atba^{+}sb^{+} \}\]
\end{ex}

\begin{prop} \label{P: Jtriv} If $\tau \in \{\tau_1, \gamma, \lambda, \rho\}$ then:

(i)  ${\mathtt u}\in \mathfrak A^* / \tau$ is an idempotent of $\mathfrak A^* / \tau$ if and only if  $r_{\tau}({\mathtt u}) =1$  or  $r_{\tau}({\mathtt u}) = x^+$ for some $x \in \mathfrak A$;

(ii)  the relation $\le_\tau$ is an order on $\mathfrak A^* / \tau$, thus $\mathfrak A^* / \tau$  is a $J$-trivial monoid;

(iii) if  ${\mathtt W} \subset \mathfrak A^* / \tau$ is  finite set of $\tau$-words then  $M_{\tau}({\mathtt W})$ is  finite $J$-trivial monoid.

\end{prop}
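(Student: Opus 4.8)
The plan is to prove the three parts in order, relying on the normal-form description of $\mathfrak A^*/\tau$ established in this section.

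\smallskip\noindent\textbf{Part (i).} By Proposition~\ref{congruence}(ii), $r_\tau$ is an isomorphism of $\langle\mathfrak A^*/\tau,\circ_\tau\rangle$ onto $\langle\mathcal R_\tau,\diamond_\tau\rangle$, so $\mathtt u\in\mathfrak A^*/\tau$ is idempotent iff the reduced word $\mathbf u:=r_\tau(\mathtt u)\in\mathcal R_\tau$ satisfies $r_\tau(\mathbf u\mathbf u)=\mathbf u$. The observation that drives everything is that no word of $\mathcal R_\tau$ contains two consecutive symbols with the same underlying letter of $\mathfrak A$: otherwise one of the merge rules $R_{a^+a^+\to a^+}$, $R_{aa^+\to a^+}$, $R_{a^+a\to a^+}$ would apply to it (after first applying $R^\tau_{a\to a^+}$ if both symbols are plain), contradicting reducedness. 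Hence in the concatenation $\mathbf u\mathbf u$ the only position at which any rule of $R_\tau$ can delete a symbol is the junction between the two copies, and a deletion occurs there precisely when the first and the last symbol of $\mathbf u$ carry the same letter; moreover the merged symbol then inherits neighbours carrying other letters, and the rules $R^\tau_{a\to a^+}$ never change an underlying letter, so at most one symbol is ever deleted. By the confluence in Lemma~\ref{reduced}, $|r_\tau(\mathbf u\mathbf u)|$ is therefore $2|\mathbf u|$ or $2|\mathbf u|-1$, so $r_\tau(\mathbf u\mathbf u)=\mathbf u$ forces $|\mathbf u|\le1$; and among words of length $\le1$ a plain symbol $a$ is not idempotent ($r_\tau(aa)=a^+\ne a$), whereas $r_\tau(a^+a^+)=a^+$ and $r_\tau(11)=1$. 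This is the asserted characterization.

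\smallskip\noindent\textbf{Part (ii).} Reflexivity of $\le_\tau$ holds via $\mathtt p=\mathtt s=\{1\}$ (here $\{1\}$ is the identity of $\mathfrak A^*/\tau$ because $1$ is a singleton $\tau$-class), and transitivity is clear by composing factorizations; since $\le_\tau$ is precisely the reverse of the $\mathcal J$-preorder of the monoid $\mathfrak A^*/\tau$, antisymmetry of $\le_\tau$ is equivalent to $\mathfrak A^*/\tau$ being $J$-trivial, and this is what remains. When $\tau=\tau_1$ it is immediate: $\mathcal R_{\tau_1}\subseteq\mathfrak B^*$ and, as recorded earlier in this section, $\mathtt v\le_{\tau_1}\mathtt u$ iff $r_{\tau_1}(\mathtt v)$ is a factor of $r_{\tau_1}(\mathtt u)$, and the factor order on words is a partial order. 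For $\tau\in\{\gamma,\lambda,\rho\}$ I would argue in two stages. First, since $\tau\subseteq\tau_1$, there is a surjective monoid homomorphism $\bar r_{\tau_1}\colon\mathfrak A^*/\tau\to\mathcal R_{\tau_1}$ with $\bar r_{\tau_1}(\mathtt u)=r_{\tau_1}(\mathbf u)$ for any $\mathbf u\in\mathtt u$; it is order-preserving, so $\mathtt v\le_\tau\mathtt u\le_\tau\mathtt v$ forces $\bar r_{\tau_1}(\mathtt u)=\bar r_{\tau_1}(\mathtt v)$, i.e.\ $\mathtt u$ and $\mathtt v$ are $\tau$-subclasses of one and the same $\tau_1$-class $C$, whose $\tau_1$-reduced form I write as $a_{i_1}^{+}\cdots a_{i_\ell}^{+}$ with consecutive letters distinct.

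\smallskip\noindent\textbf{Part (ii), continued.} Second, I would prove antisymmetry inside $C$. By Lemma~\ref{subword}, $\mathtt v\le_\tau\mathtt u$ means some word $\mathbf v\in\mathtt v$ is a factor of some word $\mathbf u\in\mathtt u$; since both words have exactly the $\ell$ islands of $C$, a counting of islands forces this occurrence to be the aligned one, so, writing $e_j(\cdot)$ for the exponent of the $j$-th island, one gets $e_j(\mathbf v)=e_j(\mathbf u)$ for $1<j<\ell$ and $e_1(\mathbf v)\le e_1(\mathbf u)$, $e_\ell(\mathbf v)\le e_\ell(\mathbf u)$ (with the obvious modification when $\ell=1$). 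On the other hand, within $C$ the $\tau$-class of a word is determined by the bit of data ``$e_j=1$ or $e_j\ge2$'' for each island $j$ that $\tau$ \emph{distinguishes}: namely the islands that are the only island of their letter, together with the first island of every letter having at least two islands when $\tau=\lambda$ (the last such island when $\tau=\rho$, none of these extra ones when $\tau=\gamma$), using that the first two occurrences of a multiple letter are adjacent iff its first island has exponent $\ge2$; crucially, which islands are distinguished depends only on $C$. Hence $\sigma(\mathtt w):=\#\{\text{distinguished islands }j:\ e_j\ge2\text{ in }\mathtt w\}$ is well defined, and the displayed inequalities give $\sigma(\mathtt v)\le\sigma(\mathtt u)$ whenever $\mathtt v\le_\tau\mathtt u$ inside $C$, with equality only when $\mathbf v$ and $\mathbf u$ agree on the status of every distinguished island, i.e.\ only when $\mathtt u=\mathtt v$. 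Thus $\mathtt v\le_\tau\mathtt u\le_\tau\mathtt v$ forces $\sigma(\mathtt u)=\sigma(\mathtt v)$ and then $\mathtt u=\mathtt v$, so $\le_\tau$ is a partial order and $\mathfrak A^*/\tau$ is $J$-trivial.

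\smallskip\noindent\textbf{Part (iii).} By \eqref{mult in MW}, $M_\tau(\mathtt W)=\mathtt W^{\le_\tau}\cup\{0\}$, and $\mathtt W^{\le_\tau}=\bigcup_{\mathtt u\in\mathtt W}\{\mathtt u\}^{\le_\tau}$ is a finite union, $\mathtt W$ being finite, of the finite sets $\{\mathtt u\}^{\le_\tau}$ --- finite by Lemma~\ref{tausubwords} when $\tau\in\{\gamma,\lambda,\rho\}$, and when $\tau=\tau_1$ because $r_{\tau_1}(\{\mathtt u\}^{\le_{\tau_1}})$ is the set of factors of $r_{\tau_1}(\mathtt u)$; so $M_\tau(\mathtt W)$ is finite. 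For $J$-triviality, $M_\tau(\mathtt W)$ is by Definition~\ref{D: MtauW1} a Rees quotient of $\mathfrak A^*/\tau$, which is $J$-trivial by part~(ii); and a Rees quotient of a $J$-trivial monoid by an ideal is again $J$-trivial, its $\mathcal J$-order being the old one restricted to the surviving elements together with a new least element $0$. The step I expect to be the main obstacle is the second stage of part~(ii) for $\tau\in\{\lambda,\rho\}$: making precise the ``aligned occurrence'' claim and, above all, the dictionary between $\tau$-subclasses of a $\tau_1$-class and admissible short/long island patterns, which requires carefully tracking how the adjacency conditions defining $\lambda$ and $\rho$ interact with the island structure; everything else is routine manipulation of the rewriting systems $R_\tau$.
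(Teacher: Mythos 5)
Your proposal is correct, and parts (i) and (iii) follow essentially the paper's own route: for (i) the paper simply observes that $r_\tau(\mathtt{u}^2)$ is longer than $r_\tau(\mathtt{u})$ whenever the reduced form has more than one symbol (your junction-merge analysis is a fleshed-out version of that one line), and for (iii) both you and the paper combine finiteness of $\{\mathtt u\}^{\le_\tau}$ from Lemma~\ref{tausubwords} with the fact that a Rees quotient of a $J$-trivial monoid is $J$-trivial. Where you genuinely diverge is the antisymmetry in part (ii): the paper stays inside the normal-form machinery, arguing via Lemma~\ref{tausubwords} that $\mathtt v\le_\tau\mathtt u$ forces $r_\tau(\mathtt v)$ to be a subword of $r_\tau(\mathtt u)$ after possibly replacing some plain symbols $a$ by $a^+$, whence mutual comparability forces equal lengths and then equality; you instead project onto $\mathcal R_{\tau_1}$ to trap $\mathtt u$ and $\mathtt v$ in one $\tau_1$-class and then run a monotone count over the ``distinguished islands'' that $\gamma$, $\lambda$, $\rho$ actually see. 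Both arguments work: the paper's buys brevity by reusing Lemma~\ref{tausubwords}, while yours is more self-contained at the level of the combinatorics of islands (and correctly identifies, and discharges, the two delicate points: the aligned-occurrence claim forced by equal island counts, and the fact that the set of distinguished islands depends only on the ambient $\tau_1$-class).
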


\begin{proof} (i) If $x \in \mathfrak A$ then $x^+  {\diamond{_\tau}} x^+ = r_\tau (x^+ x^+) = x^+$, $x  {\diamond{_\tau}} x = r_\tau (x^2) = x^+$.

If  $r_\tau ({\mathtt u}) \in \mathcal R_\tau$ contains more than one letter then the word $r_\tau ({\mathtt u})  {\diamond{_\tau}} r_\tau ({\mathtt u}) = r_\tau (\mathtt {u^2})$ is longer than  $r_\tau ({\mathtt u})$.

(ii)  If  ${\mathtt v} \le_{\tau} {\mathtt u}$  then  in view of  Lemma \ref{tausubwords},
either  $r_\tau({\mathtt v}) \in \mathcal R_\tau$ is a subword of $r_\tau({\mathtt u})  \in \mathcal R_\tau$ or $r_\tau({\mathtt v})$  becomes a subword of $r_\tau({\mathtt u})$ after some letters $\{a_1, \dots, a_k \mid k>0\} \subseteq \con(r_\tau({\mathtt v}))$ are replaced by  $a^+_1, \dots, a^+_k$.

This implies that if  ${\mathtt v} \le_{\tau} {\mathtt u}$ and  ${\mathtt u} \le_{\tau} {\mathtt v}$ then
${\mathtt v} = {\mathtt u}$.

(iii) Since $M_{\tau}({\mathtt W})$ is  a Rees quotient of  $J$-trivial monoid   $\mathfrak A^* / \tau$, it is also $J$-trivial.
If $\mathtt W$ is finite then ${\mathtt W} ^{\le_\tau}$ is also finite by Lemma \ref{tausubwords}.
\end{proof}

In contrast, here are some examples of non-J-trivial  monoids of the form $M_\tau(W)$.

\begin{ex}\label{E: C group}  Let $\vv$ be a commutative monoid  variety  containing the 2-element semilattice.  
Then $\vv = \mathbb M_\tau (\{a\}^*)$, where 
$\tau$ is either the trivial congruence or $\tau = \gamma_k \wedge \tau_m$ for some $k, m \ge 0$. 
\end{ex}

\begin{proof} It is well-known and easily verified that every variety of commutative monoids coincides with  $\mathbb C_{n,m} = \var \{x^n \approx x^{n+m}, xy \approx yx\}$ for some $n, m \ge 0$.  It is also well-known that  the variety of all commutative monoids $\mathbb C_{n,0}$ is generated by an infinite cyclic monoid $M (\{a\}^*)$. So, we may assume that $m \ge 1$. Since $\vv$ contains  the 2-element semilattice,
$n=k+1$ for some $k \ge 0$.

It is well-known that $\mathbb C_{k+1, 1}$ is generated by  $M(a^{k})= \{1, a, \dots, a^k, 0\}$.
 In view of Observation~\ref{O: gammak},  every word is a $\gamma_k$-term for $\mathbb M(a^k)$. 
Hence  $\mathbb M_{\gamma_k} (\mathfrak A^*)  \subseteq \mathbb M(a^k)  $ by Proposition~\ref{P: MW}. On the other hand,
$a^k$ is an isoterm for $\mathbb M_{\gamma_k} (\{a\}^*)$. (Otherwise,   $\mathbb M_{\gamma_k}(\{a\}^*) \models x^k \approx x^{k+p}$ for some $p>0$ which contradicts the fact that $x^k$ must be a $\gamma_k$-term for $\mathbb M_{\gamma_k} (\{a\}^*)$.  Hence $\mathbb M(a^k) \subseteq \mathbb M_{\gamma_k}  (\{a\}^*)$ by Proposition~\ref{P: MW}.
Since $\mathbb M_{\gamma_k} (\mathfrak A^*) \supseteq \mathbb M_{\gamma_k}  (\{a\}^*)$, we have $\mathbb M_{\gamma_k} (\mathfrak A^*) =\mathbb M(a^k) =\mathbb M_{\gamma_k}  (\{a\}^*)$.  Therefore
\begin{equation}\label{k1}  \mathbb C_{k+1, 1} = \mathbb M_{\gamma_k} (\{a\}^*) .\end{equation}
It is well-known that $\mathbb C_{1,m} = \mathbb {SL} \vee \mathbb A_m$, where $\mathbb {SL}$ is the variety of all commutative idempotent monoids and $\mathbb A_m$   is the variety of Abelian groups of exponent dividing $m$.
Notice that  $M_{\tau_m}(\{a\}^*)$  is isomorphic to the cyclic group of order $m$  with zero and unity elements adjoined. Therefore,   $\mathbb M_{\tau_m}(\{a\}^*)$ contains both  $\mathbb{SL}$ and $\mathbb A_m$.
On the other hand,  every word in $\{a\}^*$   is a   $\tau_m$-term for $\mathbb C_{1,m} = \mathbb{SL} \vee \mathbb A_m$.
Consequently,  
\begin{equation} \label{m1}
\mathbb C_{1,m} =  \mathbb M_{\tau_m}(\{a\}^*).
\end{equation}
by Proposition~\ref{P: MW}.
Finally,
\[\mathbb C_{k+1,m} = \mathbb C_{k+1,1}  \vee \mathbb A_m =\mathbb C_{k+1,1}  \vee  \mathbb{SL} \vee  \mathbb A_m =
\mathbb C_{k+1,1}  \vee  \mathbb C_{1,m} =\]
\[\stackrel{\eqref{k1}, \eqref{m1}}{=}  \mathbb M_{\gamma_k} (\{a\}^*) \vee  \mathbb M_{\tau_m}(\{a\}^*) \stackrel{\eqref{e: product2}}{=} \mathbb M_{\gamma_k \wedge \tau_m} (\{a\}^*).\] \end{proof}

\subsection*{Acknowledgement} The author thanks Sergey Gusev for reading the previous versions with many thoughtful comments and for finding $M_\gamma(W)$ representation for the monoid $A^1$.
The author is also grateful to Edmond Lee for helpful discussions.

\end{document}